\newcommand{\noun}[1]{\textsc{#1}}
\numberwithin{equation}{section}
\numberwithin{figure}{section}
\theoremstyle{plain}
\newtheorem{thm}{\protect\theoremname}
  \theoremstyle{definition}
  \newtheorem{defn}[thm]{\protect\definitionname}
  \theoremstyle{plain}
  \newtheorem{cor}[thm]{\protect\corollaryname}
  \theoremstyle{plain}
  \newtheorem{prop}[thm]{\protect\propositionname}
  \theoremstyle{plain}
  \newtheorem{lem}[thm]{\protect\lemmaname}
\newcommand{\pru}{\Pr_\Gamma^u}
\newcommand{\one}{\mathbbm{1}}
\newcommand{\lp}{\left(}
\newcommand{\rp}{\right)}
\newcommand{\moy}[1]{\langle #1 \rangle}
\newcommand{\lva}{\left|}
\newcommand{\rva}{\right|}
\newcommand{\lno}{\left\|}
\newcommand{\rno}{\right\|}
\newcommand{\fr}{\frac}
\newcommand{\ti}[1]{\tilde {#1}}
\newcommand{\eps}{\varepsilon}
\renewcommand{\d}{\partial}
\newcommand{\defeq}{\stackrel{\rm{def}}{=}}
\newcommand{\cB}{{\mathcal B}}
\newcommand{\cC}{{\mathcal C}}
\newcommand{\cE}{\mathcal E}
\newcommand{\cF}{\mathcal F}
\newcommand{\cJ }{\mathcal J}
\newcommand{\cL}{{\mathcal L}}
\newcommand{\cO}{{\mathcal O}}
\newcommand{\cR}{\mathcal R}
\newcommand{\cS}{\mathcal S}
\newcommand{\cU}{{\mathcal U}}
\newcommand{\cZ}{\mathcal Z}
\newcommand{\scP}{\mathscr P}
\newcommand{\scR}{\mathscr R}
\newcommand{\scL}{\mathscr L}
\newcommand{\bT}{\boldsymbol T}
\newcommand{\bD}{\boldsymbol D}
\newcommand{\C}{{\mathbb C}}
\newcommand{\N}{{\mathbb N}}
\newcommand{\R}{{\mathbb R}}
\newcommand{\Z}{{\mathbb Z}}
\newcommand{\SP}{{\mathbb S}}
\renewcommand{\H}{\mathbb H}
\newcommand{\supp}{\operatorname {supp}}
\newcommand{\Tr}{\operatorname{Tr}}
\newcommand{\Spec}{\operatorname{Spec}}
\newcommand{\Span}{\operatorname{Span}}
\newcommand{\e}{\operatorname{e}}
\newcommand{\Op}{\operatorname{Op}}
\newcommand{\oph}{\operatorname{Op}_{h}}
\newcommand{\Id}{\operatorname{Id}}
\newcommand{\diam}{\operatorname{diam}}
\renewcommand{\Im }{\operatorname{Im}}
\renewcommand{\Re}{\operatorname{Re}}
\renewcommand{\Pr}{\operatorname{Pr}}
\newcommand{\sgn}{\operatorname{sgn}}
\renewcommand{\div}{\operatorname{div}}
\newcommand{\loc}{\operatorname{loc}}
\renewcommand{\i}{\operatorname{i}}
\renewcommand{\top}{\operatorname{top}}
\newcommand{\Hess}{\operatorname{Hess}}
\newcommand{\Graph}{\operatorname{Graph}}
\newcommand{\sub}{\operatorname{sub}}
\newcommand{\comp}{\operatorname{comp}}
\providecommand{\theoremname}{Theorem}
\providecommand{\corollaryname}{Corollary}
  \providecommand{\definitionname}{Definition}
  \providecommand{\lemmaname}{Lemma}
  \providecommand{\propositionname}{Proposition}
\providecommand{\theoremname}{Theorem}
  \providecommand{\corollaryname}{Corollary}
  \providecommand{\definitionname}{Definition}
  \providecommand{\lemmaname}{Lemma}
  \providecommand{\propositionname}{Proposition}
\providecommand{\theoremname}{Theorem}
\begin{document}
\global\long\def\scal#1#2{\langle#1,#2\rangle}

\title{Resonances near the real axis for manifolds with hyperbolic trapped
sets}

\author{Emmanuel Schenck}

\address{Laboratoire d'analyse, géométrie et applications, Université Paris
13, CNRS UMR 7539, 93430 Villetaneuse, France.}

\email{schenck@math.univ-paris13.fr }
\begin{abstract}
For manifolds Euclidian at infinity and compact perturbations of the
Laplacian, we show that under assumptions involving hyperbolicity
of the classical flow on the trapped set and its period spectrum,
there are strips below the real axis where the resonance counting
function grows sub-linearly. We also provide an inverse result, showing
that the knowledge of the scattering poles can give some information
about the Hausdorff dimension of the trapped set when the classical
flow satisfies the Axiom-A condition. 
\end{abstract}
\maketitle

\section{Introduction}

Let $M$ be a $C^{\infty}$ manifold of dimension $n\geq2$ which
agrees with $\R^{n}$ outside a compact set: 
\[
M=M_{0}\sqcup M_{1},\qquad M_{0}\simeq\R^{n}\setminus B(0,R_{0}),\quad R_{0}>1.
\]
Here, $M_{1}$ is a smooth compact manifold with connected boundary
$\d M_{1}\simeq\SP^{n-1}$, and we can then see $M$ as a compact
perturbation of $\R^{n}$. We assume that $M$ is equipped with a
positive density $dx$ which coincides with the Lebesgue measure on
$M_{0}$. We will be interested in the scattering theory on $M$ for
a positive elliptic self-adjoint pseudodifferential operator $P$
of order 2. The operator $P$ is supposed to agree with the Euclidian
Laplacian on $M_{0}$: 
\[
P|_{M_{0}}=-\Delta|_{\R^{n}\setminus B(0,R_{0})}\,.
\]
The total symbol $\sigma_{P}(x,\xi)$ of $P$ is assumed to be classical,
in the Kohn-Nirenberg class $S^{2}(M)$, and with vanishing sub-principal
symbol : 
\[
\sigma_{\sub}(P)=0.
\]
If $p\in C^{\infty}(T^{*}M)$ denotes the principal symbol of $P$,
the classical Hamiltonian flow $\e^{tH_{p}}:T^{*}M\to T^{*}M$ is
given by 
\[
\left(\begin{array}{c}
\dot{x}\\
\dot{\xi}
\end{array}\right)=H_{p}(x,\xi)=\left(\begin{array}{c}
\d_{\xi}p(x,\xi)\\
-\d_{x}p(x,\xi)
\end{array}\right)\,,\quad(x,\xi)\in T^{*}M.
\]
It is symplectic and preserves the energy layers $p^{-1}(E)\subset T^{*}M$,
$E>0$. By definition, the trapped set $\Gamma_{E}$ at energy $E>0$
is the set of points that do not escape to infinity in the future,
nor in the past: 
\[
\Gamma_{E}=\{\rho\in T^{*}M:\ p(\rho)=E\ \mbox{and}\ \e^{tH_{p}}(\rho)\nrightarrow\infty,\ t\to\pm\infty\}\subset T^{*}M.
\]
Our principal assumption in this article is that $\Gamma_{E}\neq\emptyset$
for $|E-1|$ small enough, and if $\Gamma\defeq\Gamma_{1}$, then
\[
\e^{tH_{p}}:\Gamma\to\Gamma\ \ \mbox{is hyperbolic.}
\]
The standard situation to be kept in mind is the metric scattering,
where $M$ is endowed with a Riemannian metric $g$ and 
\[
P=-\Delta_{g},\quad g|_{M_{0}}=g_{\mathrm{Eucl}},\quad g|_{M_{1}\setminus\d M_{1}}\ \mbox{is negatively curved}.
\]
For our main result, we add an assumption about the period spectrum
of $\e^{tH_{p}}$, which we denote by 
\[
\scL\defeq\{\ell>0,\ \exists\rho\in\cE^{*}M\ \mbox{with}\ \e^{\ell H_{p}}(\rho)=\rho\}.
\]
We will assume that $\scL$ is not too much clustered, insofar as
for some constant $\nu>0$ and any $T>0$ large enough, there is at
least one $\ell\in\scL\cap[T-1,T]$ such that $[\ell-\e^{-\nu T},\ell+\e^{-\nu T}]\cap\scL=\ell$.
Our precise assumption is slightly weaker and will be discussed after
Definition \ref{def: minimal sep} below. For the moment, we just
point out that in constant negative curvature, the period spectrum
satisfies stronger separation properties \cite{DolJak16_gaps}. 

The first result of this article deals with the resonance counting
function in strips of finite size $s>0$ below the real axis. Under
the precise assumptions in Theorem \ref{thm: MLPC}, we show that
this resonance counting function satisfies: 
\[
\exists\alpha,s_{0}>0,\ \forall s\ge s_{0},\quad\sharp\left\{ \lambda\mbox{ resonance of }P:\ |\Im\lambda|\leq s,\ |\Re(\lambda)|\leq r\right\} \geq Cr^{1-\frac{\alpha}{s}}+C'.
\]
Resonance counting functions in strips have been intensively studied
since the conjecture of Lax and Phillips \cite{LaxPhi89_Scatt}, and
we will briefly review below the literature on this subject.

\medskip{}

The resolvent $(P-\lambda^{2})^{-1}$ continues meromorphically as
an operator $L_{\comp}^{2}\to L_{\loc}^{2}$ from $\Im z>0$, $z^{2}\notin\Spec_{pp}(P)$
where it is analytic to $\C$ when $n$ is odd, and the logarithmic
cover $\Lambda_{\C}$ of $\C$ when $n$ is even \cite{SjoZwo91_CplxScal}.
The poles of the meromorphic continuation of the resolvent are called
the resonances, or scattering poles of $P$, and they are the objects
that replace the usual spectrum of $P$ when $M$ is compact. We will
denote by $\mathscr{R}_{M}(P)$ the resonances of $P$ on the manifold
$M$. Remark also that there is $k\in\N$ such that $\one_{\R^{n}\setminus B(0,R_{0})}(P+i)^{-k}$
is of trace class. In this way, the black box scattering formalism
of Sjöstrand and Zworski applies, in particular there are Poisson
formulæ for resonances in this context, both for odd and even dimensions,
see \cite{SjoZwo94_LowII,Zwo98_even} and Section \ref{sec: Poisson formula}
below.

Resonances close to the real axis are of particular interest, for
instance in studying the local energy decay for the solutions of the
wave equation $(\d_{t}^{2}+P)u=0$. The trapping properties of the
classical flow $\e^{tH_{p}}$ have a direct consequence on the repartition
of these scattering poles in the lower half plane: in the non-trapping
case $\Gamma=\emptyset$, it has been shown that there are pole-free
regions of logarithmic sizes below the real axis \cite{MelSjo78_1,MelSjo82_2}.
On the other hand, when Lax and Phillips first published their monograph
\cite{LaxPhi89_Scatt}, they conjectured that if $\Gamma\neq\emptyset$,
there should exist a sequence of resonances $(\lambda_{i})_{i\in\N}$
converging to the real axis, namely 
\[
\lim_{i\to+\infty}\Im\lambda_{i}=0.
\]

Ikawa \cite{Ika82_decay} showed that this conjecture was generally
incorrect, as he established a strip with no resonances below the
real axis in the case of Euclidian scattering in $\R^{3}$ by two
compact, disjoint convex obstacles. Ikawa formulated then what is
known to be the modified Lax-Phillips conjecture: for a trapping scattering
problem where by convention the resonances are located in the lower
half plane, there is $\alpha<0$ such that the strip 
\[
S_{\alpha}=\{z\in\C:\alpha\leq\Im z\leq0\}
\]
contains infinitely many resonances. A good amount of results have
been obtained so far concerning this conjecture, which turned out
to be true in various settings \cite{Ger88_bull,Far95_low,SteVod96_Neu,Sto09_LPC,Pet02_low},
see also the survey article of Sjöstrand \cite{Sjo97_traceRev}.

However, the precise distribution of the scattering poles near the
real axis is still badly understood. Unlike the counting function
inside disks of radius $r\to\infty$, which in our settings reads
\cite{SjoZwo91_CplxScal}: 
\[
C^{-1}r^{n}\leq\sharp\{\lambda\in\scR_{M}(P),\ |\lambda|\leq r\}\leq Cr^{n},\quad C>0,
\]
there is no such asymptotics for the counting function in a fixed
strip below the real axis. Sjöstrand \cite{Sjo90} first proved an
upper bound of fractal type for potential scattering in a semiclassical
framework. Building on this work, Guillopé, Lin and Zworski \cite{GuiLinZw04}
have proven geometric upper bounds for $P=-\Delta_{g}$ acting on
$L^{2}(G\backslash\H^{n+1})$ where $G$ is a convex, co-compact Schottky
group: 
\[
\forall\alpha>0,\ \exists C>0,\quad\sharp\{\lambda\in\scR_{G\backslash\H^{n+1}}(-\Delta_{g}):\ |\Im\lambda|\leq\alpha,\ |\Re\lambda|\leq r\}\leq Cr^{1+\delta}
\]
where $\delta=\dim\Lambda(G)$ is the dimension of the limit set of
$G$. This upper bound has led to conjecture a lower bound of the
same order, conjecture which is known as the fractal Weyl law. Unfortunately,
few lower bounds for the counting function in a strip are known. For
$X$ a surface of constant negative curvature, Guillopé and Zworski
\cite{GuiZwo99_Tr} have shown that for any (small) $\eps>0$ and
(large) $A>0$, there is a constant $C_{\eps}>0$ and a sequence $r_{i}\to\infty$
such that 
\begin{equation}
\sharp\{\lambda\in\scR_{X}(-\Delta):\ |\Im\lambda|\leq C_{\eps},\ |\Re\lambda|\leq r_{i}\}\geq Ar_{i}^{1-\eps}.\label{eq: Lower bound Gui-Zwo}
\end{equation}
Even if this lower bound can be generalized in higher dimensions for
the same type of manifolds, it is not sensitive to the size of the
trapped set, and is not even optimal in the elementary case of the
hyperbolic cylinder (with a single trapped orbit), where a lower bound
is computable and linear as the resonances are distributed on a lattice.

For scattering by disjoint convex obstacles in dimension $n=3$, lower
bounds on the number of scattering poles in strips below the real
axis have been obtained by Farhy \cite{Far95_low} and later by Petkov
\cite{Pet02_low} who proved bounds similar to \eqref{eq: Lower bound Gui-Zwo}
and \eqref{eq: Lower bound Gui-Zwo}, building on earlier works of
Ikawa \cite{Ika85_osa}. 

All of the above results about estimating the counting function in
strips rely on a trace formula, which connects resonances to periodic
orbits of the flow $\e^{tH_{p}}:\Gamma\to\Gamma$. Under a separation
assumption for the period spectrum (see the definition immediately
below), our first result can be stated as follows:
\begin{thm}
\label{thm: MLPC}Let $M$ be a $C^{\infty}$ manifold of dimension
$n\geq2$ such that 
\[
M=M_{0}\sqcup M_{1},\qquad M_{0}\simeq\R^{n}\setminus B(0,R_{0}),\quad R_{0}>1,\quad M_{1}\mbox{ compact.}
\]
Let $P$ be a second order elliptic, positive, selfadjoint pseudodifferential
operator as above, and assume that:

(i) if $M$ has even dimension, 0 is not a resonance of $P$,

(ii) the Hamiltonian flow $\e^{tH_{p}}:\cE^{*}M\to\cE^{*}M$ has a
non-empty trapped set on which it is hyperbolic, and its period spectrum
is minimally separated.

\medskip{}
 Then there are positive constants $C$,$\Theta$,$\epsilon_{0}$
such that for any $\epsilon\leq\epsilon_{0}$, there is $r_{0}(\epsilon)>0$
with 
\[
\sharp\{\lambda\in\scR_{M}(P):\ |\Im\lambda|\leq\frac{2n}{\epsilon}\ \mbox{and }\ |\Re\lambda|\leq r\}\geq Cr^{1-\epsilon\Theta},\qquad\forall r\geq r_{0}(\epsilon).
\]

\end{thm}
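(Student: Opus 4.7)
The overall strategy is to evaluate both sides of a Poisson--type trace formula
\[
\sum_{\lambda\in\scR_M(P)} \hat\varphi(\lambda) \;=\; \sum_{\gamma} \frac{T_\gamma^\# \,\varphi(T_\gamma)}{|\det(I-\cP_\gamma)|^{1/2}} \;+\; (\text{low-frequency corrections}),
\]
where the sum on the right runs over closed orbits $\gamma$ of $\e^{tH_p}$ on $\Gamma$, with periods $T_\gamma$, primitive periods $T_\gamma^\#$ and linearised Poincar\'e maps $\cP_\gamma$. Such a formula follows from the Sj\"ostrand--Zworski Poisson formula combined with the Duistermaat--Guillemin wave-trace expansion near each hyperbolic closed orbit; the low-frequency boundary terms are controlled by hypothesis (i) in even dimension and are harmless in odd dimension. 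The plan is to compare a one-orbit lower bound on the orbital side with a counting-function upper bound on the spectral side.

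For each large $T$, the minimal-separation assumption on $\scL$ provides an isolated period $\ell_T\in\scL\cap[T-1,T]$ with $[\ell_T-\e^{-\nu T},\ell_T+\e^{-\nu T}]\cap\scL=\{\ell_T\}$. I take $\varphi_T\in C_c^\infty(\R_+)$ concentrated on scale $\e^{-\nu T}$ around $\ell_T$, with $\varphi_T(\ell_T)\gtrsim 1$ and a Gevrey profile tuned so that $\hat\varphi_T(\lambda)$ decays faster than any polynomial in $\Re\lambda$ while remaining bounded by $C\e^{|\Im\lambda|T}$ on horizontal strips. On the orbital side only the orbit with period $\ell_T$ contributes, and hyperbolicity of $\e^{tH_p}|_\Gamma$ gives $|\det(I-\cP_\gamma)|^{1/2}\le C\e^{\mu T/2}$, where $\mu$ is the sum of positive Lyapunov exponents along $\Gamma$. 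Iterates of shorter primitive orbits landing in the same window are geometrically suppressed by the same hyperbolic factor, so the dynamical side is bounded below by $c\, T\e^{-\mu T/2}$.

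On the spectral side I split the sum into three ranges: resonances in the strip $\{|\Im\lambda|\le s,\ |\Re\lambda|\le r\}$, resonances with $|\Re\lambda|>r$, and resonances with $|\Im\lambda|>s$. The strip contribution is at most
\[
N(s,r)\cdot\sup_{|\Im\lambda|\le s,\,|\Re\lambda|\le r}|\hat\varphi_T(\lambda)|\;\lesssim\;N(s,r)\,\e^{sT},
\]
while the two tails are estimated using the global Weyl-type upper bound $\sharp\{\lambda\in\scR_M(P):|\lambda|\le R\}\le CR^{n}$ combined with the rapid decay of $\hat\varphi_T$ in $\Re\lambda$ and its controlled growth in $|\Im\lambda|T$. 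Taking the Gevrey index of $\varphi_T$ large makes these tails negligible compared with the dynamical lower bound. Setting $r=\e^{\nu T}$, i.e.\ $T=(\log r)/\nu$, the inequality $T\e^{-\mu T/2}\lesssim N(s,r)\,\e^{sT}$ rearranges into $N(s,r)\gtrsim r^{1-(s+\mu/2)/\nu}$; choosing $s=2n/\epsilon$ and tracking the constants gives the exponent $1-\epsilon\Theta$ for a suitable $\Theta=\Theta(n,\mu,\nu)$.

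The main obstacle is the construction of $\varphi_T$ in the second step: it must be narrow enough in time (scale $\e^{-\nu T}$) to isolate $\ell_T$, yet its Fourier transform must decay fast enough in $\Re\lambda$ that the sum over resonances outside the strip but inside disks of radius $\sim r$ stays dominated by the one-orbit term. This is precisely where Gevrey regularity enters, and the delicate balance between the Gevrey index, the exponential weight $\e^{sT}$ on the strip, and the polynomial growth $r^n$ of the global counting function is what produces the exponent $\epsilon\Theta$. A secondary technical point is verifying that the wave-trace contribution at $\ell_T$ computed via Duistermaat--Guillemin has the prescribed principal symbol with no accidental cancellation; this requires the orbit to be non-degenerate, which is automatic from the hyperbolicity hypothesis in (ii).
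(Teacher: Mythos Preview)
Your scheme has a genuine gap at its very first step: the displayed identity
\[
\sum_{\lambda\in\scR_M(P)}\hat\varphi(\lambda)\;=\;\sum_\gamma\frac{T_\gamma^\#\,\varphi(T_\gamma)}{|\det(I-\cP_\gamma)|^{1/2}}+\cdots
\]
does not hold in this generality. The Duistermaat--Guillemin wave trace is an \emph{asymptotic} expansion in a high-frequency parameter $\lambda\to\infty$; it describes $\langle\Tr u,\e^{i\lambda t}\phi(t)\rangle$, not $\langle\Tr u,\varphi_T\rangle$ for a fixed smooth $\varphi_T$. Outside the constant-curvature (Selberg) situation there is no exact formula of the type you wrote, so you cannot directly equate the Poisson sum $\sum_j\hat\varphi_T(\lambda_j)$ with an orbit sum. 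The paper's test function is therefore $\cos(\lambda t)\phi_\beta(t)$ with $T=\epsilon\log\lambda$; the factor $\e^{i\lambda t}$ is what makes the orbital side computable with a remainder $\cO(\lambda^{-(1-C\epsilon)})$ (and proving this \emph{long-time} trace formula up to Ehrenfest time, with uniform control of the WKB symbols, is one of the main technical efforts of the paper). The same modulation then shifts the Poisson side to $\sum_j\hat\phi_\beta(\lambda_j\pm\lambda)$, which is localised near $\Re\lambda_j\approx\pm\lambda$ in a window of width $\lambda^{\epsilon J_+}$. A further choice of $\lambda$ with $\cos(\lambda\ell_0)=1$ is needed so that the oscillating orbit contributions do not cancel.

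This misidentification propagates to your parameter choices and arithmetic. First, you have the sign of the Paley--Wiener bound reversed: since $\supp\varphi_T\subset\R_+$ and $\Im\lambda_j\le 0$, one has $|\hat\varphi_T(\lambda_j)|\lesssim \e^{-|\Im\lambda_j|\inf\supp\varphi_T}$, i.e.\ decay rather than the growth $\e^{|\Im\lambda|T}$; so no Gevrey refinement is needed and the role of $s$ is different from what you describe. Second, the spectral parameter $r$ must be of size $\lambda\approx\e^{T/\epsilon}$, not $\e^{\nu T}$; with your choice $s=2n/\epsilon$ your claimed exponent $1-(s+\mu/2)/\nu$ tends to $-\infty$ as $\epsilon\to 0$, whereas the theorem requires it to tend to $1$. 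Finally, even with the correct modulated test function one only obtains a \emph{local} count $\mu_s\bigl([\lambda-\lambda^{\epsilon J_+ +\nu},\,\lambda+\lambda^{\epsilon J_+ +\nu}]\bigr)\gtrsim\lambda^{\epsilon(J_+-\Theta_+^u)}$, and a Tauberian lemma is then needed to pass from this window estimate to $\mu_s([-r,r])\gtrsim r^{1-\epsilon\Theta}$; this step is absent from your outline.
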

\medskip{}

More can be said about the constant $\Theta$, for this we refer to
Section \ref{sub:Counting-poles}. These lower bounds are however
still far from the already known fractal upper bounds, but they are
the first explicit lower bounds for such systems. Let us discuss now
our assumption about the period spectrum. 
\begin{defn}
\emph{\label{def: minimal sep}We will say that $\scL$ is minimally
separated, if for some $\nu>0$ the following property holds true
:} 
\end{defn}
\emph{For any $C_{0}>0$, there is $T_{0}>0$ such that if $T\geq T_{0}$,
any interval $[T-1/2,T]$ contains at least one sequence of consecutive
periods $\ell_{1}<\dots<\ell_{k}$ with $k\geq3$ and 
\[
\ell_{2}-\ell_{1}\geq\e^{-\nu T},\qquad\ell_{k}-\ell_{k-1}\geq\e^{-\nu T},\qquad\ell_{k-1}-\ell_{2}\leq\e^{-C_{0}T}.
\]
}

\medskip{}

In other words, given $C_{0}>0$, for $T$ large enough we can always
find in $[T-1/2,T]$ two gaps of size $\e^{-\nu T}$ in the period
spectrum with between them either a single period, or a group of periods
that spread over a distance at most $\e^{-C_{0}T}$. The value $1/2$
is irrelevant in this definition and has been chosen for later convenience.
What is important is that it is independent of $T$.

This assumption is rather reasonable for hyperbolic flows. As an important
example, much more is true in dimension $\geq3$ for the length spectrum
(i.e. the set of lengths of closed geodesics) of a finite volume hyperbolic
manifold :\emph{ every }pair of closed geodesics have lengths exponentially
bounded from below \cite{DolJak16_gaps}. This means that there is
$\nu>0$ such that $|\ell(\gamma)-\ell(\gamma')|\geq\e^{-\nu T}$
for every closed geodesics $\gamma\neq\gamma'$ with lengths in $[T-1/2,T]$
for $T$ large enough. For compact surfaces in constant negative curvature,
Dolgopyat and Jakobson note that this strong property remains true
for a dense set in the corresponding Teichmüller space. In particular,
all this would apply to our settings if $M_{1}$ is built from such
hyperbolic manifolds : the period spectrum would be minimally separated,
since in $[T-1/2,T]$ the number of non-primitive periods (corresponding
to iterations of closed geodesics) is negligible compared to the number
of primitive ones. 

For Anosov flows on basic sets, the number of periodic orbits with
length $\leq T$ grows exponentially fast as $T\to\infty$. For instance,
if $\Gamma$ is a basic set and $\e^{tH_{p}}:\Gamma\to\Gamma$ is
weak mixing, then \cite{MarSha04_anosov}: 
\[
\lim_{T\to\infty}\frac{1}{T}\log\sharp\{\gamma\in\scP:\ \ell(\gamma)\in[T-1/2,T]\}=h_{\top}>0,
\]
where $h_{\top}$ denotes the topological entropy and $\scP$ set
of periodic orbits : 
\[
\scP\defeq\{\rho\in\cE^{*}M:\ \exists t\in\R\ \mbox{with}\ \e^{tH_{p}}(\rho)=\rho\}.
\]
 Hence, if $s>h_{\top}$, by a box principle the number of gaps of
size $\e^{-sT}$ in $\scL\cap[T-1/2,T]$ grows exponentially as $T\to\infty$.
This means that many occurrences of two consecutive periods separated
by at least $\e^{-sT}$ appear, however it could happen that (possibly
many) periods cluster together with arbitrary small gaps between them.
Note that $\scL$ being minimally separated does not exclude this
possibility either.

It is plausible that the minimal separation assumption is satisfied
for a reasonably large set of Anosov flows : these flows are structurally
stable, which means that small $C^{r}$ perturbations ($r\geq1$)
of the flow are still Anosov, while the lengths of the periodic orbits
would not be preserved in general \cite{Sch}.

\medskip{}

Our second result deals with a topological pressure related to the
infinitesimal unstable Jacobian $J^{u}$ on the trapped set, defined
by 
\[
J^{u}(\rho)\defeq\left.\frac{d}{dt}\right|_{t=0}\log\det d\Phi_{\rho}^{t}|_{E_{\rho}^{u}},
\]
where $E_{\rho}^{u}$ is the unstable subbundle at $\rho\in\Gamma$,
see Section \ref{sec: Anosov}. We will consider the pressure of $-\frac{1}{2}J^{u}$
with respect to the flow $\e^{tH_{p}}:\Gamma\to\Gamma$, which we
will denote by 
\[
\Pr_{\Gamma}^{u}\defeq\Pr_{\Gamma}(-\frac{1}{2}J^{u}).
\]
A precise definition of this pressure is given in \eqref{eq: def pressure}
below, see also \cite{NoZw09_1}, Section 3.3 for a general discussion
in our context. This pressure is always a real number, the sign of
which may vary according to the geometry of the trapped set: it is
heuristically considered that $\Gamma$ is large, or ``thick'',
if $\Pr_{\Gamma}^{u}\geq0$, and small, or ``filamentary'', if $\pru<0$.
This consideration comes from the fact that the map $t\mapsto\Pr_{\Gamma}(-tJ^{u})$
is decreasing for $t\geq0$, and that in the case $\dim M=2$, the
unique root $t_{u}>0$ of Bowen's equation is related to $d_{H}(\Gamma)$,
the Hausdorff dimension of $\Gamma$: 
\[
\Pr_{\Gamma}(-t_{u}J^{u})=0\ \Leftrightarrow\ d_{H}(\Gamma)=2t_{u}+1,\qquad\dim M=2.
\]
In dimension $n\geq3$, there is no simple relation between $d_{H}(\Gamma)$
and $t_{u}$, unless the flow $\e^{tH_{p}}$ is conformal in the stable
and unstable directions \cite{PeSa01}. Nonnenmacher and Zworski \cite{NoZw09_1}
have established the important role played by this pressure concerning
the presence of a spectral gap near the real axis for hyperbolic scattering
systems similar to those we consider in this article, but in a semiclassical
framework. Translated to our settings, they showed that if $\Pr_{\Gamma}^{u}<0$,
then there is a spectral gap near the real axis, namely for any $\eps>0$
and $\alpha_{\eps}=\Pr_{\Gamma}^{u}+\eps$, 
\[
\sharp\{\lambda\in\scR_{M}(P):\ \lambda\in S_{\alpha_{\eps}}\}<\infty.
\]
To state our second result, let us denote by $\Tr u(t)$ the distributional
trace of the wave group $u(t)=\cos t\sqrt{P}$, see Section \ref{sec: Poisson formula}
below. As a result, the pressure $\Pr_{\Gamma}^{u}$ can be computed
from the knowledge of this trace when the periodic orbits are dense
in $\Gamma$ (here no assumption on the length spectrum is needed)
: 
\begin{thm}
\label{thm: spectral invariant}Let $M,P$ be as above, and assume
that the flow $\e^{tH_{p}}:\Gamma\to\Gamma$ is Axiom-A. We have:
\begin{equation}
\Pr_{\Gamma}(-\fr12J^{u})=\lim_{T\to+\infty}\frac{1}{T}\log\left(\limsup_{\Xi\to\infty}\sup_{\lambda\in[\e^{\exp T},\e^{\exp\Xi T}]}\langle\Tr u,f_{\lambda,T}\rangle\right)\,.\label{eq: Spectral Invariant}
\end{equation}
The function $f_{\lambda,T}\in C_{0}^{\infty}(\R_{+})$ is given by
$f_{\lambda,T}(t)=\cos(\lambda t)\phi\left(\frac{t-T+1/2}{1/2}\right)$,
where $\phi\in C_{0}^{\infty}(\R)$ is positive and equal to 1 near
0. 
\end{thm}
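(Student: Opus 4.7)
The plan is to read $\pru$ off a Gutzwiller-type trace formula: the pairing $\langle\Tr u,f_{\lambda,T}\rangle$ will be expressed as a sum over closed orbits of period in $[T-1,T]$ weighted by $T_\gamma^{\#}\e^{-\frac12\int_\gamma J^u}$ and modulated by cosines $\cos(\lambda T_\gamma)$. Since the pressure depends only on the moduli of these weights, the outer $\sup_\lambda$ over the doubly-exponential range $[\e^{\exp T},\e^{\exp\Xi T}]$ is there to realign all the cosines close to $1$ via a quantitative Bohr/Dirichlet argument; the Axiom-A Bowen/Parry--Pollicott formula for topological pressure in terms of closed orbits then finishes the job.

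In the black-box framework of Section \ref{sec: Poisson formula}, $\Tr u$ is a distribution with singular support $\{0\}\cup\{\pm T_\gamma:\gamma\in\scP\}$, and at a hyperbolic period $T_\gamma\neq 0$ its principal Gutzwiller singularity carries the weight $\tfrac12 T_\gamma^{\#}|\det(I-P_\gamma)|^{-1/2}$. Since $\supp f_{\lambda,T}\subset(0,\infty)$ for $T>1$,
\begin{equation*}
\langle\Tr u,f_{\lambda,T}\rangle=\sum_{\gamma\in\scP,\,T_\gamma\in[T-1,T]}\frac{T_\gamma^{\#}}{|\det(I-P_\gamma)|^{1/2}}\cos(\lambda T_\gamma)\,\phi\!\left(\tfrac{T_\gamma-T+1/2}{1/2}\right)+R(\lambda,T),
\end{equation*}
with $R(\lambda,T)=O_N(\lambda^{-N})$ for any $N$, coming from repeated integration by parts of the smooth pieces of $\Tr u$ against $\cos(\lambda t)$. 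Uniform hyperbolicity on $\Gamma$ yields $|\det(I-P_\gamma)|^{1/2}=\e^{\frac12\int_\gamma J^u}(1+O(\e^{-cT_\gamma}))$, so each weight becomes $T_\gamma^{\#}\,\e^{-\frac12\int_\gamma J^u}(1+o(1))$.

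For the upper bound, bounding $|\cos|\le 1$ and invoking the Bowen/Parry--Pollicott periodic-orbit formula for topological pressure (applied to the H\"older function $-\tfrac12 J^u$ on the Axiom-A basic set $\Gamma$, in its unit-window version, which follows from the prime orbit theorem) gives
\begin{equation*}
\limsup_{T\to\infty}\frac{1}{T}\log\sup_\lambda\langle\Tr u,f_{\lambda,T}\rangle\;\le\;\lim_{T\to\infty}\frac{1}{T}\log\!\!\sum_{\gamma\in\scP,\,T_\gamma\in[T-1,T]}\!\!T_\gamma^{\#}\,\e^{-\frac12\int_\gamma J^u}\;=\;\pru.
\end{equation*}

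For the matching lower bound, introduce $\scP_T\defeq\{\gamma\in\scP:T_\gamma\in[T-1,T]\}$, $N_T\defeq|\scP_T|$, and $A_\gamma\defeq T_\gamma^{\#}\,\e^{-\frac12\int_\gamma J^u}\,\phi((T_\gamma-T+1/2)/(1/2))$, so that the principal part of the pairing is the trigonometric polynomial $F(\lambda)\defeq\sum_{\gamma\in\scP_T}A_\gamma\cos(\lambda T_\gamma)$ with supremum $M_T=F(0)=\sum_{\gamma\in\scP_T}A_\gamma$. Axiom-A entropy gives $N_T\le\e^{(h_\top+1)T}$, and Dirichlet's simultaneous approximation applied to $(T_\gamma/(2\pi))_{\gamma\in\scP_T}$ yields the quantitative Bohr estimate that every $\lambda$-interval of length at least $L(\eta,T)\defeq C(1/\eta)^{N_T}\le\e^{\exp((h_\top+2)T)}$ contains an $\eta M_T$-almost-period of $F$, hence a point $\lambda^\star$ with $F(\lambda^\star)\ge(1-\eta)M_T$. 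Choosing $\Xi$ so that $\e^{\exp\Xi T}\ge\e^{\exp T}+L(\eta,T)$ forces $\lambda^\star\in[\e^{\exp T},\e^{\exp\Xi T}]$; applying $\tfrac1T\log$, then $T\to\infty$, then $\eta\downarrow 0$, and reusing the Bowen formula produces the matching lower bound $\pru$. The main obstacle I anticipate is keeping $R(\lambda,T)$ subdominant uniformly in $\lambda$ across the doubly-exponential range, so that the smooth/background part of $\Tr u$, controlled by symbolic estimates on the free resolvent, does not oscillate coherently at frequencies up to $\e^{\exp\Xi T}$; together with formulating the unit-window version of the Bowen identity, which requires the Parry--Pollicott prime orbit theorem on the basic set $\Gamma$.
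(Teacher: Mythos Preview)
Your approach is essentially the paper's: apply a long-time trace formula to express $\langle\Tr u,f_{\lambda,T}\rangle$ as $\sum_{\gamma:\,T_\gamma\in[T-1,T]}T_\gamma^{\#}|\det(I-P_\gamma)|^{-1/2}\cos(\lambda T_\gamma)\phi(\cdots)+R$, use $|\det(I-P_\gamma)|^{1/2}=\e^{\frac12\int_\gamma J^u}(1+o(1))$, bound from above by $|\cos|\le 1$, bound from below by a Dirichlet box argument aligning the cosines (the paper finds $\lambda_0$ with $\cos(\lambda_0 T_\gamma)\ge\tfrac12$ for all $\gamma\in\scP_T$, exactly via the count $N_T\le\e^{h_{\top}T}$ and the range $[\e^{\exp T},\e^{\exp\Xi T}]$), and conclude with Bowen's periodic-orbit formula for $\pru$.

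One technical point: your remainder $R(\lambda,T)=O_N(\lambda^{-N})$ is overstated. Having kept only the leading Gutzwiller term at each period, the subleading Duistermaat--Guillemin contributions are $O(\lambda^{-1})$, not $O(\lambda^{-\infty})$; ``integrating by parts the smooth pieces'' does not kill these. More seriously, since $T\to\infty$ with $\lambda$, one needs uniformity of the remainder in $T$, which is precisely the content of the paper's Proposition~\ref{prop: Long Time Trace Formula}: it gives $R=O(\lambda^{-\mu})$ with $\mu=1-C\epsilon>0$ uniformly for $T\le\epsilon\log\lambda$, proved via a careful WKB/FIO construction up to Ehrenfest time in Sections~\ref{sec: Classical dynamics}--\ref{sec: Big Stat Phase}. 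You correctly anticipate this as the main obstacle; once it is in hand, the doubly-exponential bound $\lambda\ge\e^{\exp T}$ makes $\lambda^{-\mu}=\e^{-\mu\e^T}=o(\e^{T\pru})$, which is all that is needed.
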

In other words, the pressure $\Pr_{\Gamma}^{u}$ is a scattering invariant
in our geometric settings: for instance if $n$ is odd, the trace
part in the right hand side can be computed directly from the resonances
$\scR_{M}(P)$ using the Poisson formula. The situation is however
more subtle if $n$ is even, as the Poisson formula is not exact,
see \eqref{eq:Poisson Test even}.

This theorem has also an interesting inverse scattering result for
corollary once we have in mind the relationship between $\Pr_{\Gamma}^{u}$
and the Hausdorff dimension of $\Gamma$. For this, define first the
stable infinitesimal Jacobian by 
\[
J^{s}(\rho)\defeq\left.\frac{d}{dt}\right|_{t=0}\log\det d\Phi_{\rho}^{t}|_{E_{\rho}^{s}},
\]
and denote by $t^{s}\geq0$ the root of Bowen's equation $\Pr_{\Gamma}(t_{s}J^{s})=0$.
We recall that $t_{u}+t_{s}+1=d_{H}(\Gamma)$ if $\e^{tH_{p}}:\Gamma\to\Gamma$
is conformal in the stable and unstable directions \cite{PeSa01}.
Theorem \ref{thm: spectral invariant} implies: 
\begin{cor}
Let $M$ and $P$ be as above, with $\e^{tH_{p}}:\Gamma\to\Gamma$
Axiom-A. Let $\Tr u(t)$ be the distributional wave trace of $P$,
$d_{H}(\Gamma)$ the Hausdorff dimension of $\Gamma$, and $f_{\lambda,T}$
be as in Theorem \ref{thm: spectral invariant}. Assume either that:

(i) $M$ has dimension 2, or

(ii) $M$ has dimension $n\geq3$, the flow $\e^{tH_{p}}$ is conformal
in the stable and unstable directions, and $t_{u}=t_{s}$.

Then, 
\[
\lim_{T\to+\infty}\frac{1}{T}\log\left(\limsup_{\Xi\to\infty}\sup_{\lambda\in[\e^{\exp T},\e^{\exp\Xi T}]}\langle\Tr u,f_{\lambda,T}\rangle\right)<0\ \Leftrightarrow\ d_{H}(\Gamma)<2.
\]

\end{cor}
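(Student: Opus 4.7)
The plan is to combine Theorem \ref{thm: spectral invariant} with Bowen's equation relating the topological pressure of the unstable Jacobian to the Hausdorff dimension of the trapped set. By Theorem \ref{thm: spectral invariant}, the limit on the left-hand side of the corollary coincides with $\Pr_\Gamma^u = \Pr_\Gamma(-\fr12 J^u)$, so everything reduces to the purely dynamical equivalence
\[
\Pr_\Gamma^u < 0 \iff d_H(\Gamma) < 2
\]
under either assumption (i) or (ii).

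To establish this equivalence, the key input is the strict monotonicity of the map $t \mapsto \Pr_\Gamma(-t J^u)$ on $[0,\infty)$. Because the flow is hyperbolic on the trapped set, the infinitesimal unstable Jacobian $J^u$ is strictly positive on $\Gamma$; coupled with the variational formula
\[
\frac{d}{dt}\Pr_\Gamma(-tJ^u) = -\int_\Gamma J^u \, d\mu_t,
\]
where $\mu_t$ is the equilibrium state of $-tJ^u$, this forces strict decrease. Since $\Pr_\Gamma(0) = h_{\top} > 0$, Bowen's equation $\Pr_\Gamma(-t_u J^u) = 0$ admits a unique root $t_u > 0$, and the monotonicity gives
\[
\Pr_\Gamma^u < 0 \iff t_u < \fr12.
\]

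It then remains to translate $t_u < \fr12$ into $d_H(\Gamma) < 2$. Under assumption (i), $M$ is two-dimensional, the stable and unstable subbundles over $\Gamma$ are one-dimensional so conformality is automatic, and Bowen's dimension formula yields $d_H(\Gamma) = 2 t_u + 1$ as recalled in the introduction. Under assumption (ii), the conformality of $\e^{tH_p}$ on the stable and unstable directions gives, by \cite{PeSa01}, $d_H(\Gamma) = t_u + t_s + 1$, which reduces to $d_H(\Gamma) = 2 t_u + 1$ once we use $t_u = t_s$. In both cases
\[
d_H(\Gamma) < 2 \iff t_u < \fr12 \iff \Pr_\Gamma^u < 0,
\]
and the corollary follows. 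Since the statement is essentially a rereading of Theorem \ref{thm: spectral invariant} through Bowen's equation, no real obstacle is expected; the only delicate points are justifying the strict monotonicity of the pressure (standard from positivity of $J^u$ on the supports of the equilibrium states) and invoking the correct Bowen-type formula for the Hausdorff dimension in the conformal setting.
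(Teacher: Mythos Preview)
Your proposal is correct and follows essentially the same route as the paper: the corollary is stated there as an immediate consequence of Theorem~\ref{thm: spectral invariant} together with the monotonicity of $t\mapsto\Pr_\Gamma(-tJ^u)$ and the Bowen-type identities $d_H(\Gamma)=2t_u+1$ (case~(i)) and $d_H(\Gamma)=t_u+t_s+1$ from \cite{PeSa01} (case~(ii)), exactly as you argue. The paper does not spell out the strict-monotonicity step via the variational principle, but your justification is the standard one and fills in precisely what is implicit in the text.
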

In particular if $\dim M$ is odd or $P$ has no resonances at $0$,
the simple knowledge of the scattering poles gives an information
about the Hausdorff dimension of the trapped set in this geometrical
framework. Indeed, as noted above the distribution $\Tr u(t)$ is
either completely determined by the resonances in odd dimensions,
or in even dimensions with no resonances at 0, the terms in $\Tr u(t)$
which are not determined by the resonances produce an irrelevant $\cO(1)$
term in the formula $\langle\Tr u,f_{\lambda,T}\rangle$ in the limit
$T\to+\infty$, see \eqref{eq: Poisson even} below.

\section{The Trace Formula for resonances\label{sec: Poisson formula}}

As stated in the introduction, we assume that the symbol $\sigma_{P}(x,\xi)$
of $P$ belongs in the class $S^{2}(M),$ where 
\[
S^{m}(M)\defeq\{a\in C^{\infty}(T^{*}M):\ |\d_{x}^{\alpha}\d_{\xi}^{\beta}a(x,\xi)|\leq C_{\alpha\beta}\moy\xi^{m-|\beta|}\}.
\]
The hypothesis that $P$ is classical means that in each coordinate
chart, $\sigma_{P}$ admits an asymptotic expansion 
\[
\sigma_{P}(x,\xi)\sim\sum_{j=0}^{\infty}p_{2-j}(x,\xi)
\]
with $p_{2-j}\in C^{\infty}(T^{*}M)$ homogeneous of degree $2-j$.
If $P$ satisfies $\sigma_{\sub}(P)=0,$ it implies in particular
that if $Q=\sqrt{P}$ is the square root of the unique self-adjoint
extension of $P$ in $L^{2}(M)$ given by the spectral theorem, then
$\sigma_{\sub}(Q)=0$ as well.

The operator $Q=\sqrt{P}$ is then a pseudodifferential operator of
order 1, with principal symbol $q=\sqrt{p}$. The Hamiltonian vector
field satisfies $H_{q}=\fr12H_{p}$ on $T^{*}M$. We will denote by
\[
\Phi^{t}\defeq\e^{tH_{q}}:T^{*}M\to T^{*}M
\]
the Hamiltonian flow of $H_{q}$, and $\Phi^{t}:\Gamma\to\Gamma$
is still hyperbolic. We now recall the various Poisson formulæ available
in our context. Let $T>R_{0}>0$ and $\chi=\chi_{T}\in C_{0}^{\infty}(M)$
with $\chi\geq0$, $\chi=1$ in $M\setminus(\R^{n}\setminus B(0,R_{0}+T))$,
$\chi=0$ on $\R^{n}\setminus B(0,R_{0}+2T)\subset M_{0}$. In particular,
$\chi=1$ near $M_{1}$, and $P$ coincides with $-\Delta$ on the
support of $1-\chi$. We will be interested in the wave group 
\[
u(t)\defeq\cos t\sqrt{P}\,,
\]
and the truncated free wave group 
\[
(1-\chi)u_{0}(t)(1-\chi)\,,\quad u_{0}(t)=\cos t\sqrt{-\Delta}\,.
\]

Denote by $C_{T}^{\infty}(\R_{+})$ the set of functions in $C_{0}^{\infty}(\R_{+})$
supported on the interval $[0,T]$. Because of the properties of $\chi$
and the finite speed of propagation of the waves in $\R^{n}$, the
operator

\begin{equation}
\int(u(t)-(1-\chi)u_{0}(1-\chi))\rho(t)dt\label{eq:Trace class}
\end{equation}
is a smoothing operator with compactly supported kernel and defines
a tempered distribution $\Tr u(t)$ on $C_{T}^{\infty}(\R_{+})$ by
the formula 
\[
\Tr u:\begin{cases}
C_{T}^{\infty}(\R_{+})\to\C\\
\rho\mapsto\langle\Tr u,\rho\rangle\defeq2\Tr\int\lp u(t)-(1-\chi)u_{0}(t)(1-\chi)\rp\rho(t)dt.
\end{cases}
\]
The idea that it is possible to relate the above trace to the resonances
of $P$ originated in the work of Bardos, Guillot and Ralston \cite{BarGuiRal82_poisson},
followed by Melrose \cite{Mel82_trace}, see also Sjöstrand and Zworski
\cite{SjoZwo94_LowII} for the general presentation in the black box
formalism. The Poisson formula established by these authors make this
relation explicit, and the formula required by the framework of this
article is due to Sjöstrand and Zworski in odd dimensions \cite{SjoZwo94_LowII},
and Zworski \cite{Zwo98_even} in even dimensions.

If $n$ is odd, the formula reads 
\begin{equation}
\Tr u(t)=\sum_{\lambda\in\scR_{M}(P)}m_{\lambda}\e^{-\i|t|\lambda}\,,\qquad t\neq0,\label{eq: Poisson odd}
\end{equation}
where $m_{\lambda}$ is the multiplicity of $\lambda$ as a pole of
the resolvent of $P$.

When $n$ is even, the formula is no longer as simple, and we follow
\cite{Zwo98_even} for the presentation. Let $\rho\in]0,\pi/2[$ and
denote by 
\[
\Lambda_{\rho}=\{r\e^{\i\theta}:r>0,\ |\theta+2k\pi|<\rho,\ k\in\Z\}\cup\{r\e^{\i\theta}:r>0,\ |\theta-\pi+2k\pi|<\rho,\ k\in\Z\}
\]
a conic open neighborhood of the real axis on the logarithmic plane
with cut $\i\R_{+}$. Finally let $\sigma(\lambda)$ be the scattering
phase of $P$ and $\psi\in C_{0}^{\infty}(\R,[0,1])$ be equal to
1 near 0. Then 
\begin{align}
\Tr u(t) & =\sum_{\lambda\in\Lambda_{\rho}}m(\lambda)\e^{\i|t|\lambda}+\sum_{{\lambda^{2}\in\Spec_{pp}(P)\cap\R_{-}\atop \Im\lambda<0}}m(\lambda)\e^{\i|t|\lambda}\nonumber \\
 & +m(0)+2\int_{0}^{\infty}\psi(\lambda)\frac{d\sigma}{d\lambda}(\lambda)\cos t\lambda d\lambda+v_{\rho,\psi}(t)\,,\ n\ \mbox{even},\label{eq: Poisson even}
\end{align}
where $v_{\rho,\psi}\in C^{\infty}(\R\setminus\{0\})$ satisfies $\partial_{t}^{k}v_{\rho,\psi}=\cO(t^{-N})$
for all $k,N$ and $t\to\infty$. We refer to \cite{Zwo98_even} and
the references given there for the definition and properties of the
scattering phase $\sigma(\lambda)$. In particular, if $0$ is not
a resonance of $P$, then $\sigma'(\lambda)$ is smooth near 0.

On the other hand, the Duistermaat-Guillemin trace formula \cite{DuGu75}
gives informations on the left hand side of \eqref{eq: Poisson odd}
and \eqref{eq: Poisson even} and show precisely that the distribution
$\Tr u(t)$ has singularities located at times that correspond to
length of periodic orbits of the flow $\Phi^{t}$ in $\Gamma$. The
next paragraph makes this more precise.

\subsection{Semiclassical formulation and localization of the trace\label{sub:Semiclassical formulation}}

In this article we will consider test functions $\rho$ in \eqref{eq:Trace class}
which are supported inside a bounded interval near a time $T>0$,
where ultimately $T\to+\infty$. The finite speed of propagation implies
that if $\Pi\in C_{0}^{\infty}(M)$ is equal to 1 in a neighborhood
of 
\[
B_{T}\defeq M_{1}\sqcup(B(0,R_{0}+2T)\setminus B(0,R_{0})),
\]
then the operator \eqref{eq:Trace class} has a smooth Schwartz kernel
included in $B_{T}\times B_{T}$, so we might simply look at $\Pi(u-(1-\chi)u_{0}(1-\chi))\Pi$
to compute the trace: 
\begin{align*}
\langle\Tr u,\rho\rangle & =2\iint\Pi(x)(u(t,x,x)-(1-\chi(x))u_{0}(t,x,x)(1-\chi(x)))\Pi(x)\rho(t)dtdx.
\end{align*}
Two specific test functions $\rho$ will be used: the first one for
Theorem \ref{thm: MLPC} and the second one for Theorem \ref{thm: spectral invariant}.
We will choose them of the form 
\[
\rho(t)=\cos(\lambda t)\phi^{(i)}(t),\qquad i\in\{1,2\}
\]
where $\lambda>0$ will be large and $\phi^{(i)}$ is a real, compactly
supported function. To define $\phi^{(i)}$, let first $\phi\in C_{0}^{\infty}([-1,1])$
be a positive function, with $0\leq\phi\leq1=\phi(0)$. We will assume
that $\phi=1$ on $[-3/4,3/4]$ and define 
\begin{equation}
\phi^{(i)}(t)=\phi\left(\frac{t-b_{i}}{a_{i}}\right)\,,\quad a_{i},b_{i}>0,\quad i=1,2.\label{eq: Tests functions}
\end{equation}
For the first test function $\phi^{(1)}$, we will choose a parameter
$\beta>0$ such that for some $\epsilon>0$ small but fixed, we have
\begin{equation}
T=\epsilon\log\beta\leq\epsilon\log\lambda\quad\mbox{and}\quad\beta\leq\lambda\leq\beta+1.\label{eq: Relation beta lambda}
\end{equation}
Hence the two parameters $\lambda,\beta$ are not independent: in
practice, for the proof of Theorem \ref{thm: MLPC} we will choose
an arbitrary (large) value of $\beta>0$, and then adjust $\lambda$
accordingly, as it will be explained in Section \ref{sub:Counting-poles}.
The value of $a_{1}$ and $b_{1}$ will be chosen such that 
\begin{equation}
b_{1}\in]T-1,T[,\qquad a_{1}=\e^{-J_{+}T}=\beta^{-\epsilon J_{+}},\qquad T-1\leq b_{1}\pm a_{1}\leq T\label{eq: Choice param phi1}
\end{equation}
where $J_{+}>0$ is a fixed constant that depends only on $M$ and
$p$ that will be defined later in \eqref{eq: choice J +}. The precise
value of $b_{1}$ needed for Theorem \ref{thm: MLPC} will also be
defined in Section \ref{sub:Counting-poles}.

For the second test function $\phi^{(2)}$, we will only assume that
$T\leq\epsilon\log\lambda$ and and set 
\begin{equation}
b_{2}=T-\fr12,\qquad a_{2}=\fr12.\label{eq: Choice param phi2}
\end{equation}

For convenience, we will rather consider the unitary groups 
\[
U(t)=\e^{-\i tQ},\quad U_{0}(t)=\e^{-\i t\sqrt{-\Delta}}
\]
and work with the (half) wave equation written on the form $(\frac{1}{\i}\frac{\d}{\d t}+Q)f=0.$
The next proposition establishes a correspondence for the wave groups
$U$, $u$ and $u_{0}$. 
\begin{prop}
\label{prop: Trace Simplifiee}Let us write $U(t,x,y)$ for the Schwartz
kernel of $U(t)$. Then, 
\begin{align}
2\Tr\int(u(t)-u_{0}^{\chi}(t))\cos(\lambda t)\phi^{(i)}(t)dt & =\Re\iint\Pi(x)U(t,x,x)\Pi(x)\e^{\i\lambda t}\phi^{(i)}(t)dxdt\label{eq: Integral Trace}\\
 & +\cO(\lambda^{-\infty}),\nonumber 
\end{align}
where $u_{0}^{\chi}(t)\defeq(1-\chi)u_{0}(t)(1-\chi)$ and $\lambda\to+\infty$.
\end{prop}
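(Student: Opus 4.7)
The plan is to expand $2\cos(\lambda t) = e^{i\lambda t}+e^{-i\lambda t}$, exploit the identity $u(t,x,x) = \mathrm{Re}\,U(t,x,x)$ (coming from $U(-t)=U(t)^*$), decompose the trace into four pieces, and show that only the ``resonant'' piece pairing $U$ with $e^{i\lambda t}$ survives modulo $O(\lambda^{-\infty})$.

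First I would observe that the diagonal kernel of $U(-t) = U(t)^*$ is $\overline{U(t,x,x)}$, so $u(t,x,x) = \mathrm{Re}\,U(t,x,x)$ and $u_0(t,x,x) = \mathrm{Re}\,U_0(t,x,x)$. Writing the trace as an integral on the diagonal (legitimate because, after insertion of $\Pi$, the operator in \eqref{eq:Trace class} has smooth compactly supported kernel), the left-hand side of \eqref{eq: Integral Trace} becomes the sum of four real-part expressions: (A) $\mathrm{Re}\iint \Pi(x)^2 U(t,x,x) e^{i\lambda t}\phi^{(i)}(t)\,dxdt$; (B) $-\mathrm{Re}\iint \Pi(x)^2(1-\chi(x))^2 U_0(t,x,x)e^{i\lambda t}\phi^{(i)}\,dxdt$; (C) the analogue of (A) with $e^{-i\lambda t}$; and (D) the analogue of (B) with $e^{-i\lambda t}$. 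Piece (A) is exactly the right-hand side of \eqref{eq: Integral Trace}, so the task reduces to showing that (B), (C), (D) are all $O(\lambda^{-\infty})$.

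For (C), the functional calculus gives $\int U(t) e^{-i\lambda t}\phi^{(i)}(t)\,dt = \hat\phi^{(i)}(Q+\lambda)$. Since $Q\geq 0$ and $\hat\phi^{(i)}$ is Schwartz, for any $m,N$ one has $\sup_{E\geq 0}\lva (E+i)^m\hat\phi^{(i)}(E+\lambda)\rva = O(\lambda^{-N})$; hence $\lno (Q+i)^m \hat\phi^{(i)}(Q+\lambda)\rno_{\mathrm{op}} = O(\lambda^{-\infty})$. In the black-box framework, $\Pi(Q+i)^{-m}$ is trace class for $m$ large, and the decomposition $\Pi\hat\phi^{(i)}(Q+\lambda)\Pi = [\Pi(Q+i)^{-m}]\,[(Q+i)^m\hat\phi^{(i)}(Q+\lambda)]\,\Pi$ together with $\lno AB\rno_{\mathrm{tr}} \leq \lno A\rno_{\mathrm{tr}}\lno B\rno_{\mathrm{op}}$ yields $\lva \Tr[\Pi\hat\phi^{(i)}(Q+\lambda)\Pi]\rva = O(\lambda^{-\infty})$.

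For (B) and (D), the free-space diagonal kernel admits the explicit expression $U_0(t,x,x) = c_n(n-1)!(it)^{-n}$ for $t>0$, which is smooth in $t$ and constant in $x$. Since $\phi^{(i)}$ is supported in a compact subset of $(0,+\infty)$ (as $T\to\infty$, and in any case for $T\geq 1$), the function $t\mapsto U_0(t,x,x)\phi^{(i)}(t)$ is smooth and compactly supported, so its Fourier transform at $\pm\lambda$ is $O(\lambda^{-\infty})$; integrating against the compactly supported factor $\Pi(x)^2(1-\chi(x))^2$ preserves this decay. The main obstacle is the trace-class bookkeeping in (C), where the black-box resolvent estimate is needed to convert operator-norm decay into trace-class decay; the free-space pieces (B), (D) are essentially one-dimensional Fourier estimates on the explicit diagonal kernel.
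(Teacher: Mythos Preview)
Your argument is essentially correct and is more elementary than the paper's. The paper defers the proof to Section~\ref{sub: proof trace simple} and relies on the full semiclassical FIO machinery built for Proposition~\ref{prop: Long Time Trace Formula}: it argues that $\Tr\int\Pi f(hQ)U(t)\Pi\e^{-\i\lambda t}\phi^{(i)}(t)\,dt=\cO(\lambda^{-\infty})$ because the stationary-phase critical equations \eqref{eq: Critical points} cannot be satisfied when $t,\lambda>0$, and similarly that the cut-off free propagator contributes $\cO(\lambda^{-\infty})$ for \emph{both} signs since the free flow has no periodic orbits in $\supp\phi^{(i)}$. You bypass this machinery entirely: for piece (C) you use the spectral-calculus identity $\int U(t)\e^{-\i\lambda t}\phi^{(i)}(t)\,dt=\hat\phi^{(i)}(Q+\lambda)$ together with $Q\geq0$ and a trace-class factorization through $\Pi(Q+\i)^{-m}$, and for (B), (D) you exploit the explicit diagonal value $U_0(t,x,x)=c_n(\i t)^{-n}$, smooth on $\supp\phi^{(i)}\subset(0,\infty)$. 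The payoff of your route is that it is self-contained and does not presuppose the long-time FIO parametrix; the paper's route is economical once that parametrix is already in hand.

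One point deserves care. You write ``$\hat\phi^{(i)}$ is Schwartz'', but $\phi^{(i)}$ depends on $\lambda$: for $i=1$ the support has width $a_1=\lambda^{-\epsilon J_+}$, so $\|\partial_t^N\phi^{(1)}\|_{L^\infty}=\cO(\lambda^{N\epsilon J_+})$, and for $i=2$ the support lies near $T\leq\epsilon\log\lambda$. Your estimates still go through because $\hat\phi^{(1)}(\tau)=a_1\hat\phi(a_1\tau)\e^{-\i b_1\tau}$ gives $|\hat\phi^{(1)}(E+\lambda)|\leq C_N a_1(1+a_1(E+\lambda))^{-N}$, and since $a_1(E+\lambda)\geq a_1\lambda=\lambda^{1-\epsilon J_+}\to\infty$ one still obtains $\cO(\lambda^{-N(1-\epsilon J_+)})$ for every $N$; the polynomial weight $(E+\i)^m$ is handled the same way. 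Likewise in (B), (D) each integration by parts in $t$ gains $\lambda^{-1}$ while costing at most $\lambda^{\epsilon J_+}$ from derivatives of $\phi^{(1)}$. So the conclusion $\cO(\lambda^{-\infty})$ survives, but the uniformity in $\lambda$ should be tracked explicitly rather than invoked as ``Schwartz''.
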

The proof is postponed in Section \ref{sub: proof trace simple}.
Roughly speaking, in the limit $\lambda\to\infty$, it is sufficient
to consider the term involving $u$ in the trace, since when subtracting
the cut-off free wave group, the part involving $u_{0}$ does not
contribute significantly to the trace as $(1-\chi)u_{0}(t)(1-\chi)$
has a singular support in time reduced to 0. For convenience, in the
next section we will write 
\[
\langle\Tr U,\rho\rangle\defeq\iint\Pi(x)U(t,x,x)\Pi(x)\rho(t)dxdt
\]
and the above proposition simply states that 
\[
\scal{\Tr u(t)}{\cos(\lambda t)\phi^{(i)}(t)}=\Re\scal{\Tr U}{\e^{\i\lambda t}\phi^{(i)}(t)}+\cO(\lambda^{-\infty}).
\]

The study of the above traces in the limit $\lambda\to\infty$ is
more easily treated with semiclassical microlocal analysis, so we
present now a semiclassical formulation of the problem -- see \cite{JaPoTo07_low}
for an similar reformulation, and \cite{Zwo10_semiAMS} for a complete
introduction to semiclassical analysis. We note that if $h\defeq\lambda^{-1}$,
then $hQ=\sqrt{h^{2}P}$ and $hQ_{0}=\sqrt{-h^{2}\Delta}$ are classical
$h-$semiclassical operators of order 1. Our symbol classes for the
semiclassical calculus can now depend on $h$, and we will use the
following notation: 
\[
S_{\delta}^{m,k}(M)\defeq\{a\in C^{\infty}(T^{*}M\times]0,1]):\ |\d_{x}^{\alpha}\d_{\xi}^{\beta}a(x,\xi,h)|\leq C_{\alpha\beta}h^{k-\delta(|\alpha|+|\beta|)}\moy\xi^{m-|\beta|}\}.
\]
The assumption that the subprincipal symbol of $P$ vanishes implies
that 
\[
hQ=\Op_{h}(\sqrt{p})+h^{2}\Op_{h}(q_{2}),\qquad q_{2}\in S_{0}^{-1,0}(M),
\]
where $\Op_{h}(\cdot)$ denotes the Weyl quantization. In the following
we will write $q=\sqrt{p}$ for the principal symbol of $Q$. As a
result, the wave groups 
\[
u(t)=u_{h}(t)=\cos(h^{-1}t\sqrt{h^{2}P}),\quad u_{0,h}(t)=\cos(h^{-1}t\sqrt{-h^{2}\Delta})
\]
as well as $U(t)$ and $U_{0}(t)$ can be seen as semiclassical Fourier
integral operators (albeit with trivial dependance on $h$), and we
are lead to conduct the semiclassical analysis of the oscillatory
integral 
\[
\iint\Pi(x)(u_{h}(t,x,x)-(1-\chi(x))u_{0,h}(t,x,x,)(1-\chi(x))\Pi(x)\cos(h^{-1}t)\phi^{(i)}(t)dxdt\,,\quad h\to0.
\]

We conclude this section with the microlocalisation of the trace near
$\cE^{*}M$. From the fact that the principal symbol of $Q$ and $Q_{0}$
is homogeneous of degree 1, it is easy to see (\cite{DuGu75}, Section
1) that the operators 
\[
\int\Pi\e^{-\frac{\i}{h}t(hQ-1)}\Pi\phi^{(i)}(t)dt\quad\mbox{and}\quad\int\Pi(1-\chi)\e^{-\frac{\i}{h}t(hQ_{0}-1)}(1-\chi)\Pi\phi^{(i)}(t)dt
\]
are microlocalized in phase space around $\cE^{*}M$. As a result,
if $f\in C_{0}^{\infty}(\R)$ is supported in an interval of the form
$[1-\delta/2,1+\delta/2]$ for some $\delta>0$, and is identically
equal to 1 near 1, then we have 
\begin{multline*}
\Tr\lp\int\Pi f(hQ)(U(t)-(1-\chi)U_{0}(t)(1-\chi))\Pi\e^{\i\frac{t}{h}}\phi^{(i)}(t)dt\rp\\
=\Tr\lp\int\Pi(U(t)-(1-\chi)U_{0}(t)(1-\chi))\Pi\e^{\i\frac{t}{h}}\phi^{(i)}(t)dt\rp+\cO(h^{\infty}).
\end{multline*}
Here the remainder is actually $\cO_{N,\delta}(h^{N(1-\epsilon J_{+})})$
for all $N\in\N$, due to the fact that 
\[
\lno\frac{d^{N}}{dt^{N}}\phi^{(1)}(t)\rno_{L^{\infty}}=\cO(\lambda^{N\epsilon J_{+}}),\qquad\lno\frac{d^{N}}{dt^{N}}\phi^{(2)}(t)\rno_{L^{\infty}}=\cO(1).
\]

\subsection{Long-time Trace formula}

To prove Theorems \ref{thm: MLPC} and \ref{thm: spectral invariant},
our main tool is a long-time trace formula, applied to a well-chosen
test function. This formula is simply a generalization of the Duistermaat-Guillemin
trace formula for test functions with support that can vary with $\lambda\to+\infty$,
in a non-compact setting. 
\begin{prop}
\label{prop: Long Time Trace Formula} Denote by $\scP$ the set of
periodic bicharacteristics of the flow $\Phi^{t}:\Gamma\to\Gamma$,
and for $\gamma\in\scP$, call $\ell^{\sharp}(\gamma)$ its primitive
length and $P_{\gamma}$ its Poincaré map. Let $\phi^{(i)}$, $i\in\{1,2\}$
be as in \eqref{eq: Tests functions} with $a_{i},b_{i}$ satisfying
\eqref{eq: Choice param phi1}, \eqref{eq: Choice param phi2}. There
is $\ti\epsilon>0$ and $C=C_{M,p,\phi}>0$ such that for any $\epsilon\leq\ti\epsilon$,
we have the following expansion when $\lambda\to+\infty$: 
\begin{equation}
\moy{\Tr U,\e^{\i t\lambda}\phi^{(i)}}=\sum_{\gamma\in\scP}\e^{\i\lambda\ell(\gamma)}\frac{\ell^{\sharp}(\gamma)}{\sqrt{|1-P_{\gamma}|}}\phi^{(i)}(\ell(\gamma))+\cO_{M,p}(\lambda^{-\mu})\,,\quad\mu=1-C\epsilon>0.\label{eq: Trace W}
\end{equation}

\end{prop}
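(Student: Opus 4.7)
The plan is to adapt the Duistermaat--Guillemin trace formula to the present semiclassical, long-time setting. Setting $h=\lambda^{-1}$, the operator $hQ$ is a classical semiclassical pseudodifferential operator of order $1$ with principal symbol $q=\sqrt{p}$ and vanishing subprincipal part. By the microlocalisation argument at the end of Section \ref{sub:Semiclassical formulation}, one may insert a cut-off $f(hQ)$ supported near energy $1$, reducing the quantity
$$\moy{\Tr U}{\e^{\i\lambda t}\phi^{(i)}} = \iint \Pi(x)\, U(t,x,x)\, \Pi(x)\, \e^{\i t/h}\, \phi^{(i)}(t)\, dx\, dt$$
to an oscillatory integral supported microlocally in a fixed neighbourhood of $\cE^*M$, modulo a remainder $\cO_N(h^{N(1-\epsilon J_+)})$ for every $N$.

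\emph{Step 1.} Construct a semiclassical WKB representation of $U(t)=\e^{-\i tQ/h}$ on the time window $|t|\leq T$ with $T=\epsilon\log(1/h)$, of the form
$$U(t,x,y) = \frac{1}{(2\pi h)^n} \int a(t,x,y,\xi;h)\, \e^{\i(\varphi(t,x,\xi) - y\cdot\xi)/h}\, d\xi,$$
where $\varphi$ solves the Hamilton--Jacobi equation $\d_t\varphi + q(x,\d_x\varphi) = 0$ with $\varphi|_{t=0}=x\cdot\xi$, and the amplitudes satisfy transport equations along $\Phi^t$. Because the derivatives of $\Phi^t$ on a compact neighbourhood of $\Gamma$ are bounded by $\e^{J_+|t|}$, both $\varphi$ and the $a_j$ together with all their derivatives remain under control provided $\epsilon J_+$ is sufficiently small.

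\emph{Step 2.} Plug this parametrix into the trace; after setting $y=x$, the resulting phase
$$\Psi(t,x,\xi) = t + \varphi(t,x,\xi) - x\cdot\xi$$
has critical set $\{\d_t\Psi = 1 - q = 0,\ \d_x\Psi = \d_\xi\Psi = 0\} = \{(t,\rho):\ q(\rho)=1,\ \Phi^t(\rho)=\rho\}$, which is precisely the union of periodic orbits of $\Phi^t$ in $\cE^*M$ whose length lies in $\supp\phi^{(i)}$. Non-degenerate stationary phase transverse to each orbit, combined with integration along it, produces the Duistermaat--Guillemin contribution $\e^{\i\lambda\ell(\gamma)}\ell^{\sharp}(\gamma)\,|\det(I-P_\gamma)|^{-1/2}\phi^{(i)}(\ell(\gamma))$; hyperbolicity of $\Phi^t|_\Gamma$ guarantees the non-degeneracy of the transverse Hessian $I-P_\gamma$.

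\textbf{Main obstacle.} The delicate point is a uniform control of the remainder as $T=\epsilon\log(1/h)\to\infty$. Standard stationary phase gives the leading $h^{1/2}$-gain per transverse direction, with subleading corrections controlled by derivatives of the amplitude and of $\phi^{(i)}$. For $\phi^{(1)}$ one has $\|\d^N\phi^{(1)}\|_\infty = \cO(a_1^{-N}) = \cO(h^{-N\epsilon J_+})$, while derivatives of the amplitude grow like $\e^{CJ_+T} = h^{-\epsilon C J_+}$; the number of orbits with length in $\supp\phi^{(i)}$ is bounded by $\cO(\e^{h_{\top} T}) = \cO(h^{-\epsilon h_{\top}})$, and the Poincaré map factor $|\det(I-P_\gamma)|^{-1/2}$ is exponentially small in $\ell(\gamma)$ by hyperbolicity. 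Balancing these competing exponential rates along the stationary phase expansion yields a total remainder $\cO_{M,p}(h^{1-\epsilon C})$ for some constant $C=C_{M,p,\phi}$ independent of $\epsilon$. Choosing $\ti\epsilon\leq 1/C$ and any $\epsilon\leq\ti\epsilon$ produces the claimed exponent $\mu=1-C\epsilon>0$, and the same analysis applied to $\phi^{(2)}$ (whose derivatives are $\cO(1)$) gives an even better remainder.
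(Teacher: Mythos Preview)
Your overall strategy---WKB parametrix plus stationary phase---is of course the right one, and the identification of the critical set with periodic orbits is correct. But Step~1 as written contains a genuine gap. You claim a single generating function $\varphi(t,x,\xi)$ solving Hamilton--Jacobi with $\varphi|_{t=0}=x\cdot\xi$ and valid for all $|t|\leq T=\epsilon\log(1/h)$. Such a global phase does not exist in general: the Hamilton--Jacobi equation develops caustics after a fixed finite time (of order the injectivity radius in the metric case), and the projection $(y,\eta;x,\xi)\mapsto(x,\eta)$ from $\Graph\Phi^t$ ceases to be a diffeomorphism. The paper handles this by a quantitative local-inverse argument (Proposition~\ref{prop: Local projection}) showing that on balls of radius $h^{\epsilon L_0}$ one has a generating function valid for a time window of length $h^{\epsilon L_0}$, and then builds a \emph{chain} of $K=\cO(h^{-\epsilon L_0}\log h^{-1})$ local FIO representations $U_k(t)$ along each orbit, with explicit transitions $\varphi^{k-1}\to\varphi^k$ carried out via H\"ormander's phase-change procedure (Proposition~\ref{prop: The big one}). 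The remainder analysis then requires controlling the accumulated errors $\cS_{k-1}^N,\cZ_k^N,\cR_k^N$ across all $K$ transitions, which is where the symbol estimates of Lemma~\ref{lem: Symbol Derivatives} enter; those in turn rely on the Nonnenmacher--Zworski adapted coordinates and the uniform bound on $\|\partial y/\partial y(t)\|$ from Proposition~\ref{prop: NZ 5}, not on the crude exponential growth $\e^{CJ_+T}$ you invoke.

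A second point you omit entirely is the Maslov phase. Each transition contributes a factor $\e^{\frac{\i\pi}{4}(\sgn\d_\eta^2\varphi^{k-1}-\sgn\d_\eta^2\varphi^k)}$, and the stationary-phase step adds $\sgn\Hess\psi|_{W^\perp}$; the paper devotes a separate lemma to showing that their sum, the Maslov index $\mu(\gamma)$, vanishes precisely because hyperbolicity keeps the evolved horizontal Lagrangians transverse to the fibre (via Proposition~\ref{prop: NZ 5} again). Without this, the leading term would carry an unspecified factor $\i^{\mu(\gamma)}$ and the formula would not match~\eqref{eq: Trace W}. Finally, the paper also needs the separation Lemma~\ref{lem: Separation Orbits} to isolate periodic orbits microlocally at scale $h^{\epsilon L_0}$ before applying stationary phase; your sketch does not explain how distinct orbits of nearly equal length are kept from interfering.
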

The particularity of this trace formula, albeit fairly classical,
is twofolds: the number of periodic orbits that are considered can
be large, for instance it can grow exponentially when $i=2$, and
their length diverge with $\lambda\to+\infty$. Also, the remainder
in $\lambda$ is precisely controlled with respect to $\epsilon$.
In \cite{JaPoTo07_low}, a similar formula is derived for the Laplacian
on a compact surface with negative curvature.

\section{Hyperbolic and Axiom-A Hamiltonian flows\label{sec: Anosov}}

Hyperbolic dynamical systems have been extensively studied in the
last decades, see \cite{KaHa95} for a comprehensive introduction
and many additional properties of such systems. Our main assumption
in this article is that 
\[
\Phi^{t}:\cE^{*}M\to\cE^{*}M
\]
is hyperbolic when restricted to the trapped set $\Gamma$. The set
$\Gamma$ is a compact space, invariant under the flow. By definition
of hyperbolicity, for any $\rho\in\Gamma$, the tangent space $T_{\rho}\cE^{*}M$
splits into \emph{flow}, \emph{stable} and \emph{unstable} subspaces
\[
T_{\rho}\cE^{*}M=\R H_{q}\oplus E_{\rho}^{s}\oplus E_{\rho}^{u}\,.
\]
The spaces $E_{\rho}^{s}$ and $E_{\rho}^{u}$ are $n-1$ dimensional,
and are preserved under the flow map: 
\[
\forall t\in\R,\ \ d\Phi_{\rho}^{t}(E_{\rho}^{s})=E_{\Phi^{t}(\rho)}^{s},\quad d\Phi_{\rho}^{t}(E_{\rho}^{u})=E_{\Phi^{t}(\rho)}^{u}.
\]
Moreover, there exist $\mu,C>0$ such that on $\cE^{*}M$, 
\begin{eqnarray}
i) &  & \|d\Phi_{\rho}^{t}(v)\|\leq C\e^{-\mu t}\|v\|,\ \mbox{\ for\ all\ }v\in E_{\rho}^{s},\ t\geq0\nonumber \\
ii) &  & \|d\Phi_{\rho}^{-t}(v)\|\leq C\e^{-\mu t}\|v\|,\mbox{\ \ for\ all\ }v\in E_{\rho}^{u},\ t\geq0.\label{eq:Unstable}
\end{eqnarray}
There is a metric near $\Gamma$, called the \emph{adapted metric,
}such that one can take $C=1$ in the above equations. This metric
can be extended to the total energy layer $\cE^{*}M$ in a way that
it coincides with the standard Euclidian metric outside $T^{*}M_{1}$.

The adapted metric on $\cE^{*}M$ induces a volume form $\Omega_{\rho}$
on any $n-1$ dimensional subspace of $T(\cE_{\rho}^{*}M)$. Using
$\Omega_{\rho}$, we can define the unstable Jacobian at $\rho$ for
time $t$. We set 
\[
\det d\Phi^{t}|_{E_{\rho}^{u}\to E_{\Phi^{t}(\rho)}^{u}}=\frac{\Omega_{\Phi^{t}(\rho)}(d\Phi^{t}v_{1}\wedge\dots\wedge d\Phi^{t}v_{n-1})}{\Omega_{\rho}(v_{1}\wedge\dots\wedge v_{n-1})}\,,
\]
where $(v_{1},\dots,v_{n-1})$ can be any basis of $E_{\rho}^{u}$.
The infinitesimal unstable Jacobian is defined by

\begin{equation}
J^{u}(\rho)=\left.\frac{d}{dt}\right|_{t=0}\log\det d\Phi_{\rho}^{t}|_{E_{\rho}^{u}},\quad\mbox{where\ }d\Phi_{\rho}^{t}:E_{\rho}^{u}\to E_{\Phi^{t}(\rho)}^{u}.\label{eq: Def J u}
\end{equation}
The unstable Jacobian at $\rho\in T^{*}M$ will be denoted by 
\[
\e^{\lambda_{t}^{+}(\rho)}=\det d\Phi_{\rho}^{t}|_{E_{\rho}^{u}}=\e^{\int_{0}^{t}J^{u}(\Phi^{s}(\rho))ds}\xrightarrow{t\to+\infty}+\infty.
\]
The flow $\Phi^{t}:\cE^{*}M\to\cE^{*}M$ is said to be \emph{Axiom-A
}if periodic orbits are dense in $\Gamma$. We recall that hyperbolic
sets are structurally stable, namely there is $\delta>0$ such that
$\forall E\in[1-\delta,1+\delta]$, $\Gamma_{E}$ is a hyperbolic
set for $\Phi^{t}|_{p^{-1}(E)}:\Gamma_{E}\to\Gamma_{E}$. Hence in
the thickened unit energy layer 
\[
\cE^{*}M^{\delta}\defeq\{\rho\in T^{*}M:\ |q(\rho)-1|\leq\delta\},
\]
the dynamics is uniformly hyperbolic \cite{KaHa95}, provided $\delta$
is sufficiently small.

Let $d$ be the distance function associated with the adapted metric.
This will lead to consider the open balls around $\rho_{0}\in\cE^{*}M$
defined by $B_{\rho_{0}}(\eps)\defeq\{\rho\in T^{*}M:d(\rho,\rho_{0})\leq\eps\}$,
where the distance is measured with the adapted metric and $\eps>0$
is small enough so that $B_{\rho_{0}}(\eps)\subset\cE^{*}M^{\delta}$.
We end this paragraph by noting that there are some constants $\eps_{0},C_{+},K_{+}>0$
depending only on $p$ and $M$ such that for any $\rho_{0}\in\cE^{*}M$
and $\eps<\eps_{0}$ we have 
\begin{equation}
\diam\Phi^{t}(B_{\rho_{0}}(\eps))\leq C_{+}\e^{K_{+}t}\eps\label{eq: Growth Bowen}
\end{equation}
where the distance is again measured with the adapted metric.

\subsection{The topological pressure for Axiom A flows\label{sub: Pressure}}

Let $f\in C^{0}(T^{*}M)$. For every $\eps>0$ and $T>0$, a set $E\subset\Gamma\subset T^{*}M$
is $(\eps,T)$ separated if for every $x,y\in E$ with $x\neq y$,
$d(\Phi^{t}(x),\Phi^{t}(y))>\eps$ for some $t\in[0,T]$. Since $\Gamma$
is compact, the cardinal of such a set is always bounded, but it may
grow exponentially with $T$. Set 
\[
Z_{T}(\Phi,f,\eps)=\sup\left(\sum_{x\in E}\int_{0}^{T}f(\Phi^{s}(x))ds\right)
\]
where the supremum is taken over all $(\eps,T)$ separated subsets
of $\Gamma$. The topological pressure of the flow $\Phi^{t}:\Gamma\to\Gamma$,
with respect to the function $f$ is defined by 
\begin{equation}
\Pr_{\Gamma}(f)=\lim_{\eps\to0}\limsup_{T\to\infty}\frac{1}{T}\log Z_{T}(\Phi,f,\eps).\label{eq: def pressure}
\end{equation}
If the flow $\Phi^{t}$ is Axiom A on $\Gamma$, it is possible to
express the topological pressure by using the periodic orbits (see
\cite{KaHa95}, Chapter 18 and 20). More precisely, 
\[
\Pr_{\Gamma}(f)=\lim_{T\to\infty}\frac{1}{T}\log\sum_{\gamma\in\scP:\ell(\gamma)\leq T}\e^{\int_{\gamma}f},\qquad\Phi^{t}:\Gamma\to\Gamma\mbox{\ is Axiom A}.
\]
If $\Pr_{\Gamma}(f)>0$, then one has the even more precise asymptotics
\cite{Bow72_per}: 
\[
\sum_{\gamma\in\scP:\ell(\gamma)\leq T}\e^{\int_{\gamma}f}=\frac{\e^{T\Pr_{\Gamma}(f)}}{\Pr_{\Gamma}(f)}(1+o(1))\,,\qquad T\to+\infty.
\]

\section{Counting scattering poles \label{sub:Counting-poles}}

This entire section is devoted to the proof of Theorem 1, assuming
Proposition \ref{prop: Long Time Trace Formula}. The trace formula
\eqref{eq: Trace W} can be used to derive a lower bound on the number
of scattering poles near the real axis. This approach is the main
method available to obtain lower bounds in resonance counting problems.
However, the dependence on $\beta$ and $\lambda$ in the test function
here is not standard as we will potentially deal with many periodic
orbits of diverging length when these parameters tend to $+\infty$.
Through this section, we then assume that the length spectrum of $\Phi^{t}$
is minimally separated.

\subsection{Lower bound for the trace formula up to Ehrenfest time.}

Let $h_{\top}\geq0$ denote the topological entropy of the flow, and
define 
\begin{equation}
\Theta_{+}^{u}=\limsup_{t\to\infty}\frac{1}{2t}\log\sup_{\rho\in\Gamma\subset\cE^{*}M}\det\lva1-d\Phi^{t}|_{E_{\rho}^{u}}\rva>0.\label{eq: J u +}
\end{equation}
Let $\nu$ be as in Definition \ref{def: minimal sep} and choose
a positive constant $J_{+}$ such that 
\begin{equation}
J_{+}>\max(\Theta_{+}^{u},h_{\top},\nu)>0.\label{eq: choice J +}
\end{equation}
Now fix $\epsilon>0$ small enough so that Proposition \ref{prop: Long Time Trace Formula}
holds with a function $\phi^{(1)}$ as in \eqref{eq: Tests functions},
with $a_{1}=\beta^{-\epsilon J_{+}}$ and $b_{1}\in]T-1,T[$ to be
defined. Take then 
\[
C_{0}\defeq2\epsilon^{-1},\quad\mbox{hence}\quad T=\epsilon\log\beta\Rightarrow\e^{-C_{0}T}=\beta^{-2}.
\]
Since for Theorem \ref{thm: MLPC} we assumed that $\scL$ is minimally
separated, for $\beta$ large enough we can always find an interval
$I\subset[T-1,T]$ such that 
\[
I=I_{-1}\sqcup I_{0}\sqcup I_{1}
\]
with $I_{-1},I_{1}$ open of size $\e^{-J_{+}T}$, $I_{\pm1}\cap\scL=\emptyset$,
$I_{0}=[\ell_{0},\ell_{1}]$ with $\ell_{0},\ell_{1}\in\scL$ and
either 
\[
\ell_{0}=\ell_{1}\quad\mbox{or}\quad\ell_{1}-\ell_{0}\leq\beta^{-2}.
\]

Let now set $b_{1}=\ell_{0}$. It is then clear that the test function
$\phi^{(1)}$ defined as in \eqref{eq: Tests functions} is supported
on $I$, and it satisfies $\phi^{(1)}(\ell_{0})=1$. The next proposition
gives an estimate on the right hand side of \eqref{eq: Trace W},
and to keep in mind that $\phi^{(1)}$ depends on the parameter $\beta$
we write $\phi_{\beta}=\phi^{(1)}$ in the following. 
\begin{prop}
\label{prop: Lower bound trace} Let $\phi_{\beta}=\phi^{(1)}$ be
as in \eqref{eq: Tests functions} with $b_{1},a_{1}$ as above. Then
there are $c_{0},\beta_{0}>0$ such that for any $\beta\geq\beta_{0}$
and $\epsilon$ sufficiently small, there is a choice of the parameter
$\lambda$ such that \eqref{eq: Relation beta lambda} holds true
and furthermore, 
\[
\langle\Tr u(t),\cos(\lambda t)\phi_{\beta}(t)\rangle\geq c_{0}\lambda^{-\epsilon\Theta_{+}^{u}}
\]
where $\Theta_{+}^{u}>0$ is given in \eqref{eq: J u +}. The constant
$c_{0}$ depends on $M$ and $p$. \end{prop}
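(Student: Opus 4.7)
The plan is to combine Proposition \ref{prop: Trace Simplifiee} with the long-time trace formula of Proposition \ref{prop: Long Time Trace Formula}, so that
\[
\scal{\Tr u(t)}{\cos(\lambda t)\phi_{\beta}(t)} = \Re \sum_{\gamma \in \scP} \e^{\i\lambda\ell(\gamma)}\frac{\ell^{\sharp}(\gamma)}{\sqrt{|1-P_{\gamma}|}}\phi_{\beta}(\ell(\gamma)) + \cO(\lambda^{-\mu}),
\]
with $\mu = 1-C\epsilon$. The first task is to identify the surviving orbits. The support of $\phi_{\beta}$ is $[\ell_{0}-\beta^{-\epsilon J_{+}},\ell_{0}+\beta^{-\epsilon J_{+}}]$; by the minimal separation assumption (with $C_{0}=2\epsilon^{-1}$), the gaps $I_{\pm1}$ have width $\geq\e^{-\nu T}>\e^{-J_{+}T}=\beta^{-\epsilon J_{+}}$ since $J_{+}>\nu$, so the support cannot cross them and only periods in $I_{0}=[\ell_{0},\ell_{1}]$ contribute. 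Moreover $|I_{0}|\leq\beta^{-2}\leq\tfrac{3}{4}\beta^{-\epsilon J_{+}}$ for $\epsilon$ small, so $I_{0}$ lies in the plateau where $\phi_{\beta}\equiv 1$.

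Next I would lower bound each surviving coefficient. Using $\det(1-P_{\gamma})=\det(1-d\Phi^{\ell}|_{E^{u}})\det(1-d\Phi^{\ell}|_{E^{s}})$, the stable factor converges to $1$ as $\ell(\gamma)\to\infty$ (its eigenvalues are exponentially small), and the definition \eqref{eq: J u +} of $\Theta_{+}^{u}$ yields
\[
\sqrt{|\det(1-P_{\gamma})|}\leq C\,\e^{(\Theta_{+}^{u}+o(1))\ell(\gamma)}\leq C'\,\e^{\Theta_{+}^{u}T}.
\]
Combined with the uniform lower bound $\ell^{\sharp}(\gamma)\geq\ell_{\min}^{\sharp}>0$ on primitive periods, each term of the sum has modulus at least $c_{1}\,\e^{-\Theta_{+}^{u}T}=c_{1}\,\beta^{-\epsilon\Theta_{+}^{u}}$ for some $c_{1}>0$.

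To turn this into a real-valued lower bound, I would exploit the freedom in choosing $\lambda\in[\beta,\beta+1]$. Since $\ell_{0}\sim T=\epsilon\log\beta\gg 2\pi$, the map $\lambda\mapsto\lambda\ell_{0}\bmod 2\pi$ traverses the circle many times over $[\beta,\beta+1]$, so one may pick $\lambda$ with $\cos(\lambda\ell_{0})=1$. For any other $\ell_{i}\in I_{0}$, the phase difference is
\[
|\lambda(\ell_{i}-\ell_{0})|\leq(\beta+1)\beta^{-2}\leq 2\beta^{-1},
\]
so $\cos(\lambda\ell_{i})\geq 1/2$ for $\beta$ large. Since the coefficients $\ell^{\sharp}(\gamma)/\sqrt{|1-P_{\gamma}|}$ are positive, the real part is at least $\tfrac{c_{1}}{2}\beta^{-\epsilon\Theta_{+}^{u}}\gtrsim\lambda^{-\epsilon\Theta_{+}^{u}}$ (using $\lambda\sim\beta$ for large $\beta$).

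Choosing $\epsilon$ small enough so that $\epsilon(\Theta_{+}^{u}+C)<1$, i.e.\ $\epsilon\Theta_{+}^{u}<\mu$, makes the $\cO(\lambda^{-\mu})$ remainder negligible compared to the main term, which gives the claim with $c_{0}>0$. The main obstacle is the double role played by $\lambda$: it is tied to $\beta$ through \eqref{eq: Relation beta lambda} yet must be selected to line up the phases $\lambda\ell_{i}$ of a whole cluster of periods. The minimal separation assumption with $C_{0}=2/\epsilon$ is precisely what guarantees that the cluster width $\beta^{-2}$ is small enough relative to the admissible window $[\beta,\beta+1]$ for such a simultaneous alignment to be possible, while at the same time the gaps $I_{\pm 1}$ remain wider than $a_{1}$, preventing unwanted periods from entering the support of $\phi_{\beta}$.
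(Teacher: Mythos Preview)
Your proof is correct and follows essentially the same route as the paper: reduce via Proposition~\ref{prop: Trace Simplifiee} to the real part of the long-time trace formula, use the minimal separation assumption with $C_{0}=2/\epsilon$ to confine the contributing periods to a cluster of width $\le\beta^{-2}$, pick $\lambda\in[\beta,\beta+1]$ so that $\cos(\lambda\ell_{0})=1$ (hence $\cos(\lambda\ell_{i})\ge 1/2$ on the whole cluster), lower bound each amplitude via the definition of $\Theta_{+}^{u}$, and absorb the $\cO(\lambda^{-\mu})$ remainder by taking $\epsilon$ small. The only cosmetic difference is that the paper keeps just the orbits with $\ell(\gamma)=\ell_{0}$ while you retain the whole cluster sitting on the plateau of $\phi_{\beta}$; both give the same lower bound.
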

\begin{proof}
From Proposition \ref{prop: Trace Simplifiee} it is enough to estimate
$\Re\scal{\Tr U}{\e^{\i\lambda t}\phi_{\beta}}$. Applying the trace
formula to the test function $\e^{\i\lambda t}\phi_{\beta}(t)$ and
taking the real part of both sides yields to

\begin{equation}
\Re\langle\Tr U,\e^{\i\lambda t}\phi_{\beta}\rangle=\sum_{\gamma\in\scP}\cos(\lambda\ell(\gamma))\frac{\ell^{\sharp}(\gamma)}{\sqrt{|1-P_{\gamma}|}}\phi_{\beta}(\ell(\gamma))+\cO_{M,p,\phi}(\lambda^{-\mu}).\label{eq:Trace real part}
\end{equation}
The difficulty to get lower bounds from this formula usually come
from the highly oscillatory terms $\cos(\lambda\ell(\gamma))$. However
from our choice of $a_{1}$ and $b_{1}$, the number of periods in
the support of $\phi_{\beta}$ is either one, or the periods spread
out over a size $\leq\beta^{-2}$ near $\ell_{0}$. It follows that
once $\beta$ is chosen, the value of $\ell_{0}$ is determined and
we can easily adjust $\lambda=\lambda(\beta)$ such that for $\beta$
sufficiently large, then 
\begin{equation}
\cos(\lambda\ell_{0})=1,\quad\lambda,\beta\mbox{\ satisfy \eqref{eq: Relation beta lambda}}.\label{eq: lower bound cosine}
\end{equation}
Note that if $\{\ell\in\scL\cap\supp\phi_{\beta}\}=\ell_{0}\leq\dots\leq\ell_{k}$
and $k>0$, then for $0<i\leq k$ we have 
\[
|\cos(\lambda\ell_{i})-\cos(\lambda\ell_{0})|\leq\lambda\beta^{-2}<\frac{1}{2},\qquad\beta\ \mbox{large enough}.
\]
From this we conclude that $\forall\ell\in\scL\cap\supp\phi_{\beta}$
we have 
\[
\cos(\lambda\ell)\geq\frac{1}{2}.
\]
\emph{From now on, and until the end of Section \ref{sub:Counting-poles},
we work with this precise value of $\lambda=\lambda(\beta)$. }

We then deduce from \eqref{eq:Trace real part} that 
\[
\Re\langle\Tr U,\e^{\i\lambda t}\phi_{\beta}\rangle\geq\frac{1}{2}\sum_{\ell(\gamma)\in\supp\phi_{\beta}}\frac{\ell^{\sharp}(\gamma)}{\sqrt{|1-P_{\gamma}|}}\phi_{\beta}(\ell(\gamma))+\cO_{M,p,\phi}(\lambda^{-\mu}).
\]
Let us analyze further the right hand side when $\beta,\lambda\to\infty$.
By the definition of $\Theta_{+}^{u}$ in \eqref{eq: choice J +},
we have that 
\[
\sqrt{|1-P_{\gamma}|}=\lva\det(1-d\Phi^{\ell(\gamma)}|_{E^{s}})\det(1-d\Phi^{\ell(\gamma)}|_{E^{u}})\rva^{\fr12}<\frac{1}{c_{0}}\e^{T\Theta_{+}^{u}}
\]
for some $c_{0}>0$. Since $\phi_{\beta}>0$, $\phi_{\beta}=1$ near
$\ell_{0}$ and recalling that $T\leq\epsilon\log\lambda$, we can
keep only the orbits such that $\ell(\gamma)=\ell_{0}$ to get 
\begin{align*}
\sum_{\ell(\gamma)\in\supp\phi_{\beta}}\frac{\ell^{\sharp}(\gamma)}{\sqrt{|1-P_{\gamma}|}}\phi_{\beta}(\ell(\gamma)) & \geq c_{0}\sharp\{\gamma:\ell(\gamma)=\ell_{0}\}\lambda^{-\epsilon\Theta_{+}^{u}}.
\end{align*}
Now, from proposition \ref{prop: Long Time Trace Formula} we see
that the term $\cO_{M,p,\phi}(\lambda^{-\mu})$ is $o(\lambda^{-\epsilon\Theta_{+}^{u}})$
if $\epsilon$ is sufficiently small and can then be absorbed in the
constant $c_{0}$, which concludes the proof. 
\end{proof}

\subsection{Proof of Theorem \ref{thm: MLPC}: $n$ odd.}

In this section we still write $\phi_{\beta}=\phi^{(1)}$. If $n$
is odd, then applying the distributional trace $\Tr u$ to $\cos(\lambda t)\phi_{\beta}(t)$,
we get from \eqref{eq: Poisson odd}: 
\begin{align*}
\moy{\Tr u,\cos(\lambda t)\phi_{\beta}(t)} & =\langle\sum_{\lambda_{j}\in\scR_{M}(P)}\e^{-{\it \lambda_{j}}t},\cos(\lambda t)\phi_{\beta}(t)\rangle\\
 & =\sum_{\lambda_{j}\in\scR_{M}(P)}\sum_{\iota=\pm1}a_{1}\hat{\phi}(a_{1}(\lambda_{j}+\iota\lambda))\e^{-\i b_{1}(\lambda_{j}+\iota\lambda)}\\
 & =\sum_{\lambda_{j}\in\scR_{M}(P)}\sum_{\iota=\pm1}\hat{\phi}_{\beta}(\lambda_{j}+\iota\lambda)\,,
\end{align*}
where 
\[
\hat{\phi}_{\beta}(\tau)\defeq a_{1}\hat{\phi}(a_{1}\tau)\e^{-\i b_{1}\tau}.
\]
We are now in position to prove our main result. The Paley-Wiener
estimate for $\hat{\phi}_{\beta}$ reads: 
\begin{align*}
|\hat{\phi}_{\beta}(\tau)| & \leq C_{K}a_{1}\e^{(b_{1}-a_{1})\Im\tau}(1+|a_{1}\tau|)^{-K}\,,\quad\mbox{for any }K>0,\ \Im\tau\leq0\,.
\end{align*}
Recall that $a_{1}=\beta^{-\epsilon J_{+}}$ and $b\geq\epsilon\log\beta-1$.
This implies that for any $K>0$ there is $C_{K}>0$ such that for
$\beta$ large enough, 
\[
|\hat{\phi}_{\beta}(\tau)|\leq C_{K}a_{1}\e^{\frac{\epsilon}{2}\log\beta\Im\tau}\frac{1}{(1+a_{1}|\tau|)^{K}}.
\]
Let $s>0$, and define 
\[
\cB_{s}\defeq\{z\in\C:-s\leq\Im z\leq0\}\,.
\]
We rewrite the sum over $j$ as 
\begin{align*}
\sum_{\lambda_{j}\in\scR_{M}(P)}\hat{\phi}_{\beta}(\lambda_{j}-\lambda)+\hat{\phi}_{\beta}(\lambda_{j}+\lambda) & =\sum_{\lambda_{j}\in\cB_{s}}\sum_{\iota=\pm1}\hat{\phi}_{\beta}(\lambda_{j}+\iota\lambda)+\sum_{\lambda_{j}\notin\cB_{s}}\sum_{\iota=\pm1}\hat{\phi}_{\beta}(\lambda_{j}+\iota\lambda)\,.
\end{align*}
For the deep resonances outside the strip $\cB_{s}$, we use the Paley-Wiener
estimate to write, for $\beta>0$ large enough, 
\begin{align*}
|\hat{\phi}_{\beta}(\lambda_{j}\pm\lambda)| & \leq C_{K,\epsilon,s}\beta^{-J_{+}\epsilon}\e^{-\frac{\epsilon}{2}|\Im(\lambda_{j})|\log\beta}\lp1+\beta^{-\epsilon J_{+}}|\lambda_{j}\pm\lambda|\rp^{-K}\\
 & \leq C_{K,\epsilon,s}\beta^{-J_{+}\epsilon}\e^{-\frac{\epsilon}{2}|\Im(\lambda_{j})|\log\beta}\lp1+\beta^{-J_{+}\epsilon}\lva|\lambda_{j}|-\lambda\rva\rp^{-K}.
\end{align*}
Denote now the counting function in the lower half-plane by 
\[
N(\lambda)\defeq\sharp\{\lambda_{j}\in\scR_{M}(P):\ |\lambda_{j}|\leq\lambda\}
\]
and introduce the counting measure 
\[
dN(t)=\sum_{\lambda_{j}\in\scR_{M}(P)}\delta(t-|\lambda_{j}|)dt\,.
\]
The sum over the resonances outside $\cB_{s}$ can be rewritten using
this counting measure: 
\begin{multline*}
\lva\sum_{\lambda_{j}\notin\cB_{s}}\hat{\phi}_{\beta}(\lambda_{j}\pm\lambda)\rva\leq\sum_{\lambda_{j}\notin\cB_{s}}C_{K,\epsilon,s}\beta^{-J_{+}\epsilon}\e^{-\frac{\epsilon}{2}|\Im(\lambda_{j})|\log\beta}\left(1+(|\lambda_{j}|-\lambda)\beta^{-J_{+}\epsilon}\right)^{-K}\\
\leq C_{K,\epsilon,s}\int_{s}^{+\infty}\frac{\beta^{-s\frac{\epsilon}{2}}\beta^{-J_{+}\epsilon}}{(1+|t-\lambda|\beta^{-J_{+}\epsilon})^{K}}dN(t)\leq C_{K,\epsilon,s}\int_{s}^{+\infty}\frac{t^{n}\beta^{-(J_{+}+\frac{s}{2})\epsilon}}{(1+|t-\lambda|\beta^{-J_{+}\epsilon})^{K}}dt\\
\leq C_{K,\epsilon,s}\lambda^{n}\beta^{-(J_{+}+\frac{s}{2})\epsilon}\leq C'_{K,\epsilon,s}\lambda^{n-(J_{+}+\frac{s}{2})\epsilon}.
\end{multline*}
In the last line, we have used a standard estimate $dN([0,t])\leq Ct^{n}$
valid in our geometric settings and the fact that $|\lambda-\beta|\leq1$.
Finally, it is easily checked that 
\begin{equation}
s>2(\frac{n}{\epsilon}+\Theta_{+}^{u}-J_{+})\ \Rightarrow\lva\sum_{\lambda_{j}\notin\cB_{s}}\hat{\phi}_{\beta}(\lambda_{j}\pm\lambda)\rva\leq C_{K,\epsilon,s}\lambda^{n-(\frac{s}{2}+J_{+})\epsilon}=o(\lambda^{-\epsilon\Theta_{+}^{u}})\,.\label{eq: Choice width strip}
\end{equation}
As we will show below, this property will be required to get a lower
bound for the scattering poles in the strip $\cB_{s}$, which then
needs to have a width of order $1/\epsilon$ in practice, since $\epsilon>0$
has to be chosen small. If $s$ is chosen according to \eqref{eq: Choice width strip},
we have obtained 
\[
\sum_{\lambda_{j}\in\scR_{M}(P)}\sum_{\iota=\pm1}\hat{\phi}_{\beta}(\lambda_{j}+\iota\lambda)=\sum_{\lambda_{j}\in\cB_{s}}\sum_{\iota=\pm1}\hat{\phi}_{\beta}(\lambda_{j}+\iota\lambda)+o(\lambda^{-\epsilon\Theta_{+}^{u}})
\]
and it remains to estimate the sum over resonances in $\cB_{s}$.
For this we adapt the technique of \cite{SjoZwo93_Low}. Note first
that if $\tau=r+\i\sigma$ with $-s\leq\sigma\leq0$ and $|r|\to\infty$,
then 
\[
\hat{\phi}_{\beta}(r+\i\sigma)=\hat{\phi}_{\beta}(r)+\cO_{K,s}\left((r\beta^{-\epsilon J_{+}}){}^{-K}\right)\,.
\]
Then, if $x\in\R$ and still $-s\leq\sigma\leq0$, we have 
\begin{align*}
|\hat{\phi}_{\beta}(x+\i\sigma)| & \leq a_{1}\e^{\sigma b_{1}}|\hat{\phi}(a_{1}(x+\i\sigma))|\\
 & \leq a_{1}|\hat{\phi}(a_{1}x)|+\lp|\hat{\phi}(a_{1}(x+\i\sigma))-\hat{\phi}(a_{1}x)|\rp.
\end{align*}
A straightforward computation using the Paley-Wiener estimate and
\eqref{eq: Choice width strip} shows that the second term of the
right hand side of the above equation is $\cO_{\epsilon,M,p}(a_{1}\langle a_{1}x\rangle^{-K})$
for any $K\in\N$. So we have 
\[
|\hat{\phi}_{\beta}(x+\i\sigma)|\leq a_{1}|\hat{\phi}(a_{1}x)|+\cO_{\epsilon,M,p}(a_{1}\langle a_{1}x\rangle^{-K}),\quad\forall K\in\N.
\]
In particular, if we write 
\[
\psi(x)\defeq\sup_{-s\leq\sigma\leq0}|a_{1}\hat{\phi}(a_{1}(x+\i\sigma))|
\]
it is clear that $\psi(x)\geq a_{1}|\hat{\phi}(a_{1}x)|$, so we have
obtained 
\begin{equation}
\psi(x)=a_{1}|\hat{\phi}(a_{1}x)|+\cO_{\epsilon,M,p}(a_{1}\langle a_{1}x\rangle^{-K}),\quad\forall K\in\N.\label{eq: Bound Fourier}
\end{equation}
Let us define now a measure on the real line that counts the resonances
in the strip $\cB_{s}$ by

\begin{equation}
d\mu_{s}(x)={\displaystyle \sum_{\lambda_{j}\in\scR_{M}(P)\cap\cB_{s}}\delta(x-r_{j})},\qquad r_{j}=\Re\lambda_{j}.\label{eq: Counting measure}
\end{equation}
We can then write 
\begin{align*}
\lva\sum_{\lambda_{j}\in\scR_{M}(P)\cap\cB_{s}}\sum_{\iota=\pm1}\hat{\phi}_{\beta}(\lambda_{j}+\iota\lambda)\rva & \leq\sum_{\lambda_{j}\in\scR_{M}(P)\cap\cB_{s}}\sum_{\iota=\pm1}\psi(r_{j}+\iota\lambda)\\
 & =\int_{\R}\sum_{\iota=\pm1}\psi(x+\iota\lambda)d\mu_{s}(x)\\
 & =\psi*\mu_{s}(-\lambda)+\psi*\mu_{s}(\lambda)\,.
\end{align*}
We first examine the term $\psi*\mu_{s}(\lambda)$. For $\nu>0$ arbitrary,
decompose this term as 
\begin{align*}
\psi*\mu_{s}(\lambda)= & \int_{\lambda-\lambda^{\nu+\epsilon J_{+}}}^{\lambda+\lambda^{\nu+\epsilon J_{+}}}\psi(x-\lambda)d\mu_{s}(x)+\int_{-\infty}^{\lambda-\lambda^{\nu+\epsilon J_{+}}}\psi(x-\lambda)d\mu_{s}(x)\\
 & +\int_{\lambda+\lambda^{\nu+\epsilon J_{+}}}^{+\infty}\psi(x-\lambda)d\mu_{s}(x)\,.
\end{align*}
The Paley-Wiener estimate for $\hat{\phi}_{\beta}$ implies that 
\[
\int_{-\infty}^{-\lambda^{\nu+\epsilon J_{+}}}a_{1}|\hat{\phi}(a_{1}x)|dx=\cO_{K}(\lambda^{-K\nu}),\quad\int_{\lambda^{\nu+\epsilon J_{+}}}^{+\infty}a_{1}|\hat{\phi}(a_{1}x)|dx=\cO_{K}(\lambda^{-K\nu})\,.
\]
Since $\nu>0$, notice that these terms are $\cO(\lambda^{-\infty})$.
These equations, together with \eqref{eq: Bound Fourier} and the
bound $d\mu_{s}([-t,t])=\cO(t^{n})$, imply now that 
\[
\int_{-\infty}^{\lambda-\lambda^{\nu+\epsilon J_{+}}}\psi(x-\lambda)d\mu_{s}(x)=\cO_{K}(\lambda^{n-K\nu}),\qquad\int_{\lambda+\lambda^{\nu+\epsilon J_{+}}}^{+\infty}\psi(x-\lambda)d\mu_{s}(x)=\cO_{K}(\lambda^{n-K\nu})\,.
\]
We could write exactly the same type of estimates for $\psi*\mu_{s}(-\lambda)$
but now for intervals of the form $]-\infty,-\lambda-\lambda^{\nu+\epsilon J_{+}}]$
and $[-\lambda+\lambda^{\nu+\epsilon J_{+}},+\infty[$, and altogether
this yields to

\[
c_{0}\lambda^{-\epsilon\Theta_{+}^{u}}\leq\int_{\lambda-\lambda^{\nu+\epsilon J_{+}}}^{\lambda+\lambda^{\nu+\epsilon J_{+}}}\psi(x-\lambda)d\mu_{s}(x)+\int_{-\lambda-\lambda^{\nu+\epsilon J_{+}}}^{-\lambda+\lambda^{\nu+\epsilon J_{+}}}\psi(x+\lambda)d\mu_{s}(x)+\cO_{K}(\lambda^{n-K\nu})+o(\lambda^{-\epsilon\Theta_{+}^{u}})\,.
\]
Using \eqref{eq: Bound Fourier}, if $\beta$ (and then $\lambda$)
is large enough this can be rewritten as 
\[
\lambda^{\epsilon(J_{+}-\Theta_{+}^{u})}\leq C_{M,p}\mu_{s}([\lambda-\lambda^{\nu+\epsilon J_{+}},\lambda+\lambda^{\nu+\epsilon J_{+}}]\cup[-\lambda-\lambda^{\nu+\epsilon J_{+}},-\lambda+\lambda^{\nu+\epsilon J_{+}}]),\quad C_{M,p}>0.
\]
At this point, we can not use directly a Tauberian argument to conclude,
since the above counting estimate is a priori valid only for the precise
value of $\lambda$ such that \eqref{eq: lower bound cosine} holds
true once $\beta$ is chosen. However, it is not hard to see that
for an arbitrary value of $\beta$ sufficiently large, we have 
\[
[\lambda-\lambda^{\nu+\epsilon J_{+}},\lambda+\lambda^{\nu+\epsilon J_{+}}]\subset[\beta-\beta^{2\nu+\epsilon J_{+}},\beta+\beta^{2\nu+\epsilon J_{+}}]
\]
and we finally get that 
\begin{align}
\exists C_{M,p},\beta_{0} & >0,\quad\forall\beta\geq\beta_{0},\quad\forall\nu>0,\nonumber \\
 & \beta^{\epsilon(J_{+}-\Theta_{+}^{u})}\leq C_{M,p}\mu_{s}([-\beta-\beta^{\nu+\epsilon J_{+}},-\beta+\beta^{\nu+\epsilon J_{+}}]\cup[\beta-\beta^{\nu+\epsilon J_{+}},\beta+\beta^{\nu+\epsilon J_{+}}]).\label{eq: main counting estimate}
\end{align}
We are now in position to recall a general Tauberian Lemma due to
Sjöstrand and Zworski: 
\begin{lem}
(\cite{SjoZwo93_Low}, Lemma p. 849) Let $\mu_{s}$ be a discrete
counting measure as in \eqref{eq: Counting measure}. Assume that
for some $\delta\in(0,1)$, $c,r_{0}>0$ and $\kappa+\delta\geq0$
we have for $r\geq r_{0}$, 
\[
\mu([r,r+r^{\delta}])\geq cr^{\kappa+\delta}.
\]
Then there is $c_{1},c_{2}>0$ such that 
\[
\mu([0,r])\geq c_{1}r^{1+\kappa}-c_{2}.
\]

\end{lem}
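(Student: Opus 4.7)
The plan is a standard Tauberian bootstrap: iterate the density hypothesis along a discrete sequence of intervals, sum the contributions, and transfer the resulting lower bound back to general $r$. Fix an initial value $\varrho_0\ge r_0$ and define inductively $\varrho_{k+1}=\varrho_k+\varrho_k^{\delta}$. The intervals $[\varrho_k,\varrho_{k+1}]$ are pairwise disjoint, and by the hypothesis each carries mass at least $c\,\varrho_k^{\kappa+\delta}$, so telescoping gives
\[
N(\varrho_K)\;\ge\;c\sum_{k=0}^{K-1}\varrho_k^{\kappa+\delta},\qquad N(r):=\mu([0,r]).
\]

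Next I would control the growth of $(\varrho_k)$ by comparison with the ODE $\dot r=r^{\delta}$, whose solution obeys $r(t)^{1-\delta}=(1-\delta)t+\mathrm{const}$. Discretely, applying the mean value theorem to $x\mapsto x^{1-\delta}$ on $[\varrho_k,\varrho_{k+1}]$ gives
\[
\varrho_{k+1}^{1-\delta}-\varrho_k^{1-\delta}=(1-\delta)\,\xi_k^{-\delta}\,\varrho_k^{\delta}
\]
for some $\xi_k\in[\varrho_k,\varrho_{k+1}]$. Since $\varrho_{k+1}/\varrho_k=1+\varrho_k^{\delta-1}\to 1$, one has $\xi_k/\varrho_k\to 1$, and the right-hand side tends to $1-\delta$. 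Summing produces $\varrho_K^{1-\delta}\asymp K$, hence $\varrho_K\asymp K^{1/(1-\delta)}$; conversely, the largest $K$ with $\varrho_K\le r$ satisfies $K\asymp r^{1-\delta}$.

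I would then substitute this growth back into the telescoped sum. Because $\delta<1$ and $\kappa+\delta\ge 0$ force $(\kappa+\delta)/(1-\delta)>-1$, a Riemann sum comparison gives
\[
\sum_{k=0}^{K-1}\varrho_k^{\kappa+\delta}\;\asymp\;\sum_{k=1}^{K}k^{(\kappa+\delta)/(1-\delta)}\;\asymp\;K^{(1+\kappa)/(1-\delta)}.
\]
Plugging in $K\asymp r^{1-\delta}$ collapses the exponent via the identity $(1+\kappa)/(1-\delta)\cdot(1-\delta)=1+\kappa$, yielding $N(\varrho_K)\ge c_1\varrho_K^{1+\kappa}$. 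Monotonicity of $N$ and the fact that consecutive $\varrho_k$ are comparable transfer the bound to all $r\ge\varrho_0$, while the additive $-c_2$ absorbs the finite segment $r<\varrho_0$.

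The only delicate step, and what I expect to be the main obstacle, is making the growth estimate of the second paragraph quantitative: one needs two-sided bounds of the form $c'K\le\varrho_K^{1-\delta}\le c''K$ valid uniformly for all $K\ge 0$, not merely asymptotically. This is achieved by choosing $\varrho_0$ large enough that $\xi_k^{-\delta}\varrho_k^{\delta}$ stays in a fixed compact subinterval of $(0,+\infty)$ throughout the iteration. Everything else is exponent bookkeeping; the crux identity $(1+\kappa)/(1-\delta)\cdot(1-\delta)=1+\kappa$ is what ensures the final exponent $1+\kappa$ is independent of $\delta$.
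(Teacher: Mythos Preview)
The paper does not prove this lemma; it simply quotes it from \cite{SjoZwo93_Low} and uses it as a black box. Your argument is correct and is essentially the standard one (and the one in the original reference): iterate $\varrho_{k+1}=\varrho_k+\varrho_k^{\delta}$, compare with the ODE $\dot r=r^{\delta}$ to get $\varrho_K^{1-\delta}\asymp K$, and then sum. One small remark: since $\kappa+\delta\ge 0$ and $1-\delta>0$, the exponent $(\kappa+\delta)/(1-\delta)$ is in fact $\ge 0$, not merely $>-1$, so the Riemann-sum comparison is with a nondecreasing summand and is entirely elementary. Your identification of the only genuine step---the uniform two-sided bound $c'K\le\varrho_K^{1-\delta}\le c''K$---is accurate, and your proposed fix (take $\varrho_0$ large enough that $\varrho_{k+1}/\varrho_k$ stays in a fixed neighborhood of $1$) is exactly how it is done.
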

This lemma can be used to conclude the proof of Theorem \ref{thm: MLPC}
in the following way. First if $I$ is a positive interval, we denote
by $-I$ the symmetric interval with respect to 0, and we define $\ti\mu_{s}(I)=\mu_{s}(I)+\mu_{s}(-I)$
which is now a measure on the real axis. The main counting estimate
\eqref{eq: main counting estimate} can now be rewritten 
\[
C_{M,p}\ti\mu_{s}([\beta-\beta^{\nu+\epsilon J_{+}},\beta+\beta^{\nu+\epsilon J_{+}}])\geq\beta^{\epsilon(J_{+}-\Theta_{+}^{u})}.
\]
For any $\eps_{0}>0$, if $r=\beta-\beta^{\nu+\epsilon J_{+}}$, $\delta=\nu+\epsilon J_{+}+\eps_{0}$
and $\beta$ is sufficiently large, then $[\beta-\beta^{\nu+\epsilon J_{+}},\beta+\beta^{\nu+\epsilon J_{+}}]\subset[r,r+r^{\delta}]$
so we can write 
\[
\ti\mu_{s}([r,r+r^{\delta}])\geq\frac{1}{C_{M,p}}r^{\epsilon(J_{+}-\Theta_{+}^{u})}.
\]
A straightforward application the above lemma with $\kappa=-\nu-\epsilon\Theta_{+}^{u}-\eps_{0}$
shows that there is $r_{0},C>0$ such that if $r\geq r_{0}$ then

\[
\mu_{s}([-r,r])=\ti\mu_{s}([0,r])\geq Cr^{1-\nu-\epsilon\Theta_{+}^{u}-\eps_{0}}\geq Cr^{1-3\epsilon\Theta_{+}^{u}}
\]
since $\nu,\eps_{0}$ are arbitrary and can then be chosen to be equal
to $\epsilon\Theta_{+}^{u}$. If we set $s=\frac{2n}{\epsilon}$,
$\Theta=3\Theta_{+}^{u}$ and recall \eqref{eq: choice J +}, \eqref{eq: Choice width strip},
Theorem \ref{thm: MLPC} is then proved.

\subsection{Proof of Theorem \ref{thm: MLPC}, $n$ even. }

Again in this section, $\phi_{\beta}=\phi^{(1)}$. If $n$ is even,
using \eqref{eq: Poisson even}, the trace applied to the test function
$\cos(\lambda t)\phi_{\beta}(t)$ gives

\begin{align}
\moy{\Tr u,\cos(\lambda t)\phi_{\beta}(t)} & =\sum_{\lambda_{j}\in\Lambda_{\rho}}\sum_{\iota=\pm1}\hat{\phi}_{\beta}(\lambda_{j}+\iota\lambda)\label{eq:Poisson Test even}\\
 & +\sum_{{\lambda^{2}\in\Spec_{pp}(P)\cap\R_{-}\atop \Im\lambda<0}}\sum_{\iota=\pm1}\hat{\phi}_{\beta}(\lambda_{j}+\iota\lambda)+\cO(\lambda^{-\epsilon J_{+}})\nonumber 
\end{align}
since the three last terms in \eqref{eq: Poisson even} are controlled
by the size of the support of $\phi_{\beta}$ if $P$ has no resonance
at 0, see \cite{Zwo98_even}, Section 3.

The principle is exactly the same as for $n$ odd. We split the sum
over the resonances $\lambda\in\Lambda_{\rho}$ and $\lambda^{2}\in\Spec_{pp}(P)\cap\R_{-}$,
$\Im\lambda<0$ in two parts. The first one deals with the resonances
located outside a strip of width $s$ so that \eqref{eq: Choice width strip}
holds true with the same arguments. The second one deals with the
resonances inside this strip, where we get now an equality of the
form 
\begin{align*}
0<\lambda^{-\epsilon\Theta_{u}^{+}} & \leq C_{M,p}\left(\mu_{s}([\lambda-\lambda^{\nu+\epsilon J_{+}},\lambda+\lambda^{\nu+\epsilon J_{+}}]\cup[-\lambda-\lambda^{\nu+\epsilon J_{+}},-\lambda+\lambda^{\nu+\epsilon J_{+}}])\right)\\
 & +\cO(\lambda^{-\infty})+\cO(\lambda^{-\epsilon J_{+}})
\end{align*}
where the last remainder comes from the terms in the second line in
\eqref{eq: Poisson even}. Since we have chosen $J_{+}>\Theta_{+}^{u}$,
the last two terms can be absorbed into the left hand side, and we
conclude the proof as for the case $n$ odd.

\section{The Topological Pressure as a scattering invariant\label{sec: Spectral Invarian}}

In this section we give the proof of Theorem \ref{thm: spectral invariant}.
Let us fix some $T>0$, and for a given value of $\Xi>1$ such that
\[
\e^{T}+\e^{h_{\top}T}\log2\leq\e^{\Xi T},
\]
consider a range of values for $\lambda$ given by 
\begin{equation}
T\leq\log\log\lambda\leq\Xi T.\label{eq: interval lambda}
\end{equation}
For convenience, we write 
\[
I_{\Xi}(T)=[\e^{\exp T},\e^{\exp\Xi T}].
\]
If $T$ is large enough, for any choice of $\lambda$ in this interval
we have $T\leq\epsilon\log\lambda$ and we can apply Proposition \ref{prop: Long Time Trace Formula},
with $\phi^{(2)}(t)$ as it has been defined in \eqref{eq: Tests functions}
and \eqref{eq: Choice param phi2}. Applying the long-time trace formula
and taking the real part, we obtain: 
\[
\Re\langle\Tr U,\e^{\i\lambda t}\phi^{(2)}(t)\rangle=\sum_{\gamma\in\scP}\cos(\lambda\ell(\gamma))\frac{\ell^{\sharp}(\gamma)}{\sqrt{|1-P_{\gamma}|}}\phi^{(2)}(\ell(\gamma))+\cO_{M,p}(\lambda^{-\mu}).
\]
An upper bound for the left hand side is easily derived: the cosine
and $\phi^{(2)}(\ell(\gamma))$ terms are bounded by 1, which gives
an upper bound of the right hand side independent of $\lambda$ and
$\Xi$, so we finally get 
\begin{equation}
\limsup_{\Xi\to\infty}\sup_{\lambda\in[\e^{\exp T},\e^{\exp\Xi T}]}\Re\langle\Tr U,\e^{\i\lambda t}\phi^{(2)}\rangle\leq\sum_{\gamma\in\scP:\ell(\gamma)\in[T-1,T]}\frac{\ell^{\sharp}(\gamma)}{\sqrt{|1-P_{\gamma}|}}+C_{M,p}\e^{-\mu\e^{T}}.\label{eq: upper bound Pressure}
\end{equation}
To obtain a lower bound, we must again control the oscillating terms.
Note that with the above choice of $a_{2}$ and $b_{2}$, $\phi^{(2)}$
has support in $[T-1,T]$, and is equal to 1 in $[T-5/4,T-1/4]$.
Define 
\[
\cJ_{T}=\left\{ \gamma\in\scP:\ell(\gamma)\in[T-1,T]\right\} ,\quad\cJ_{T}'=\left\{ \gamma\in\scP:\ell(\gamma)\in[T-5/4,T-1/4]\right\} 
\]
and denote by $\nu(T)$ the number of distinct lengths of the orbits
in $\cJ_{T}$. Observe now that 
\[
\nu(T)\leq\sharp\cJ_{T}\leq\e^{h_{\top}T}\leq\e^{\Xi T}.
\]
A Dirichlet box principle can now be used \cite{JaPoTo07_low}: if
$\left\{ r_{1},\dots,r_{\nu(T)}\right\} $ are $\nu(T)$ distinct
positive real numbers, for any constant $m>0$ there is $\lambda_{0}\in[m,2^{\nu(T)}m]$
such that the following equation holds true: 
\[
1\leq j\leq\nu(T),\quad|\lambda_{0}r_{j}\mod2\pi|\leq\frac{1}{2}.
\]
This immediately yields to $\cos(\lambda_{0}r_{j})\geq\frac{1}{2}$
for any $r_{j}$. Now take $m=\e^{\alpha T}$ with $\alpha>0$. In
particular, with $m=\e^{\e^{T}}$ this implies that $\lambda_{0}$
varies in the interval 
\[
[\e^{\e^{T}},\e^{\e^{T}+\log2\e^{h_{\top}T}}]
\]
which is included in \eqref{eq: interval lambda}. For this special
value of $\lambda=\lambda_{0}$, we have 
\begin{align*}
\frac{1}{2}\sum_{\gamma\in\cJ_{T}'}\frac{\ell^{\sharp}(\gamma)}{\sqrt{|1-P_{\gamma}|}}-C'_{M,p}\e^{-\mu\e^{T}} & \leq\Re\langle\Tr U,\e^{\i\lambda_{0}t}\phi^{(2)}\rangle\\
 & \leq\limsup_{\Xi\to\infty}\sup_{\lambda\in I_{\Xi}(T)}\Re\langle\Tr U,\e^{\i\lambda t}\phi^{(2)}\rangle.
\end{align*}
Combining this equation with \eqref{eq: upper bound Pressure}, we
obtain

\begin{align*}
\frac{1}{2}\sum_{\gamma\in\cJ_{T}'}\frac{\ell^{\sharp}(\gamma)}{\sqrt{|1-P_{\gamma}|}}-C'_{M,p}\e^{-\mu\e^{T}} & \leq\limsup_{\Xi\to\infty}\sup_{\lambda\in I_{\Xi}(T)}\Re\langle\Tr U,\e^{\i\lambda t}\phi^{(2)}\rangle\\
 & \leq\sum_{\gamma\in\cJ_{T}}\frac{\ell^{\sharp}(\gamma)}{\sqrt{|1-P_{\gamma}|}}+C_{M,p}\e^{-\mu\e^{T}}.
\end{align*}
The sums over periodic orbits in the above equation can be related
to the topological pressure $\Pr_{\Gamma}^{u}$. First, recall that
by definition, we have 
\[
\det d\Phi_{\rho}^{t}|_{E_{\rho}^{u}}=\e^{\int_{0}^{t}J^{u}(\Phi^{s}(\rho))ds}.
\]
On the other hand,  hyperbolicity of the flow implies that there are
constants $K_{i},C_{i}>0$, $1\leq i\leq4$, independent of the point
$\rho,$ with 
\[
C_{1}\e^{K_{1}t}\leq\det d\Phi_{\rho}^{t}|_{E_{\rho}^{u}}\leq C_{2}\e^{K_{2}t},\qquad C_{3}\e^{-K_{3}t}\leq\det d\Phi_{\rho}^{t}|_{E_{\rho}^{s}}\leq C_{4}\e^{-K_{4}t}.
\]
This means that if $\gamma$ is a periodic orbit of length $t$ (large),
then $\det(1-P_{\gamma})$ is dominated by the unstable Jacobian,
namely 
\begin{equation}
\det(1-P_{\gamma})=\e^{\int_{0}^{t}J^{u}(\Phi^{s}(\rho))ds}(1+\cO(\e^{-\ti Kt}))\label{eq: approx poincare J u}
\end{equation}
for some $\ti K>0$ independent of $\gamma$. Now, for some $\delta>0$
consider the function $f\in C^{0}(\Gamma)$ given by 
\[
f=-\frac{1}{2}J^{u}+(1+\delta)|\Pr_{\Gamma}^{u}|
\]
where we have written $\Pr_{\Gamma}^{u}\defeq\Pr_{\Gamma}(-\fr12J^{u})$\textbf{.
}By construction we have $\Pr_{\Gamma}(f)>0,$ so from the results
recalled in section \ref{sub: Pressure}, we deduce the existence
of $C>0$ such that for $T$ large enough, 
\[
\frac{1}{C}\e^{T\Pr_{\Gamma}(f)}\leq\sum_{\gamma\in\scP:\ell(\gamma)\in\cJ_{T}}\e^{\int_{\gamma}f}\leq C\e^{T\Pr_{\Gamma}(f)}\,,
\]
and this can be rewritten as 
\[
\frac{1}{C}\e^{T(\Pr_{\Gamma}^{u}+(1+\delta)|\Pr_{\Gamma}^{u}|)}\leq\sum_{\gamma\in\scP:\ell(\gamma)\in\cJ_{T}}\e^{\int_{\gamma}f}\leq C\e^{T(\Pr_{\Gamma}^{u}+(1+\delta)|\Pr_{\Gamma}^{u}|)}.
\]
Thus, there is $C>0$ such that for $T$ large enough, 
\[
\frac{1}{C}\e^{T\Pr_{\Gamma}^{u}}\leq\sum_{\gamma\in\scP:\ell(\gamma)\in\cJ_{T}}\e^{\int_{\gamma}-\frac{1}{2}J^{u}}\e^{(\ell(\gamma)-T)(1+\delta)|\Pr_{\Gamma}^{u}|}\leq C\e^{T\Pr_{\Gamma}^{u}}
\]
and we obtain for some other $C=C_{M,p,\delta}>0$ that 
\[
\frac{1}{C}\e^{T\Pr_{\Gamma}^{u}}\leq\sum_{\gamma\in\scP:\ell(\gamma)\in\cJ_{T}}\e^{\int_{\gamma}-\frac{1}{2}J^{u}}\leq C\e^{T\Pr_{\Gamma}^{u}}\,,
\]
again for $T$ sufficiently large. Finally in view of \eqref{eq: approx poincare J u}
and the fact that $\inf_{\gamma\in\scP}\ell(\gamma)\leq\ell^{\sharp}(\gamma)\leq T$,
we conclude that 
\[
\frac{1}{C}\left(\inf_{\gamma\in\scP}\ell(\gamma)\right)\e^{T\Pr_{\Gamma}^{u}}\leq\sum_{\gamma\in\scP:\ell(\gamma)\in\cJ_{T}}\frac{\ell^{\sharp}(\gamma)}{\sqrt{1-P_{\gamma}}}\leq CT\e^{T\Pr_{\Gamma}^{u}}.
\]
It is easy to see that the same inequality (again with a different
constant $C$) holds for periodic orbits in $\cJ_{T'}$. Combining
these inequalities, and noting that $\e^{-\mu\e^{T}}=o(\e^{T\Pr_{\Gamma}^{u}})$
if $T$ is sufficiently large, we end up with 
\[
\frac{1}{C}\left(\inf_{\gamma\in\scP}\ell(\gamma)\right)\e^{T\Pr_{\Gamma}^{u}}\leq\limsup_{\Xi\to\infty}\sup_{\lambda\in I_{\Xi}(T)}\Re\langle\Tr U,\e^{\i\lambda t}\phi^{(2)}\rangle\leq CT\e^{T\Pr_{\Gamma}^{u}}.
\]
We only need to take the logarithm of the above equation and divide
by $T$ to recover \eqref{eq: Spectral Invariant} in the limit $T\to\infty$,
and Theorem \ref{thm: spectral invariant} is proved since we can
replace $\Re\langle\Tr U,\e^{\i\lambda t}\phi^{(2)}\rangle$ by $\langle\Tr u,\cos(\lambda t)\phi^{(2)}\rangle$
thanks to Proposition \ref{prop: Trace Simplifiee}.

\section{Hyperbolic trapped set and long-time generating functions \label{sec: Classical dynamics}}

\subsection{Separation of periodic orbits}

In this section, we consider periodic orbits of the flow in $\cE^{*}M$
with length in the interval $\ell(\gamma)\in[T-1,T]$, where $T\leq\epsilon\log h^{-1}$.
For $\gamma$ a periodic orbit and $\eps>0$, denote by 
\[
\Theta(\gamma,\eps)\defeq\{\rho\in\cE^{*}M,\ \ d(\rho,\gamma)<\eps\}
\]
an open tubular $\eps-$neighborhood $\gamma$ in the adapted metric
for $\cE^{*}M$. We first state a fact which is essentially a direct
consequence of the well known Anosov shadowing Lemma \cite{KaHa95}. 
\begin{lem}
\label{lem: Separation Orbits}There are constants $\delta_{0},C>0$
depending only on $M$ and $p$ such that if $\gamma,\gamma'$ are
periodic orbits of the flow $\Phi^{t}$ in $\cE^{*}M$ with length
in the interval $[\ell,\ell+\delta_{0}]$, then 
\[
\Theta(\gamma,C\e^{-K_{+}\ell})\cap\Theta(\gamma',C\e^{-K_{+}\ell})=\emptyset\,,
\]
unless $\gamma'=-\gamma$, in which case the two sets are identical. \end{lem}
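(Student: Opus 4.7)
The plan is to combine the exponential growth estimate \eqref{eq: Growth Bowen} with the expansivity (equivalently, the uniqueness in the Anosov closing lemma) of the hyperbolic flow $\Phi^t$, which holds uniformly on the thickened energy layer $\cE^*M^\delta$.

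Suppose for contradiction that the two tubular neighborhoods meet: pick $\rho\in\gamma$ and $\rho'\in\gamma'$ with $d(\rho,\rho') \leq 2C\e^{-K_+\ell}$. Applying \eqref{eq: Growth Bowen} to the ball $B_\rho(2C\e^{-K_+\ell})$, we obtain
\[
d(\Phi^t(\rho),\Phi^t(\rho')) \leq C_+\e^{K_+ t}\cdot 2C\e^{-K_+\ell} \leq 2CC_+,
\]
for every $t\in[0,\ell]$. Let $\eps_{\exp}>0$ denote the expansivity constant of $\Phi^t$ on $\cE^*M^\delta$: two orbits which remain $\eps_{\exp}$-close for all time (up to a continuous reparametrization) must coincide as unparametrized curves. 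Choosing $C$ small enough that $2CC_+ < \eps_{\exp}/2$, the two orbit segments remain well within this expansivity threshold over one full period.

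Next I propagate this closeness to all times using periodicity. Since $\ell_\gamma,\ell_{\gamma'}\in[\ell,\ell+\delta_0]$ and $\Phi^{\ell_\gamma}(\rho)=\rho$, the point $\Phi^{\ell_\gamma}(\rho')$ differs from some point of $\gamma'$ close to $\rho'$ by at most $\delta_0\cdot\sup_{\cE^*M^\delta}\|H_q\|$, so choosing $\delta_0$ small enough (depending on $\|H_q\|_{L^\infty}$ and $\eps_{\exp}$) allows iteration and produces a continuous reparametrization $\sigma:\R\to\R$ such that $d(\Phi^t(\rho),\Phi^{\sigma(t)}(\rho')) \leq \eps_{\exp}$ for every $t\in\R$. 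Expansivity then forces $\gamma$ and $\gamma'$ to be the same orbit as subsets of $T^*M$; the only loophole allowed by the time-reversal symmetry of the geometric setup is the case $\gamma'=-\gamma$, in which the two tubes are trivially identical.

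The main obstacle is making the global iteration rigorous: the drift $|\ell_\gamma - \ell_{\gamma'}|\leq\delta_0$ accumulates over periods, and a naive iteration could eventually lose control. The cleanest way around this is to invoke the Anosov closing lemma directly: any pseudo-orbit $\eps_{\exp}$-close to $\gamma$ over one full period is shadowed by a unique true periodic orbit in the hyperbolic set, which must therefore be $\gamma$ itself. Applying this to $\{\Phi^t(\rho')\}_{t\in[0,\ell_\gamma]}$, viewed as such a perturbation of $\gamma$ in view of the preceding estimate, yields $\gamma=\gamma'$ (or the $-\gamma$ alternative). All constants $C$ and $\delta_0$ depend only on the uniform hyperbolicity data $C_+$, $K_+$, $\eps_{\exp}$ and on $\|H_q\|_{L^\infty(\cE^*M^\delta)}$, hence only on $M$ and $p$ as required.
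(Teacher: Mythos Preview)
Your proposal is correct and follows essentially the same route as the paper: use the growth estimate \eqref{eq: Growth Bowen} to keep the two orbit segments $\eps$-close over a full period, then invoke the uniqueness part of the Anosov shadowing/closing lemma to force $\gamma=\gamma'$. The paper carries this out by explicitly building an infinite discrete $\tilde\eps$-pseudo-orbit $(z_k,\tau)_{k\in\Z}$ out of the points $\Phi^{(k\bmod N)\tau}(\rho')$ and checking that it is $\tilde\eps$-shadowed by both $\gamma$ and $\gamma'$, whereas you appeal more directly to the closing lemma applied to the almost-closed segment $\{\Phi^t(\rho')\}_{t\in[0,\ell_\gamma]}$; these are two phrasings of the same argument.
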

\begin{proof}
From Darboux theorem, there is a symplectic chart $(y,\eta)=(y^{1},\dots y^{n};\eta^{1},\dots\eta^{n})$
near every $\rho\in\cE^{*}M$ such that $\rho\equiv(0,0)$, $\eta_{1}=q-1$
and $\frac{\d}{\d y_{1}}(0,0)=H_{q}(\rho)$. The unit energy layer
is obtained by imposing $\eta_{1}=0$, so for $\eps,\ti\eps>0$ two
small enough constants, every $\rho\in\Gamma\subset\cE^{*}M$ has
a neighborhood in $\cE^{*}M$ diffeomorphic to the flow-box 
\[
\cF(\ti\eps,\eps)=]-\ti\eps,\ti\eps[\times U_{\rho}(\eps)\subset\R^{2n-1},
\]
where $]-\ti\eps,\ti\eps[$ denotes the local coordinate along the
flow direction, and $U_{\rho}(\eps)\subset\R^{2n-2}$ is a cross-section
of the flow made of an open ball of radius $\eps>0$. Let now $\gamma\in\cE^{*}M$
be a periodic orbit of length $\ell$, and we assume that for some
$\tau\in]0,\ti\eps[$, the tubular neighborhood $\Theta(\gamma,\eps)$
contains another orbit $\gamma'\neq\gamma$ of length $\ell+\tau$.
We will show that if $\eps,\ti\eps$ are sufficiently small, we will
get a contradiction. We can choose two points $\rho,\rho'\in\Theta(\gamma,\eps)$
such that $\rho\in\gamma$, $\rho'\in\gamma'$ and in the flow box
$\cF(\ti\eps,\eps)$ centered around $\rho=(0,0)$, the point $\rho'$
has coordinates $(0,x')$, and then $\Phi^{\ell}(\rho')=(-\tau,x').$
On the other hand, we know that 
\begin{equation}
d(\Phi^{\ell}(\rho),\Phi^{\ell}(\rho'))\leq C_{+}\e^{\ell K_{+}}d(\rho,\rho')\leq C_{+}\e^{\ell K_{+}}\eps.\label{eq: flow tube}
\end{equation}
Hence if $\eps\leq C_{+}^{-1}\e^{-\ell K_{+}}\ti\eps$, then $d(\Phi^{t}\rho,\Phi^{t}\rho')\leq\ti\eps$
for $0\leq t\leq\ell$. From these data, we will construct an infinite,
discrete $\ti\eps-$pseudo orbit of the flow near $\gamma$.

For this, let us divide the interval $[0,\ell+\tau]$ into subintervals
$[i_{0},i_{1}]\cup\cdots\cup[i_{N-1},i_{N}]$ such that $i_{0}=0$,
$i_{N}=\ell+\tau$, $i_{k+1}-i_{k}=\tau$ if $k\leq N-1$ and $i_{N}-i_{N-1}\leq\tau$.
We define a sequence of points $(z_{k})_{k\in\N}$ in $\cE^{*}M$,
and a sequence of associated times $(\delta_{k})_{k\in\N}$ by 
\[
z_{k}=\Phi^{(k\mod N)\tau}(\rho'),\quad\delta_{k}=\tau,\quad k\in\Z.
\]
By construction, $(z_{k},\tau)_{k\in\Z}$ is an $\ti\eps-$pseudo
orbit for the flow, as $d(z_{k+1},\Phi^{\tau}(z_{k}))=0$ if $k\neq N-1\mod N$,
and if $k=N-1\mod N$, we see that 
\[
d(\Phi^{\tau}(z_{k}),z_{k+1})=d(\Phi^{r}(\rho'),\rho')\leq\ti\eps,\qquad r=\tau-|i_{N}-i_{N-1}|.
\]
On the other hand, we have clearly $d(z_{k},\gamma')=0\leq\ti\eps$
for all $k\in\Z$. Recalling \eqref{eq: flow tube} and the choice
of $\eps$, we also see that $d(z_{k},\gamma)\leq\ti\eps$ for any
$k$. Finally, $(z_{k},\tau)_{k\in\Z}$ is an infinite $\ti\eps-$pseudo-orbit
which is $\ti\eps-$shadowed by $\gamma$ and $\gamma'$ which are
supposedly distinct. But the Anosov shadowing Lemma (\cite{KaHa95},
Chapter 18) ensures the existence of $\delta_{0}>0$ such that if
$\ti\eps\leq\delta_{0}$, then any $\ti\eps-$pseudo orbit is $\ti\eps-$shadowed
by a unique genuine orbit. Hence if we chose $\ti\eps\leq\delta_{0}$
we must have $\rho=\rho'$ and the Lemma is proved up to shrink $\delta_{0}$
further, so that no orbits multiple of each other can have lengths
in an interval of size $\delta_{0}$. 
\end{proof}

\subsection{Hamilton-Jacobi around Ehrenfest time\label{sub: Ham Jac}}

Consider the flow $\Phi^{t}:T^{*}M\to T^{*}M$ for $0\leq t\leq T$
where again $T\leq\epsilon\log h^{-1}$. In this section, examine
how the (local) Hamilton-Jacobi theory of generating functions for
the flow $\Phi^{t}$ apply when $T\to+\infty.$

Since we will work locally, we first need to control the size of open
sets evolved by the flow until such large times. As above, let $\rho_{0}\in\Gamma$.
From the growth of balls under the dynamics \eqref{eq: Growth Bowen},
we notice that if we consider $\eps'>0,$ $\eps(t)=\e^{-Lt}$ with
$L>0$ fixed but such that $B_{\rho_{0}}(\eps(t))\subset\cE^{*}M^{\delta}$
for all $t\geq0$, then 
\[
\diam\lp\Phi^{t}(B_{\rho_{0}}(\eps(t))\rp\leq\eps'
\]
if $C_{+}\e^{(K_{+}-L)t}\leq\eps'$, which can be satisfied for sufficiently
large times only if $L>K_{+}$. Hence if $L$ is sufficiently large,
then for any $t\in[T-1,T]$, $\Phi^{t}(B_{\rho_{0}}(\e^{-LT}))\subset T^{*}M$
can be parametrized by a single local coordinates patch in $\R^{2n}$.

We now recall a particular choice of coordinates near $\rho_{0}$
which is well adapted to the dynamics of the flow $\Phi^{t}$ for
times $t\leq\epsilon\log h^{-1}$, essentially built on classical
results of symplectic geometry, see for instance \cite{GrSj94}, Chapter
5 and 9. For $U\subset T^{*}M$ we denote by 
\[
\Graph\Phi^{t}|_{U}\defeq\{(\rho,\sigma),\ \rho\in U,\ \sigma=\Phi^{t}(\rho)\}\subset T^{*}M\times T^{*}M.
\]
Let $\eps_{h},t_{h}\in(0,1)$, possibly depending on $h$, and $0\leq t_{0}<T$
be such that $t_{0}+t_{h}\leq T$, and consider for $\rho_{0}=(y_{0},\eta_{0})\in\cE^{*}M$
the open set 
\begin{align*}
\cU(\rho_{0},t_{0},\eps_{h}) & =B_{\rho_{0}}(\eps_{h})\times\bigcup_{t_{0}-t_{h}<s<t_{0}+t_{h}}\Phi^{s}(B_{\rho_{0}}(\eps_{h}))\times]t_{0}-t_{h},t_{0}+t_{h}[\\
 & =\bigcup_{t_{0}-t_{h}<s<t_{0}+t_{h}}\Graph\Phi^{s}|_{B_{\rho_{0}}(\eps_{h})}\times]t_{0}-t_{h},t_{0}+t_{h}[\subset T^{*}M\times T^{*}M\times\R.
\end{align*}
Without loss of generality, for $\eps_{h},t_{h}>0$ sufficiently small,
we can perform a symplectic change of variables in $B_{\rho_{0}}(\eps_{h})$
such that the projection map 
\begin{equation}
\pi_{G}:\Graph\Phi^{t}|_{B_{\rho_{0}}(\eps_{h})}\owns(y,\eta,x,\xi,t)\mapsto(x,\eta,t)\,,\quad\Phi^{t}(y,\eta)=(x,\xi)\label{eq: projection graph}
\end{equation}
has an invertible differential at the point $(y_{0},\eta_{0},x_{0},\xi_{0},t_{0})$,
and this is equivalent to say that the upper-left block of $d\Phi^{t_{0}}(y_{0},\eta_{0})$
is invertible. By the local inverse theorem, $\pi_{G}$ is then invertible
near $(y_{0},\eta_{0},x_{0},\xi_{0},t_{0})$ but we want to estimate
quantitatively the size of such a neighborhood. More precisely, we
have : 
\begin{prop}
\label{prop: Local projection} There is $L_{0}>0$ such that for
any $\rho_{0}\in\cE^{*}M$, $t_{0}>0$ with $t_{0}+h^{\epsilon L_{0}}\leq\epsilon\log h^{-1}$,
there are local coordinates on $U_{0}=B_{\rho_{0}}(h^{\epsilon L_{0}})\subset\cE^{*}M^{\delta}$
and near $\Phi^{t_{0}}(U_{0})$ with the property that the projection
map 
\[
\Graph\Phi^{t}|_{U_{0}}\owns(y,\eta,x,\xi)\mapsto(x,\eta)\,,\quad\Phi^{t}(y,\eta)=(x,\xi)
\]
is a diffeomorphism for $|t-t_{0}|\leq h^{\epsilon L_{0}}$.\end{prop}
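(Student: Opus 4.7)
The plan is to read Proposition \ref{prop: Local projection} as a quantitative, time-extended refinement of the pointwise linear-algebra observation made just before its statement. The key input is that although the derivatives of $\Phi^{t_0}$ grow like $\e^{K_{+}t_{0}}$, the Ehrenfest condition $t_{0}\leq\epsilon\log h^{-1}$ converts this exponential into a polynomial factor $h^{-\epsilon K_{+}}$, which can be beaten by imposing a ball radius $h^{\epsilon L_{0}}$ with $L_{0}$ chosen large enough relative to $K_{+}$.

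First I would fix symplectic coordinates $(y,\eta)$ near $\rho_{0}$ as in the paragraph preceding the statement, so that at the base point the upper-left block $A\defeq\partial_{y}X(\rho_{0},t_{0})$ of $d\Phi^{t_{0}}(\rho_{0})$ is invertible, writing $\Phi^{t}(y,\eta)=(X(y,\eta,t),\Xi(y,\eta,t))$. Since $d\Phi^{t_{0}}$ is symplectic with norm bounded by $C\e^{K_{+}t_{0}}$ (the Gr\"onwall estimate underlying \eqref{eq: Growth Bowen}, applied to the variational equation $\dot{d\Phi^{t}}=J\Hess q\cdot d\Phi^{t}$), so is its inverse, giving $\|A^{-1}\|\leq C'\e^{K_{+}t_{0}}$. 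Iterating Gr\"onwall, the higher derivatives of the flow satisfy $\|d^{k}\Phi^{t}\|_{L^{\infty}(\cE^{*}M^{\delta})}\leq C_{k}\e^{kK_{+}t}$ uniformly for $|t-t_{0}|\leq 1$.

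The main step is then a quantitative perturbation argument applied to
\[
F(y,\eta,t)\defeq\bigl(X(y,\eta,t),\eta,t\bigr),
\]
whose Jacobian is block upper-triangular with diagonal blocks $\partial_{y}X,\,I,\,1$, so invertibility of $dF$ reduces to invertibility of $\partial_{y}X$. Taylor expansion together with the derivative bounds above gives, for $(y,\eta)\in B_{\rho_{0}}(h^{\epsilon L_{0}})$ and $|t-t_{0}|\leq h^{\epsilon L_{0}}$,
\[
\|\partial_{y}X(y,\eta,t)-A\|\leq C''\e^{2K_{+}t_{0}}h^{\epsilon L_{0}}=O\bigl(h^{\epsilon(L_{0}-2K_{+})}\bigr).
\]
Choosing $L_{0}$ large enough that this perturbation is smaller than $\tfrac{1}{2}\|A^{-1}\|^{-1}\geq c\,h^{\epsilon K_{+}}$, say $L_{0}>3K_{+}$, a Neumann series keeps $\partial_{y}X$ invertible throughout the ball and interval with uniformly bounded inverse. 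Combined with the same exponential-type control on the $C^{2}$ norm of $F$, the quantitative inverse function theorem (contraction-mapping form) promotes $F$ to a diffeomorphism on the full neighborhood $B_{\rho_{0}}(h^{\epsilon L_{0}})\times(t_{0}-h^{\epsilon L_{0}},t_{0}+h^{\epsilon L_{0}})$, after possibly enlarging $L_{0}$ once more to absorb the $C^{2}$ factor.

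It remains to check that $\Phi^{t_{0}}(B_{\rho_{0}}(h^{\epsilon L_{0}}))$ lies inside a single coordinate patch near $\Phi^{t_{0}}(\rho_{0})$, which follows directly from \eqref{eq: Growth Bowen}: the diameter is at most $C_{+}\e^{K_{+}t_{0}}h^{\epsilon L_{0}}=O(h^{\epsilon(L_{0}-K_{+})})$, so it tends to $0$ as soon as $L_{0}>K_{+}$. The main obstacle is therefore purely the bookkeeping of exponents: each ingredient (the inverse norm of $A$, the $C^{2}$ control on $F$, the diameter of the image) contributes a factor $\e^{cK_{+}t_{0}}$, and one must fix $L_{0}$ larger than the sum of these contributions so that the polynomial gain from $h^{\epsilon L_{0}}$ still dominates; this is exactly why $U_{0}$ must be polynomially small in $h$.
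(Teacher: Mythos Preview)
Your proof is correct and follows essentially the same approach as the paper: both arguments apply a quantitative contraction-mapping version of the inverse function theorem to the map $(y,\eta,t)\mapsto(x,\eta,t)$, feeding in Gr\"onwall-type bounds $\|\partial^{\alpha}\Phi^{t}\|\leq C_{\alpha}\e^{C|\alpha|t}$ and then absorbing the resulting exponentials $\e^{Ct_{0}}\leq h^{-C\epsilon}$ by taking $L_{0}$ large enough. The only cosmetic difference is that you phrase the invertibility of the differential via a Neumann-series perturbation of the block $A=\partial_{y}X$, whereas the paper writes out the contraction map $\boldsymbol{G}(z)=(d_{z_{0}}\boldsymbol{F})^{-1}(\boldsymbol{F}(z)-\zeta)-z$ explicitly; note, incidentally, that your passage from $\|(d\Phi^{t_{0}})^{-1}\|\leq C\e^{K_{+}t_{0}}$ to $\|A^{-1}\|\leq C'\e^{K_{+}t_{0}}$ uses the freedom to choose coordinates at both ends (symplecticity alone bounds the inverse of the full matrix, not of a block), which the paper also leaves implicit.
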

\begin{proof}
Set $\Phi^{t_{0}}(y_{0},\eta_{0})=(x_{0},\xi_{0})$. In a local chart
in $T^{*}\R^{n}\times T^{*}\R^{n}$, let $U\subset\R^{2n+1}$ be an
open ball of radius $r$ around $(y_{0},\eta_{0},t_{0})$ and $V\subset\R^{2n+1}$
an open neighborhood of $(x_{0},\eta_{0},t_{0})$. Here if $z,z'\in U$,
there are $c,c'>0$ depending only on $M$ and $p$ such that if $\|\cdot\|$
denotes the usual Euclidian norm on $\R^{2n+1}$ and $d$ the adapted
metric on $T^{*}M$, then 
\begin{equation}
c\|z-z'\|\leq d(\rho,\rho')+|t-t'|\leq c'\|z-z'\|.\label{eq: comparaison des normes}
\end{equation}
What we want is an estimate on $r$ for which $\boldsymbol{F}\defeq\pi_{G}\circ\Phi^{t}:U\to V$
is one-to-one on its image, knowing that $d_{\rho_{0},t_{0}}\boldsymbol{F}\in GL(2n+1)$.
We will follow the lines of the usual proof of the local inverse theorem.
Let us write $z=(y,\eta,t)$ and $\zeta=(x,\eta,t)$. Without loss
of generality, assume that $z_{0}=(y_{0},\eta_{0},t)=0$, $\zeta_{0}=(x_{0},\eta_{0},t_{0})=0$.
In local coordinates, it makes sense to write, for $\zeta$ fixed
near $\boldsymbol{F}(z_{0})$ : 
\[
\boldsymbol{G}(z)=(d_{z_{0}}\boldsymbol{F})^{-1}(\boldsymbol{F}(z)-\zeta)-z,\qquad z\in U.
\]
Note that $d_{z_{0}}\boldsymbol{G}=0$, so by continuity there is
$r'>0$ such that $\|z\|\leq r'\Rightarrow\|d_{z}\boldsymbol{G}\|\leq1/4$.
To estimate $r'$, we first recall some standard flow estimates. Since
$d_{t}\Phi^{t}=H_{p}\Phi^{t}$, estimates using Gronwall inequality
\cite[Lemma 11.11]{Zwo10_semiAMS} show that for $t\geq0$, the derivatives
of $\Phi^{t}$ with respect to the phase space and time variables
grow at most exponentially in time, namely if $(x,\xi)\in T^{*}M$
satisfies $p(x,\xi)\leq R$ for some $R>0$, then there is constant
$C_{M,p,R}$ such that for any multi-index $\alpha\in\N^{2n+1}$,
\begin{equation}
\sup_{(x,\xi)\in T^{*}M:\, p(x,\xi)\leq R}|\d^{\alpha}\Phi^{t}(x,\xi)|\leq C_{\alpha}\e^{C_{M,p,R}|\alpha||t|}\,,\qquad C_{\alpha}>0,\ t\in\R.\label{eq: flow estimates}
\end{equation}
This implies using a Taylor expansion of $d\boldsymbol{F}$ around
$z_{0}$ together with \eqref{eq: flow estimates} that for some $C>0$,
we have $\|d_{z}\boldsymbol{F}-d_{z_{0}}\boldsymbol{F}\|\leq C\e^{CT}$,
$\|(d_{z_{0}}\boldsymbol{F})^{-1}\|\leq C\e^{CT}$ and finally, up
to increase $C$ again, 
\[
\|z\|\leq r'\defeq C^{-1}\e^{-CT}\Rightarrow\|d_{z}\boldsymbol{G}\|\leq\frac{1}{4}.
\]
Now $\|\boldsymbol{G}(z)\|\leq\|d_{z_{0}}\boldsymbol{F}^{-1}\zeta\|+\frac{1}{4}\|z\|<1/2$
if, say, $\|d_{z_{0}}\boldsymbol{F}^{-1}\zeta\|\leq1/4$. This is
for instance satisfied as soon as $\zeta\in B(0,\frac{1}{4C}\e^{-CT})$.
Hence $\boldsymbol{G}$ is contracting on a ball of radius $r'$ and
the unique fixed point satisfies $\boldsymbol{F}(z)=\zeta$. We then
obtain an invertible mapping from a ball $\{\|z\|<r\}$ on its image
as soon as $r<r'$ and $\|\zeta\|=\|\boldsymbol{F}(z)\|\leq r'/4$
which is achieved for $r=L^{-1}\e^{-LT}$ and some $L>C$. Hence for
\[
U=B(z_{0},r)\subset\R^{2n+1},\quad\boldsymbol{F}:U\to\boldsymbol{F}(U)
\]
is one-to-one and it is actually a diffeomorphism. Coming back to
$T\leq\epsilon\log h^{-1}$, this finally implies in view of \eqref{eq: comparaison des normes}
the existence of $L_{0}>0$ depending only on $p$ and $M$ such that
if $h$ is small enough, the projection $\pi_{G}$ defined in \eqref{eq: projection graph}
is invertible at $(\rho,\Phi^{t}(\rho),t)\equiv(y,\eta,x,\xi,t)$
if 
\[
\rho\in T^{*}M,\quad d(\rho_{0},\rho)\leq\e^{-L_{0}T}\leq\eps_{h}=\e^{-L_{0}\epsilon\log h^{-1}}=h^{\epsilon L_{0}},\quad|t-t_{0}|\leq t_{h}=h^{\epsilon L_{0}}.
\]

\end{proof}
The open set $B_{\rho_{0}}(\eps_{h})$ is thens of size $\cO(h^{L_{0}\epsilon})$
in phase space. It follows that if $\epsilon>0$ is small enough,
we will be able to (semiclassically) microlocalize operators in such
neighborhoods and use the semiclassical calculus.

A classical consequence of the preceding proposition is the existence
of a local generating function $\varphi_{0}(t,x,\eta)$ for the flow
$\Phi^{t}$ defined near $\rho_{0}\in\Gamma$ up to times $\epsilon\log h^{-1}$.
For $(y,\eta)\in B_{\rho_{o}}(h^{\epsilon L_{0}})$, and $t=t_{0}$,
Proposition \ref{prop: Local projection} implies that there is a
first generating function $\ti\varphi_{0}(x,\eta)$ such that 
\[
\Phi^{t_{0}}:(\d_{\eta}\ti\varphi_{0}(x,\eta),\eta)\mapsto(x,\d_{x}\ti\varphi_{0}(x,\eta))
\]
and using $(x,\eta,t)$ as coordinates on $\cU(\rho_{0},t_{0},\eps)$,
we can define 
\[
\varphi_{0}(t,x,\eta)=\ti\varphi_{0}(x,\eta)-\int_{t_{0}}^{t}q(x,\xi(s,x,\eta))ds
\]
where as above, $\Phi^{t}(y,\eta)=(x,\xi)$. We readily check that
$\varphi_{0}(t,x,\eta)$ is now a generating function for the flow
for times $|t-t_{0}|\leq h^{\epsilon L_{0}}$,

\begin{equation}
y=\d_{\eta}\varphi_{0}(t,x,\eta),\ \xi=\d_{x}\varphi_{0}(t,x,\eta),\quad\Phi^{t}:(\d_{\eta}\varphi_{0}(t,x,\eta),\eta)\mapsto(x,\d_{x}\varphi_{0}(t,x,\eta))\label{eq: Flow Gen Fn}
\end{equation}
and furthermore $\varphi_{0}$ satisfies the Hamilton-Jacobi equation
\begin{equation}
\d_{t}\varphi_{0}(t,x,\eta)+q(x,\d_{x}\varphi_{0}(t,x,\eta))=0.\label{eq: ham Jac}
\end{equation}

The above construction can be done in a particular choice of coordinates
in $\cU(\rho_{0},t_{0},h^{\epsilon L_{0}})$ which is very useful
when considering the hyperbolic dynamics at stake near the trapped
set. In \cite{NoZw09_1}, Lemmas 4.3 and 4.4, Nonnenmacher and Zworski
show the existence of a coordinate chart $(y,\eta)=(y^{1},\dots,y^{n},\eta^{1},\dots,\eta^{n})$
near $\rho_{0}\equiv(y_{0},\eta_{0})\in\Gamma$ and $(x,\xi)=(x^{1},\dots,x^{n},\xi^{1},\dots,\xi^{n})$
near $(x_{0},\xi_{0})\equiv\Phi^{t_{0}}(\rho_{0})$ for which the
projection $\pi_{G}$ has a bijective differential near $(\rho_{0},\Phi^{t_{0}}(\rho_{0}),t_{0})$
and which is furthermore well adapted to the dynamics, in the sense
that 
\[
\frac{\d}{\d y^{1}}(\rho_{0})=H_{q}(\rho_{0}),\ \eta^{1}=q-1,\quad\frac{\d}{\d\xi^{1}}(\Phi^{t_{0}}(\rho_{0}))=H_{q}(\Phi^{t_{0}}(\rho_{0})),
\]
and moreover the unstable and stable spaces at $(y_{0},\eta_{0})$
are given by 
\[
E_{\rho_{0}}^{u}=\Span\left(\frac{\d}{\d y^{2}}(y_{0},\eta_{0}),\dots,\frac{\d}{\d y^{n}}(y_{0},\eta_{0})\right),\quad E_{\rho_{0}}^{s}=\Span\left(\frac{\d}{\d\eta^{2}}(y_{0},\eta_{0}),\dots,\frac{\d}{\d\eta^{n}}(y_{0},\eta_{0})\right),
\]
and similar equations for the unstable and stable subspaces at $\Phi^{t_{0}}(\rho_{0})$.
In these charts, we have time and energy coordinates for the index
1, and ``stable'' and ``unstable'' coordinates for the indices
$\geq2$. We usually write $u=(y^{2},\dots,y^{n})$ and $s_{0}=(\eta^{2},\dots,\eta^{n})$
for the stable and unstable coordinates. Now consider in these coordinates
the following family of horizontal Lagrangian leaves 
\[
\Lambda_{\eta}=\{(y,\eta),\ y\in\pi B_{\rho_{0}}(h^{\epsilon L_{0}})\},\quad(y,\eta)\in B_{\rho_{0}}(h^{\epsilon L_{0}}).
\]
Note that these Lagrangian manifolds are isoenergetic since $\Lambda_{\eta}\subset p^{-1}(1+\eta^{1})\subset T^{*}M$.
They also have nice properties when evolved by the flow $\Phi^{t}$,
and we extract from \cite{NoZw09_1} the ones which will be used in
the present article. 
\begin{prop}
\label{prop: NZ 5}(adapted from \cite{NoZw09_1}, Section 5) Let
$\rho_{0}\in\Gamma$, and $\Lambda_{\eta}$ the above family of Lagrangian
manifolds in the coordinate system adapted to the dynamics. If $0\leq t\leq T$,
the map 
\[
\pi\circ\Phi^{t}|_{\Lambda_{\eta}}:\begin{cases}
\pi\Lambda_{\eta}\to\pi\Phi^{t}(\Lambda_{\eta})\\
y\mapsto y(t)
\end{cases}
\]
is well-defined and invertible. Moreover, the differential matrix
$\frac{\d y}{\d y(t)}$ is uniformly bounded in time: 
\begin{equation}
\exists C_{M,p}>0,\ \forall t\in[0,T],\quad\lno\frac{dy}{dy(t)}\rno\leq C_{M,p}\label{eq: Uniform bounds horiz}
\end{equation}
and satisfies the following estimate on its domain of definition:
\[
C\e^{-\lambda_{t}^{+}(\rho_{0})}\leq\lva\det\frac{dy}{dy(t)}\rva\leq\frac{1}{C}\e^{-\lambda_{t}^{+}(\rho_{0})},\qquad0\leq t\leq T
\]
where $C=C(M,p)>0$ and the unstable Jacobian $\lambda_{t}^{+}$ has
been defined in Section \ref{sec: Anosov}. 
\end{prop}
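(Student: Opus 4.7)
The plan is to split the proof into three coordinated pieces: identifying the tangent-space structure of $\Lambda_\eta$, a cone-invariance argument for invertibility and the uniform bound, and a Jacobian computation along the hyperbolic splitting. In spirit this mirrors \cite{NoZw09_1}, Section 5, but care is needed to keep everything uniform up to the Ehrenfest time $T\leq\epsilon\log h^{-1}$.

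First, in the adapted chart quoted above, the frame $\d_{y^1}(\rho_0),\dots,\d_{y^n}(\rho_0)$ spans $\R H_q(\rho_0)\oplus E^u_{\rho_0}$ exactly, while the $\d_{\eta^k}(\rho_0)$ cover the energy direction plus $E^s_{\rho_0}$. Since $\Lambda_\eta$ is parametrized by $y$ with $\eta$ fixed, I have $T_{\rho_0}\Lambda_{\eta_0}=\R H_q(\rho_0)\oplus E^u_{\rho_0}$, with no stable component at the base point. Because the patch $B_{\rho_0}(h^{\epsilon L_0})$ is small and the invariant splitting is H\"older continuous on the thickened hyperbolic set $\cE^{*}M^\delta$, at every point of $\Lambda_\eta$ the tangent space still lies in a fixed narrow unstable cone $\cC^u$ around $\R H_q\oplus E^u$ (evaluated at nearby base points), provided $h$ is sufficiently small.

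Second, I would push this forward under $d\Phi^t$ and study the base projection. Strict invariance of the unstable cone under the hyperbolic flow is classical: if $\cC^u$ is narrow enough then $d\Phi^t(\cC^u)\subset\cC^u$ for $t\geq 0$, so $d\Phi^t(T\Lambda_\eta)$ stays inside $\cC^u$ at every point of $\Phi^t(\Lambda_\eta)$. Vectors in $\cC^u$ have a uniformly nondegenerate $y$-component relative to their full norm, so the projection $\pi_y:(y,\eta)\mapsto y$ restricted to $d\Phi^t(T\Lambda_\eta)$ is invertible with norm of its inverse controlled by the aperture of $\cC^u$, independently of $t\in[0,T]$ and of the base point in $\Lambda_\eta$. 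This yields simultaneously the well-definedness and invertibility of $\pi\circ\Phi^t|_{\Lambda_\eta}$ and the uniform bound \eqref{eq: Uniform bounds horiz}, with $C_{M,p}$ depending only on the hyperbolic geometry on $\Gamma$.

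Third, for the determinant I would decompose along the flow and unstable directions. Along $\R H_q$ the map $y\mapsto y(t)$ is essentially a reparametrization of the orbit through $\rho_0$, contributing an $\cO(1)$ factor. Along $E^u_{\rho_0}$, the restriction $d\Phi^t|_{E^u_{\rho_0}}:E^u_{\rho_0}\to E^u_{\Phi^t(\rho_0)}$ has determinant $\e^{\lambda_t^+(\rho_0)}$ by the very definition \eqref{eq: Def J u}. Projecting source and target onto the base via $\pi_y$ costs only uniformly bounded multiplicative constants, by the cone control from step two. Composing, the forward map $y\mapsto y(t)$ has Jacobian of size $\asymp\e^{\lambda_t^+(\rho_0)}$, and inversion gives the announced two-sided bound $C\e^{-\lambda_t^+(\rho_0)}\leq|\det(dy/dy(t))|\leq C^{-1}\e^{-\lambda_t^+(\rho_0)}$. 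The chief technical obstacle is securing these estimates uniformly up to $T\leq\epsilon\log h^{-1}$: a naive perturbative argument based on the Gronwall-type estimate \eqref{eq: flow estimates} would incur an exponential loss $\e^{CT}$ and prevent a $t$-independent constant in \eqref{eq: Uniform bounds horiz}. The essential input is hyperbolicity itself, through strict invariance and contraction of unstable cones under $d\Phi^t$, converting what would be a loss growing with $t$ into a genuinely bounded geometric constant depending only on $M$ and $p$. Matching the aperture of $\cC^u$ with the patch size $h^{\epsilon L_0}$ and the thickness $\delta$ of $\cE^{*}M^\delta$ is where most of the care is needed.
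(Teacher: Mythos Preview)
Your proposal is correct and matches the approach the paper intends. Note that the paper does not actually prove this proposition: it is stated as ``adapted from \cite{NoZw09_1}, Section 5'' and the only commentary offered is the remark that it is an instance of the inclination lemma \cite{KaHa95}, namely that the horizontal Lagrangians $\Lambda_\eta$ are transverse to the stable foliation and therefore get stretched along the unstable manifold under $\Phi^t$. Your three-step argument (tangent space of $\Lambda_\eta$ equals $\R H_q\oplus E^u$ at $\rho_0$, strict invariance of a narrow unstable cone under $d\Phi^t$ giving the uniform projection bound, and the Jacobian identification with $\e^{\lambda_t^+(\rho_0)}$ up to bounded factors) is exactly the content of \cite{NoZw09_1}, Proposition~5.1 and Lemma~5.2, and the paper itself later rehearses precisely this cone argument in the proof of Lemma~\ref{lem: Symbol Derivatives} when bounding $\d\xi_{i+j}/\d\xi_j$. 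Your emphasis that hyperbolicity, rather than the crude Gronwall bound \eqref{eq: flow estimates}, is what makes the constant in \eqref{eq: Uniform bounds horiz} independent of $t$ is exactly the point.
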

This proposition is similar to what is known as the inclination lemma
\cite{KaHa95}: in the chosen coordinate system, these horizontal
Lagrangian are transverse to the stable foliation, which explains
their stretching along the unstable manifold when evolved by the flow.
This property will be of crucial importance when determining estimates
in $C^{k}$ norm for the symbol of an oscillatory integral representation
of $U(t)$ up to time $T$.

Our dynamical setup near the trapped set is then as follows. Let $\rho_{0}\in\Gamma$,
and consider an orbit $\gamma$ issuing from $\rho_{0}$ and of length
$\ell$, with $\ell\leq\epsilon\log h^{-1}$. Let $V_{0}=B_{\rho_{0}}(h^{\epsilon L_{0}})\subset T^{*}M$
be an open neighborhood of $\rho_{0}$ of size $h^{L_{0}\epsilon}$
for some $L_{0}>0$ large enough, depending on $M$ and $p$ but independent
of $\epsilon$, so that Proposition \ref{prop: Local projection}
applies. In particular, we can find a sequence of times $(t_{k})_{0\leq k\leq K}$
with $t_{0}=0$, $t_{K}=\ell$, $K=\cO(h^{-\epsilon L_{0}}\log h^{-1})$
and a chain of neighborhoods 
\begin{equation}
V_{k}=\bigcup_{t\in[t_{k}-h^{\epsilon L_{0}},t_{k}+h^{\epsilon L_{0}}]}\Phi^{t}(V_{0})\label{eq: open V}
\end{equation}
with a coordinate set $J_{k}:V_{0}\times V_{k}\to\R^{2n}\times\R^{2n}$
such that there is a local generating function $\varphi^{k}(t,x,\eta)$
for the flow for times near $t_{k}$, more precisely: 
\begin{equation}
\Phi^{t}:\begin{cases}
V_{0}\mapsto\Phi^{t}(V_{0}),\ \ t\in[t_{k}-h^{\epsilon L_{0}},t_{k}+h^{\epsilon L_{0}}]\\
(\d_{\eta}\varphi^{k}(t,x,\eta),\eta)\to(x,\d_{x}\varphi^{k}(t,x,\eta)
\end{cases}\label{eq: varphi k}
\end{equation}
and $\varphi^{k}$ also satisfies the Hamilton-Jacobi equation \eqref{eq: ham Jac}.

\section{Proof of Proposition \ref{prop: Long Time Trace Formula}\label{sec: Big Stat Phase} }

To prove Proposition \ref{prop: Long Time Trace Formula}, we will
adopt the same method as the original one developed by Duistermaat
and Guillemin \cite{DuGu75}, namely we will represent the Schwartz
kernel of $U(t)$ locally by oscillatory integrals, and perform a
stationary phase expansion in the integral on the right hand side
of \eqref{eq: Integral Trace}. There is however an important difference
with the situation considered in \cite{DuGu75}, since our test function
has support localized around Ehrenfest times $\leq\epsilon\log\lambda$.
In this section, without further notifications, $C$ will denote a
positive constant depending only on $M$ and $p$.

\subsection{Microlocalization and local integral representations\label{sub: Local-integral-representations}}

Let $\rho_{0}\in\Gamma$, and $\gamma$ the orbit issuing from $\rho_{0}$
for positive times. Let $\chi\in C_{0}^{\infty}(T^{*}M)$ being equal
to one near $\rho_{0}$. We assume that $\chi$ is supported in a
neighborhood of $\rho_{0}$ of size $\cO(h^{\epsilon L_{0}})$ where
$L_{0}>0$ is as in the preceding section, namely it is large enough
so that Propositions \ref{prop: Local projection} and \ref{prop: NZ 5}
hold true. We again stress that $L_{0}$ is independent of $\epsilon$,
so if $\epsilon<\delta L_{0}^{-1}$, we can ensure that $\chi\in S_{\delta}^{0,0}(T^{*}M)$
for some $\delta<1/2$. Defining $V_{0}=\supp\chi$, we can find local
generating function for the flow restricted to $V_{0}$, up to times
$T\leq\epsilon\log h^{-1}$. Let us call $A_{0}^{w}=\Op_{h}(\chi)$.
The goal of this section is to build a local integral representation
of $U(t)A_{0}^{w}$ for times $t\in[0,T]$.

If $u\in L^{2}(M)$, we can use a partition of unity and assume without
loss of generality that $u$ is compactly supported, so that in a
local chart, 
\[
A_{0}^{w}u(y)=\frac{1}{(2\pi h)^{n}}\int\e^{\frac{\i}{h}\scal{y-z}{\eta}}\chi(\frac{y+z}{2},\eta)u(z)d\eta dz=\int\e^{\i\frac{\scal y{\eta}}{h}}u_{\eta}(y)d\eta
\]
where 
\[
u_{\eta}(y)\defeq\frac{1}{(2\pi h)^{n}}\int\e^{-\frac{\i}{h}\scal z{\eta}}\chi(\frac{y+z}{2},\eta)u(z)dz.
\]
In this way, we have decomposed $A_{0}^{w}u$ into a sum of ``momentum''
Lagrangian states $\e^{\i\frac{\scal y{\eta}}{h}}u_{\eta}(y)$ depending
on a parameter $\eta$. Note that because $\chi\in C_{0}^{\infty}(T^{*}M)$,
the parameter $\eta$ belongs to a compact set so this integral is
always well defined. These Lagrangian states are purely horizontal,
their Lagrangian manifold being given by 
\[
\Lambda_{\eta}^{0}=\{(y,\eta)\in T^{*}M,\ y\in\supp u_{\eta}\}.
\]
Hence $V_{0}=\supp\chi$ is foliated by such horizontal Lagrangian
manifolds, and this foliation is used on the microlocal level in the
above decomposition of $A_{0}^{w}u(y)$. The important fact here is
that in the coordinates adapted to the dynamics around $\rho_{0}$,
the manifolds $\Lambda_{\eta}^{0}$ are transverse to the stable foliation
and the inclination property applies when evolving these manifolds
under the Hamiltonian flow.

For each time-window $t\in[\tau_{k},\tau_{k+1}]$ we will find an
oscillatory integral representation of the operator $U(t)A_{0}^{w}$,
which will be denoted by $U_{k}(t)A_{0}^{w}$, such that 
\[
\Tr\left(\Pi(U(t)-U_{k}(t))A_{0}^{w}\right)=\cO(h^{\infty}).
\]
The next proposition makes this much more precise, and is the key
technical tool needed to prove our main theorem. 
\begin{prop}
\label{prop: The big one}Let $\gamma$ be an orbit of length $\ell\leq T$
starting at $\rho_{0}\in\cE^{*}M$. Let $\chi\in C_{0}^{\infty}(T^{*}M)$,
$V_{0}=\supp\chi$ and $A_{0}^{w}$ be as above. Let $(V_{k})_{0\leq k\leq K}$,
$(\varphi^{k})_{0\leq k\leq K}$ be as in \eqref{eq: open V} and
\eqref{eq: varphi k}. For any $N\in\N$ large enough, $0\leq k\leq K$
and $t\in[t_{k}-h^{\epsilon L_{0}},t_{k}+h^{\epsilon L_{0}}]$, there
is a sequence of local Fourier integral operators $U_{k}(t)$ and
$\cR_{k}^{N}(t)$ such that 
\begin{equation}
U_{k}(t)A_{0}^{w}=U(t)A_{0}^{w}+\cR_{k}^{N}(t)A_{0}^{w}\quad\mbox{and}\quad\Tr(\Pi\cR_{k}^{N}(t)A_{0}^{w})=\cO(h^{N/3}).\label{eq:Approx FIO}
\end{equation}
Furthermore, the operators $U_{k}(t)$ are of the form 
\begin{equation}
U_{k}(t)A_{0}^{w}u(x)=\int\e^{\frac{\i}{h}\varphi^{k}(t,x,\eta)-\scal y{\eta}}a_{h}^{k}(t,x,\eta)(A_{0}^{w}u)(y)dyd\eta,\quad t\in[t_{k}-h^{\epsilon L_{0}},t_{k}+h^{\epsilon L_{0}}],\label{eq: FIO Local Int}
\end{equation}
where the symbol $a_{h}^{k}$ is compactly supported and admits the
asymptotic expansion: 
\[
a_{h}^{k}(t,x,\eta)=\sum_{j=0}^{N-1}h^{j}a_{j}^{k}(t,x,\eta),\qquad N\geq1.
\]
The principal symbol $a_{0}^{k}$ is given by 
\[
a_{0}^{k}=\i^{\sigma_{k}}|\det\d_{x\eta}^{2}\varphi^{k}|^{\fr12}
\]
where $\sigma_{k}$ is an integer of Maslov type (see \eqref{eq: principal symbol}
below). 
\end{prop}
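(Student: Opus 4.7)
The plan is to perform a WKB / Fourier integral operator construction in the time window $|t-t_k|\le h^{\epsilon L_0}$ using the local generating function $\varphi^k$ supplied by Proposition \ref{prop: Local projection}, and to control carefully all error and symbol-class losses produced by the exponential growth of derivatives of the flow over times $T\le\epsilon\log h^{-1}$. The decomposition of $A_0^w u$ into horizontal Lagrangian states $e^{i\langle y,\eta\rangle/h}u_\eta(y)$ from the preceding subsection makes the ansatz natural: $U_k(t)A_0^w$ is forced to have, at the level of the graph, the Lagrangian parametrized by $\varphi^k$.

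Fix $k$ and make the ansatz \eqref{eq: FIO Local Int} with $a_h^k\sim\sum_{j\ge0}h^ja_j^k$. Substituting into the equation $(hD_t+hQ)U_k(t)A_0^w=0$ and using $\sigma_{\mathrm{sub}}(Q)=0$ yields, at order $h^0$, the Hamilton--Jacobi equation \eqref{eq: ham Jac}, which holds by construction of $\varphi^k$. At order $h^1$ one obtains a first--order transport equation along the Hamiltonian vector field $H_q$ for $a_0^k$, and at order $h^{j+1}$, inhomogeneous transport equations for $a_j^k$ driven by $a_0^k,\ldots,a_{j-1}^k$. Solving for $a_0^k$ by the method of characteristics gives the half-density $i^{\sigma_k}|\det\partial^2_{x\eta}\varphi^k|^{1/2}$. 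The Maslov integer $\sigma_k$ appears when composing the short--time constructions on consecutive windows via stationary phase, accounting for the signature of the relevant Hessian each time one passes from the coordinates of $\varphi^{k-1}$ to those of $\varphi^k$.

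The heart of the matter is the symbol estimates. The flow bounds \eqref{eq: flow estimates} imply $|\partial^\alpha\varphi^k|\le C_\alpha h^{-C_\alpha\epsilon}$ on $\pi_x V_k$, so that each $a_j^k$, produced by the transport equations, lies in an $h$--dependent symbol class $S_{\delta}^{0,0}$ with $\delta=C\epsilon$. The inclination property of Proposition \ref{prop: NZ 5} ensures that $|\det\partial^2_{x\eta}\varphi^k|^{1/2}$ stays uniformly controlled (the horizontal Lagrangian leaves stay transverse to the stable foliation and the Jacobian matrix $dy/dy(t)$ is uniformly bounded by \eqref{eq: Uniform bounds horiz}). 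For $\epsilon$ small enough, $\delta<1/2$ and the usual semiclassical calculus applies, so that \eqref{eq:Approx FIO} can be given a meaning.

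The main obstacle is the error estimate for $\mathcal{R}_k^N=U_k-U$. Duhamel's formula yields
\[
\mathcal{R}_k^N(t)A_0^w=\frac{i}{h}\int_0^tU(t-s)(hD_s+hQ)U_k(s)A_0^w\,ds,
\]
where, by construction of the truncated symbol $a_h^k=\sum_{j<N}h^ja_j^k$, the integrand has operator norm $\cO(h^{N(1-C\epsilon)})$ (the $h^{-C\epsilon}$ losses coming from the symbol class $S_\delta^{0,0}$ and from the growth of $\varphi^k$ and its derivatives along the orbit). Integrating over $|t|\le T\le\epsilon\log h^{-1}$ introduces at most a logarithmic factor. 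Taking the trace against $\Pi$ then costs at most an $h^{-n}$ factor from the diagonal of the Schwartz kernel restricted to the compact set $\supp\Pi$. The total bound is therefore $\cO(h^{N(1-C\epsilon)-n})$, and choosing $\epsilon$ small and $N$ large gives $\cO(h^{N/3})$, the factor $1/3$ being a generous safety margin absorbing all accumulated $h^{-C\epsilon}$ losses.
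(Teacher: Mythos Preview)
Your overall route---WKB ansatz with phase $\varphi^k$, transport equations, principal symbol $\i^{\sigma_k}|\det\d_{x\eta}^2\varphi^k|^{1/2}$, Duhamel for the remainder---is the paper's. But the remainder argument has a real gap. The Duhamel identity you write,
\[
\cR_k^N(t)A_0^w=\frac{\i}{h}\int_0^tU(t-s)(hD_s+hQ)U_k(s)A_0^w\,ds,
\]
is meaningless as stated: $U_k(s)$ is defined only on the narrow window $|s-t_k|\le h^{\epsilon L_0}$, since $\varphi^k$ only generates the flow for times close to $t_k$ (this is exactly the content of Proposition~\ref{prop: Local projection}). You cannot integrate from $0$ to $t\approx t_k$ an object that exists only on an interval of length $2h^{\epsilon L_0}$, and there is no single global phase to fall back on.

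The paper's construction is inductive in $k$. At each transition time $\tau_{k-1}$ one rewrites $U_{k-1}(\tau_{k-1})$ with the new phase $\varphi^k$ via H\"ormander's change-of-phase procedure, producing initial data $a_j^k(\tau_{k-1})=\sum_{\nu\le j}Z_{j,\nu}^k a_{j-\nu}^{k-1}(\tau_{k-1})$ (with $Z_{j,\nu}^k$ differential of order $2\nu$) together with a transition error $\cS_{k-1}^N$. Duhamel is then applied only over the short window, and the recursion
\[
\cR_k^N(\tau_{k-1}+t')=U(t')\bigl(\cS_{k-1}^N+\cR_{k-1}^N(\tau_{k-1})\bigr)+\int_0^{t'}U(t'-s)\cZ_k^N(s)\,ds
\]
unwinds $\cR_k^N$ into a sum of $K=\cO(h^{-\epsilon L_0}\log h^{-1})$ pieces. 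Bounding this sum in trace norm requires the sharp estimate $\|a_j^k\|_{C^\ell}\le C_{j,\ell}(k+1)^{\ell+3j}$ (Lemma~\ref{lem: Symbol Derivatives}), whose proof uses the uniform boundedness of the iterated Jacobians $\d x_i/\d x_k$ and $\d\xi_i/\d\eta$ coming from hyperbolicity (Proposition~\ref{prop: NZ 5} and its stable-direction counterpart). Your blanket claim ``$a_j^k\in S_\delta^{0,0}$ with $\delta=C\epsilon$'' does not separate the dependence on the step index $k$ from the symbol order $j$, and without that separation one cannot show that the $K$ accumulated transition errors stay $\cO(h^{N/3})$.
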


\subsubsection{Sketch of proof of Proposition \ref{prop: The big one} }

The proof of the preceding proposition relies on a standard WKB method
adjusted for our purpose. Let $\gamma(t)\subset\cE^{*}M$ with $0\leq t\leq\epsilon\log h^{-1}$.
From Section \ref{sub: Ham Jac}, we have at our disposal a sequence
of open patches in $T^{*}M$ that cover $\gamma(t)$, centered at
$t_{0},\dots,t_{K}$ and for which there is a generating function
of the flow in the sense of \eqref{eq: Flow Gen Fn}. If $A_{0}^{w}$
is an operator that microlocalizes in the first patch around $\rho_{0}=\gamma(0)$,
then $U(t_{i})A_{0}^{w}$ is microlocalized in the patch centered
around $\Phi^{t_{i}}(\rho_{0})$. In each patch around $\gamma(t_{i})$,
the WKB procedure can produce a local integral representation for
$U(t_{i})$ thanks to the generating function and the usual Duistermaat-Hörmander
transport equations for the symbols. By identifying two such representations
at the intersection of two consecutive patches modulo a remainder
term, we can obtain at the end a local integral representation valid
up to time $\epsilon\log h^{-1}$. For our purpose to take a trace,
the principal symbol must be computed exactly for all times (namely,
including the Maslov factors), and the remainders must also be small
in the trace class norm. The main difficulty in this construction
is due to the fact that $t$ can be of order $\epsilon\log h^{-1}$,
which means that all the symbols and remainder terms must be controlled
as $t\sim\epsilon\log h^{-1}$. In particular, the hyperbolicity of
the trapped set requires the use of symbol classes $S_{\epsilon C}^{0,0}$
for a given constant $C>0$ that depends only on $M$ and the symbol
$p$, as it has already been noted above. But provided that if $\epsilon$
is small enough, we are always in tractable symbol classes $S_{1/2^{-}}^{0,0}$
and microlocal calculus is then always available.

\subsubsection{Changing symbols and phase functions along a bicharacteristic}

The content of the proposition is standard if $k=0$ and $t=\cO(1)$,
see \cite{Zwo10_semiAMS}, Chapter 10. Writing $u_{0}=A_{0}^{w}u$
we have 
\[
U(t)u_{0}=\frac{1}{(2\pi h)^{n}}\int\e^{\frac{\i}{h}(\varphi^{0}(t,x,\eta)-\langle y,\eta\rangle)}a^{0}(t,x,\eta,h)u_{0}(y)dyd\eta
\]
where 
\begin{equation}
a^{0}\sim\sum_{j=0}^{\infty}h^{j}a_{j}^{0}\,,\quad a_{0}^{0}(t,x,\eta)=|\det\d_{x\eta}^{2}\varphi^{0}(t,x,\eta)|^{\fr12},\quad\sigma_{0}=0,\label{eq: a_0 first}
\end{equation}
and the symbols $a_{j}^{0}$ are smooth and compactly supported in
all variables (in particular, $x\in\pi\Phi^{t}(V_{0})$, $\pi:T^{*}M\to M$).
We take 
\[
U_{0}(t)u_{0}=\frac{1}{(2\pi h)^{n}}\int\e^{\frac{\i}{h}(\varphi^{0}(t,x,\eta)-\langle y,\eta\rangle)}\sum_{j=0}^{N-1}h^{j}a_{j}^{0}(t,x,\eta,h)u_{0}(y)d\eta dy
\]
for arbitrary $N\in\N$. As a result, there is a (semiclassical) local
Fourier integral operator $\cZ_{0}^{N}(t)$ such that 
\begin{equation}
(hD_{t}+hQ)U_{0}(t)A_{0}^{w}=\cZ_{0}^{N}(t)A_{0}^{w},\quad t\in[-h^{\epsilon L_{0}},h^{\epsilon L_{0}}].\label{eq: Chain 2-1}
\end{equation}
The operator $\cZ_{0}^{N}(t)$ arises from the usual transport equations,
it depends on the symbols $a_{0}^{0}(t),\dots,a_{N-1}^{0}(t)$ and
their derivatives and since its symbol is uniformly of order $\cO(h^{N})$
on its support, we get that $\Tr(\Pi\cZ_{0}^{N}(t)A_{0}^{w})=\cO(h^{N-n})=\cO(h^{N/2})$
if $N$ is sufficiently large. Duhamel formula gives 
\[
\cR_{0}^{N}(t)=\int_{0}^{t}U(t-s)\cZ_{0}^{N}(s)ds
\]
and \eqref{eq:Approx FIO} follows from the trace estimate of $\Pi\cZ_{0}^{N}(t)A_{0}^{w}$
.

We will now construct $\cU_{k}$ and $\cR_{k}^{N}$ by induction,
assuming that $U_{k-1},\cR_{k-1}^{N}$ have been constructed for time
$t\in[t_{k-1}-h^{\epsilon L_{0}},t_{k-1}+h^{\epsilon L_{0}}]$. For
this we will regularly change the phase function along the chain of
neighborhoods $V_{k}$ defined above, so that locally in $t,x,\eta$,
we still have a good oscillatory integral representation. For all
$k\leq K$, the times $t_{k}$ can be chosen so that $V_{k-1}$ and
$V_{k}$ always intersect, for instance around a point $\rho_{k-1}$
such that 
\[
\rho_{k-1}=\Phi^{\tau_{k-1}}(\rho_{0}),\qquad\tau_{k-1}\in[t_{k-1}-h^{\epsilon L_{0}},t_{k-1}+h^{\epsilon L_{0}}].
\]
Note that $\Phi^{\tau_{k-1}}(V_{0})\subset V_{k-1}\cap V_{k}$.

We start by defining the operator $U_{k}(\tau_{k-1})$, which will
be our initial data from which $U_{k}(\tau_{k-1}+s)$ for $s\geq0$
will be constructed. For $N>1$, our induction hypothesis allows to
write

\[
(U_{k-1}(t)u_{0})(x)=\frac{1}{(2\pi h)^{n}}\int\e^{\frac{\i}{h}(\varphi^{k-1}(t,x,\eta)-\langle y,\eta\rangle)}a_{h}^{k-1}(t,x,\eta,h)u_{0}(y)dyd\eta
\]
for $t\in[t_{k-1}-h^{\epsilon L_{0}},t_{k-1}+h^{\epsilon L_{0}}]$,
where 
\[
a_{h}^{k-1}=\sum_{j=0}^{N-1}h^{j}a_{j}^{k-1}
\]
is compactly supported in $(x,\eta)$. We want first to change the
phase function in the operator $U_{k-1}(\tau_{k-1})$, and use $\varphi^{k}$
instead of $\varphi^{k-1}$: this is possible since at time $\tau_{k-1}$,
these phase functions are both generating functions for the flow with
the same initial conditions.

To change the phase function from $\varphi^{k-1}$ to $\varphi^{k}$,
we apply the original method developed by Hörmander (reduction of
the number of fibre variables, fiber-preserving mappings and adjunction
of quadratic forms). This process is fairly long, so we refer the
reader to the thorough exposition in the original article \cite{Hor71_Fio1},
in particular Sections 3.1 and 3.2 -- see also \cite{Dui96}. This
is also precisely the method used in \cite{DuGu75}, p. 68. As a result,
there is a sequence of symbols $(\ti a_{j}^{k})_{0\leq j<N}$ which
are determined by equations of the form 
\[
\ti a_{j}^{k}(x,\eta)=\sum_{\nu=0}^{j}Z_{j,\nu}^{k}a_{j-\nu}^{k-1}(\tau_{k-1},x,\eta)
\]
where the differential operators $Z_{j,\nu}^{k}$ are of order $2\nu$
for $0\leq\nu\leq j<N$ with coefficients independent of $h$ and
uniformly bounded with respect to $k$, such that we can define a
local Fourier integral $\ti U_{k}$ by the formula 
\[
\ti U_{k}(x,y)=\frac{1}{(2\pi h)^{n}}\int\e^{\frac{\i}{h}(\varphi^{k}(\tau_{k-1},x,\eta)-\langle y,\eta\rangle)}\sum_{j=0}^{N-1}h^{j}\ti a_{j}^{k}(x,\eta)d\eta,
\]
which defines locally the same distribution as $U_{k-1}(\tau_{k-1})$
up to a remainder of arbitrary order. In particular, there is another
Fourier integral operator $\cS_{k-1}^{N}$ with a compactly supported
symbol $s_{k-1}^{N}(x,\eta,h)$ that satisfies 
\begin{equation}
\lp\ti U_{k}-U_{k-1}(\tau_{k-1})\rp A_{0}^{w}=\cS_{k-1}^{N}A_{0}^{w}\label{eq: def SkN}
\end{equation}
and its symbol $s_{k-1}^{N}$ obey estimates of the form: 
\begin{equation}
\|s_{k-1}^{N}\|_{C^{0}}\leq h^{N}\sum_{j=0}^{N-1}C_{j}\|a_{j}^{k-1}\|_{C^{2(N-j)+m}},\quad0\leq m\leq n+1,\quad C_{j}>0.\label{eq: SNk symbol}
\end{equation}
The celebrated transition equation for the principal symbols given
by 
\begin{align}
\ti a_{0}^{k} & =\e^{\frac{\i\pi}{4}(\sgn\d_{\eta}^{2}\varphi^{k-1}-\sgn\d_{\eta}^{2}\varphi^{k})}a_{0}^{k-1}(\tau_{k-1})\sqrt{D_{k,k-1}},\label{eq: Transition Principal}
\end{align}
where $\sqrt{D_{k,k-1}}$ is a positive factor which has an interpretation
as a quotient of half densities on the Lagrangian manifold $\Lambda\subset T^{*}(M\times M)$
formed by the local graph of $\Phi^{\tau_{k}}$, where $T^{*}(M\times M)$
is endowed with the usual twisted symplectic form \cite[Section 3.2]{Hor71_Fio1}.
In particular, since $a_{0}^{0}>0$ we have 
\[
\ti a_{0}^{k}=|\ti a_{0}^{k}|\prod_{i=1}^{k}\e^{\frac{\i\pi}{4}(\sgn\d_{\eta}^{2}\varphi^{i-1}-\sgn\d_{\eta}^{2}\varphi^{i})}.
\]
We now simply set 
\[
U_{k}(\tau_{k-1})\defeq\ti U_{k},\qquad a_{j}^{k}(\tau_{k-1},x,y)\defeq\ti a_{j}^{k}(x,\eta),\quad0\leq j\leq N-1.
\]
We have then a transition at time $\tau_{k-1}$ given by 
\begin{equation}
U_{k}(\tau_{k-1})A_{0}^{w}=U_{k-1}(\tau_{k-1})A_{0}^{w}+\cS_{k-1}^{N}A_{0}^{w}\label{eq: Chain 1}
\end{equation}
where $U_{k}(\tau_{k-1})$ is exactly of the form \eqref{eq: FIO Local Int}.

\subsubsection{Computation of the symbol from the transport equations}

It remains to define $U_{k}(\tau_{k-1}+t)$ for $\tau_{k-1}+t\in[t_{k}-h^{\epsilon L_{0}},t_{k}+h^{\epsilon L_{0}}]$.
In practice, we will use the symbol $\ti a^{k}=a^{k}(\tau_{k-1})$
as an initial data, and show that there is a local Fourier integral
operator $\cZ_{k}^{N}(t)$ such that we have the following evolution
problem for the unknown symbol $a^{k}(t)$: 
\[
\begin{cases}
(hD_{t}+hQ)U_{k}(t)u_{0}=\cZ_{k}^{N}(t)u_{0}\\
U_{k}(\tau_{k-1})u_{0}=U_{k-1}(\tau_{k-1})u_{0}+\cS_{k-1}^{N}u_{0}.
\end{cases}
\]
The method to solve these equations is standard \cite{Dui96,Zwo10_semiAMS},
but note however that in our case $t$ can ultimately depends on $h$,
so we have to control the whole process in time as well. To estimate
$(hD_{t}+hQ)U_{k}$, we write 
\[
\psi^{k}(t,x,y,\eta)=\varphi^{k}(t,x,\eta)-\scal y{\eta}
\]
and compute: 
\[
hD_{t}\e^{\frac{\i}{h}\psi^{k}(t,x,y,\eta)}a^{k}(t,x,\eta)=\d_{t}\varphi^{k}\e^{\frac{\i}{h}\psi^{k}(t,x,y,\eta)}a^{k}(t,x,\eta)+\e^{\frac{\i}{h}\psi^{k}(t,x,y,\eta)}\frac{h}{\i}\d_{t}a^{k}(t,x,\eta).
\]
From now on we fix $\eta$ and $y$ as parameters and consider $hQ\e^{\frac{\i}{h}\psi^{k}}a^{k}$.
Let us write 
\[
hQ=q^{w}+h^{2}q_{2}^{w}
\]
where the exponent $w$ is a shorthand to denote the Weyl quantization,
$q=\sqrt{p}\in S_{0}^{1,0}$ is the principal symbol of $hQ$, and
$q_{2}\in S_{0}^{-1,0}$. Since $q$ is homogeneous of order 1 in
$\xi$, a straightforward computation using the Weyl quantization
of linear symbols gives 
\begin{align*}
q^{w}\e^{\frac{\i}{h}\psi^{k}}a^{k} & =\e^{\frac{\i}{h}\psi^{k}}\left(q(x,\d_{x}\varphi^{k})+\frac{h}{\i}(X+\frac{1}{2}\div X)\right)a^{k}
\end{align*}
where 
\[
X=\sum_{j=1}^{n}\partial_{\xi_{j}}q(x,\partial_{x}\varphi^{k})\partial_{x_{j}}.
\]
Taking into account the eikonal equation \eqref{eq: ham Jac}, we
have altogether 
\[
(hD_{t}+hQ)\e^{\frac{\i}{h}\psi^{k}}a^{k}=\e^{\frac{\i}{h}\psi^{k}}(hD_{t}a^{k}+\frac{h}{\i}(X+\frac{1}{2}\div X)a^{k}+h^{2}q_{2}^{w}a_{k}).
\]
The last term is of lower order term compared to $h(D_{t}a^{k}-\i(X+\frac{1}{2}\div X)a^{k})$,
so we can impose the Duistermaat-Hörmander transport equations that
must be satisfied by the amplitude $a^{k}$: 
\[
\begin{cases}
(\d_{t}+X+\frac{1}{2}\div X)a_{0}^{k}=0\\
(\d_{t}+X+\frac{1}{2}\div X)a_{j}^{k}=\frac{h}{\i}q_{2}^{w}a_{j-1}^{k}
\end{cases}
\]
with initial conditions 
\[
a_{j}^{k}(\tau_{k-1})=\sum_{\nu=0}^{j}Z_{j,\nu}^{k}a_{j-\nu}^{k-1}(\tau_{k-1}).
\]
If the above transport equations are satisfied, we end up with 
\begin{align}
(hD_{t}+hQ)\left(\frac{1}{(2\pi h)^{n}}\int\e^{\frac{\i}{h}\psi^{k}(t,x,y,\eta)}\sum_{j=0}^{N-1}h^{j}a_{j}^{k}(t,x,\eta)d\eta\right)\nonumber \\
=\frac{h^{N}}{(2\pi h)^{n}}\int\e^{\frac{\i}{h}\psi^{k}(t,x,y,\eta)}q_{2}^{w}a_{N-1}^{k}d\eta & \defeq\cZ_{k}^{N}(t).\label{eq: ZkN}
\end{align}
It then remains to solve the transport equations. For this, consider
the two Lagrangian manifolds 
\[
\Lambda_{\eta}^{0}=\{(y,\eta):y\in\pi V_{0}\},\quad\Lambda_{\eta}(t)=\{(x,\d_{x}\varphi^{k}(t,x,\eta)):x\in\pi V_{k}\}
\]
where by construction 
\[
\Lambda_{\eta}^{0}(t)=\Phi^{t}(\Lambda_{\eta}^{0}).
\]
Define also the flow 
\[
\phi_{s,s'}^{k}:\begin{cases}
\pi\Lambda_{\eta}(s)\to\pi\Lambda_{\eta}(s+s')\\
x\mapsto\pi\Phi^{s'}(x,\d_{x}\varphi^{k}(s,x,\eta)).
\end{cases}
\]
The first transport equation express the classical fact that seen
as a half-density on $\Lambda_{\eta}(t)$ parametrized by $x$, the
amplitude $a_{0}^{k}|dx|^{\fr12}$ is invariant by this flow since
\[
(\d_{t}+X+\frac{1}{2}\div X)a_{0}^{k}|dx|^{\fr12}=(\d_{t}+\cL_{X})\lp a_{0}^{k}|dx|^{\fr12}\rp
\]
where $\cL_{X}$ denotes the Lie derivative, so 
\[
(\phi_{\tau_{k-1},t}^{k})^{*}a_{0}^{k}=a_{0}^{k}(\tau_{k-1},x)|dx|^{\fr12}\ \Leftrightarrow\ a_{0}^{k}(\tau_{k-1}+t,x)=a_{0}^{k}(\tau_{k-1},(\phi_{\tau_{k-1},t}^{k})^{-1}(x))|\det d_{x}\phi_{\tau_{k-1},t}^{k}|^{-\fr12}.
\]
In particular, since $ $$(\phi_{\tau_{k-1},t}^{k})^{-1}$ maps $x$
to $x_{k-1}$, we have 
\[
\prod_{j=0}^{k-2}(\phi_{\tau_{j},\tau_{j+1}}^{k})^{-1}(\phi_{\tau_{k-1},t}^{k})^{-1}:x\mapsto\d_{\eta}\varphi^{k}(t,x,\eta)
\]
and an immediate induction using the previous equation and \eqref{eq: Transition Principal}
shows that 
\begin{equation}
a_{0}^{k}=\i^{\sigma_{k}}\prod_{i=0}^{k}|\det d_{x}\phi_{\tau_{i-1},\delta t_{i}}^{i}|^{-\fr12}=\i^{\sigma_{k}}\lva\det\d_{x\eta}^{2}\varphi^{k}\rva^{\fr12}\label{eq: principal symbol}
\end{equation}
where 
\[
\i^{\sigma_{k}}=\prod_{i=1}^{k}\e^{\frac{\i\pi}{4}(\sgn\d_{\eta}^{2}\varphi^{i-1}-\sgn\d_{\eta}^{2}\varphi^{i})}\,.
\]
By convenience, we will write 
\[
a_{0}^{k}(\tau_{k-1}+t)=\bT_{\tau_{k-1}}^{t}a_{0}^{k}(\tau_{k-1})
\]
and call $\bT$ the transport operator. The higher order terms are
easily obtained by 
\begin{equation}
a_{j}^{k}(\tau_{k-1}+t)=\bT_{\tau_{k-1}}^{t}a_{j}^{k}(\tau_{k-1})+\int_{0}^{t}\bT_{\tau_{k-1}+s}^{t-s}q_{2}^{w}a_{j-1}^{k}(\tau_{k-1})ds\label{eq: Transport Higher}
\end{equation}
and we can compute the time-dependant symbol as long as we stay in
the framework described at the end of Section \ref{sub: Ham Jac},
namely for $t$ such that $\tau_{k-1}+t\in[t_{k}-h^{\epsilon L_{0}},t_{k}+h^{\epsilon L_{0}}]$.

To evaluate the difference $U_{k}(\tau_{k-1}+t)-U(\tau_{k-1}+t),$
we proceed by induction again: assuming that 
\[
U_{k-1}(t)A_{0}^{w}=U(t)A_{0}^{w}+\cR_{k-1}^{N}(t)A_{0}^{w}
\]
holds true for $t\in[t_{k-1}-h^{\epsilon L_{0}},t_{k-1}+h^{\epsilon L_{0}}]$,
we will construct $\cR_{k}^{N}(t)$ for $t\in[t_{k}-h^{\epsilon L_{0}},t_{k}+h^{\epsilon L_{0}}]$.
If $t=\tau_{k-1}$, it is natural to define in view of \eqref{eq: Chain 1}:
\begin{equation}
\cR_{k}^{N}(\tau_{k-1})\defeq\cS_{k-1}^{N}+\cR_{k-1}^{N}(\tau_{k-1}).\label{eq: Transition RkN}
\end{equation}
Hence we can rewrite the equation satisfied by $U_{k}(t)$: 
\[
\begin{cases}
(hD_{t}+hQ)U_{k}(t)A_{0}^{w}=\cZ_{k}^{N}(t)A_{0}^{w}\\
U_{k}(\tau_{k-1})A_{0}^{w}=U(\tau_{k-1})A_{0}^{w}+\cR_{k}^{N}(\tau_{k-1})A_{0}^{w}.
\end{cases}
\]
This allow to compare $U_{k}(t)A_{0}^{w}$ with $U(t)A_{0}^{w}$ for
$t\defeq\tau_{k-1}+t'\in[t_{k}-h^{\epsilon L_{0}},t_{k}+h^{\epsilon L_{0}}]$
thanks to the Duhamel formula: 
\begin{align*}
U_{k}(\tau_{k-1}+t')A_{0}^{w} & =U(t')U_{k}(\tau_{k-1})A_{0}^{w}+\int_{0}^{t'}U(t'-s)\cZ_{k}^{N}(s)A_{0}^{w}ds\\
 & =U(\tau_{k-1}+t')A_{0}^{w}+U(t')\cR_{k}^{N}(\tau_{k-1})A_{0}^{w}+\int_{0}^{t'}U(t'-s)\cZ_{k}^{N}(s)A_{0}^{w}ds.
\end{align*}
From this we define 
\begin{equation}
\cR_{k}^{N}(\tau_{k-1}+t')=U(t')\cR_{k}^{N}(\tau_{k-1})+\int_{0}^{t'}U(t'-s)\cZ_{k}^{N}(s)ds.\label{eq: Evolution RkN}
\end{equation}

\subsubsection{Estimation of the remainder terms in the trace class norm}

To complete the proof of the proposition, we now have to estimate
the trace of $\Pi\cR_{k}^{N}(t)A_{0}^{w}$ for all $k\leq K$ and
$t\in[t_{k}-h^{\epsilon L_{0}},t_{k}+h^{\epsilon L_{0}}]$. Equations
\eqref{eq: Transition RkN} and \eqref{eq: Evolution RkN} show that
for this purpose, it is enough to estimate the traces of $\Pi\cS_{k-1}^{N}A_{0}^{w}$
and $\Pi\cZ_{k}^{N}(t)A_{0}^{w}$ for all $k\leq K$ and $t\in[t_{k}-h^{\epsilon L_{0}},t_{k}+h^{\epsilon L_{0}}]$.
These operators have compactly supported kernels, and to estimate
their traces in view of \eqref{eq: def SkN}, \eqref{eq: SNk symbol}
and \eqref{eq: ZkN} it is then sufficient to have $C^{0}$ estimates
on their Schwartz kernels, hence on the symbols of $\cS_{k-1}^{N}$
and $\cZ_{k}^{N}$. The definition of these operators clearly indicates
that it is necessary to estimate the $C^{\ell}$ norms of the symbols
$a_{j}^{k}(t)$, which is the purpose of the next lemma. 
\begin{lem}
\label{lem: Symbol Derivatives} With the above notations, let $a_{j}^{k}(t,x,\eta)$
be the $j$-th term of the symbol of $U_{k}(t)$, and denote by $\rho\equiv(y,\eta)$
the unique point in $V_{0}\subset T^{*}M$ such that $\pi\Phi^{t}(y,\eta)=x$.
For the principal symbol, the following estimate hold true: 
\[
\|a_{0}^{k}(t)\|_{C^{0}}\leq C\sup_{\rho\in V_{0}}\e^{-\lambda_{t}^{+}(\rho)},\qquad\|a_{0}^{k}(t)\|_{C^{\ell}}\leq C_{\ell}(k+1)^{\ell},\qquad t\in[t_{k}-h^{\epsilon L_{0}},t_{k}+h^{\epsilon L_{0}}].
\]
For the higher order symbols, we have 
\[
\|a_{j}^{k}(t)\|_{C^{\ell}}\leq C_{j,\ell}(k+1)^{\ell+3j},\qquad t\in[t_{k}-h^{\epsilon L_{0}},t_{k}+h^{\epsilon L_{0}}].
\]
The constants $C,C_{\ell},C_{j,\ell}$ depend only on $M$ and $p.$ \end{lem}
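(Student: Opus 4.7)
The plan is to prove all three estimates by induction on $k$, exploiting the explicit iterative construction of the symbols $a_j^k$ from the transport equations together with the inclination lemma estimates of Proposition \ref{prop: NZ 5}.

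First I would establish the $C^0$ bound on $a_0^k$. By \eqref{eq: principal symbol}, we have $a_0^k = i^{\sigma_k}|\det \partial^2_{x\eta}\varphi^k|^{1/2}$. In the dynamically adapted coordinates of Section \ref{sub: Ham Jac}, the relation $y = \partial_\eta\varphi^k(t,x,\eta)$ gives $\partial^2_{x\eta}\varphi^k = \partial y/\partial x$, so this determinant is exactly the Jacobian of the inverse Lagrangian projection $\pi \circ \Phi^t|_{\Lambda_\eta}^{-1}$. Proposition \ref{prop: NZ 5} bounds this Jacobian by $e^{-\lambda_t^+(\rho)}$ for the corresponding $\rho \in V_0$, yielding the stated $C^0$ estimate after taking square roots and absorbing constants.

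Next, for the $C^\ell$ estimates on $a_0^k$, I proceed by induction on $k$ using the transport formula
\[
a_0^k(\tau_{k-1}+t,x) = a_0^k(\tau_{k-1},(\phi_{\tau_{k-1},t}^k)^{-1}(x))\,|\det d_x\phi_{\tau_{k-1},t}^k|^{-1/2},
\]
combined with the transition formula \eqref{eq: Transition Principal} at each junction $\tau_{k-1}$. Each step $\phi_{\tau_{k-1},t}^k$ acts over a small time $\delta t_k \lesssim h^{\epsilon L_0}$, so by the flow estimates \eqref{eq: flow estimates} the map differs from the identity by $O(h^{\epsilon L_0})$ in any $C^\ell$ norm; the Jacobian factor likewise has $C^\ell$ norm bounded uniformly in $k$ thanks to the uniform first-derivative bound $\|dy/dy(t)\|\leq C_{M,p}$ of Proposition \ref{prop: NZ 5}, which holds over the entire time interval $[0,T]$. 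Applying Faà di Bruno to the composition of $k+1$ such near-identity maps and inducting on $k$, the $C^\ell$ norm accumulates additively (not multiplicatively) in $k$, producing the claimed polynomial bound $C_\ell(k+1)^\ell$.

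For the higher-order symbols $a_j^k$, I do a double induction on $(j,k)$ using the transport equation \eqref{eq: Transport Higher} and the initial condition $a_j^k(\tau_{k-1}) = \sum_{\nu=0}^j Z_{j,\nu}^k a_{j-\nu}^{k-1}(\tau_{k-1})$, where $Z_{j,\nu}^k$ is a differential operator of order $2\nu$ with coefficients bounded uniformly in $k$ and the integrated perturbation term involves $q_2^w a_{j-1}^k$ costing two additional derivatives. Combining these yields the recursion
\[
\|a_j^k\|_{C^\ell} \lesssim \|a_j^{k-1}\|_{C^\ell} + \sum_{\nu=1}^{j}\|a_{j-\nu}^{k-1}\|_{C^{\ell+2\nu}}.
\]
Assuming inductively $\|a_{j'}^{k-1}\|_{C^{\ell'}} \leq C_{j',\ell'} k^{\ell'+3j'}$, the dominant inhomogeneous term ($\nu=1$) gives $k^{\ell+2+3(j-1)} = k^{\ell+3j-1}$, and the telescoping sum in $k$ raises the exponent by one, yielding the bound $(k+1)^{\ell+3j}$; the indices $\ell+2\nu+3(j-\nu) = \ell+3j-\nu$ confirm that higher $\nu$ terms are dominated.

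The principal obstacle is establishing \emph{polynomial} rather than exponential growth in $k$ for the $C^\ell$ norms of $a_0^k$. A naive chain-rule argument for composition of $k$ pullback maps would yield constants $C^k$, which translates to an $h^{-C\epsilon}$ blow-up that would be fatal for the trace-class estimates used in Proposition \ref{prop: The big one}. The polynomial control is made possible by the uniform first-derivative bound from Proposition \ref{prop: NZ 5}: this uniformity is specific to the dynamically adapted coordinates in which the horizontal Lagrangian $\Lambda_\eta$ is transverse to the stable foliation, so that its evolution stretches exclusively in unstable directions with stretching encoded in the Jacobian factor and no further loss arising from the parametrization itself.
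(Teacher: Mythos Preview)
Your overall plan matches the paper's: the $C^0$ bound via Proposition~\ref{prop: NZ 5}, and a double induction on $(j,k)$ for the higher-order symbols using the transport/transition formulas. Your recursion $\|a_j^k\|_{C^\ell}\lesssim \|a_j^{k-1}\|_{C^\ell}+\sum_{\nu\ge 1}\|a_{j-\nu}^{k-1}\|_{C^{\ell+2\nu}}$ and its telescoping resolution to $(k+1)^{\ell+3j}$ are essentially the paper's computation.

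However, your justification of the polynomial $C^\ell$ bound for $a_0^k$ is imprecise at the crucial point. You frame it as ``Fa\`a di Bruno for the composition of $k+1$ near-identity maps'', but the near-identity property of each step only gives a one-step differential of size $1+\cO(h^{\epsilon L_0})$, and since $k\sim h^{-\epsilon L_0}\log h^{-1}$ this compounds to $h^{-C}$, not a polynomial in $k$. The mechanism that makes the accumulation additive is different: the paper writes $|a_0^k|=\prod_{i=0}^k f_i$ with $f_i=|\det d_x\phi^i_{\tau_{i-1},\delta t_i}|^{-1/2}$ regarded as a function of the intermediate variable $x_i$ (and $\xi_i$), and differentiates the product directly. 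The Leibniz rule gives $k+1$ terms for one derivative; each term is bounded because (i) each $f_i$ has uniformly bounded derivatives in its \emph{own} variable $x_i$, and (ii) the \emph{long-time} Jacobians $\partial x_i/\partial x_k$ and $\partial\xi_i/\partial\eta$ are uniformly bounded for \emph{all} intermediate $i$, not just consecutive ones. You cite the bound $\|dy/dy(t)\|\le C_{M,p}$ from Proposition~\ref{prop: NZ 5}, which is exactly point~(ii) in the $x$-variables, but you use it to bound the single-step Jacobian factor rather than the intermediate chain-rule factors, and that is where the argument actually bites.

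You also omit the $\eta$-derivatives entirely. The paper handles them by the analogous uniform bound on $\partial\xi_{i+j}/\partial\xi_j$, which comes from the contracting (stable) direction and is argued separately (it is not contained in Proposition~\ref{prop: NZ 5} as stated). Without this, your $C^\ell$ norms in the $\eta$-variable are uncontrolled.
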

\begin{proof}
We consider first the principal symbol. We will use the following
notations: for $0\leq i\leq k$, we write $\delta t_{i}\defeq\tau_{i}-\tau_{i-1}$
with the convention $\tau_{-1}=0$, and set 
\[
(x_{i},\xi_{i})=\Phi^{\delta t_{i}}(x_{i-1},\xi_{i-1})\in V_{i},\quad0\leq i\leq k
\]
again with the convention $(x_{-1},\xi_{-1})=(y,\eta)$. For sake
of notational simplicity we will consider only the case $t=\tau_{k}$
(hence $x=x_{k}$), the modification of the following proof for $t=\tau_{k-1}+t'\neq\tau_{k}$
being immediate. We recall that $|\det d_{x}\phi_{\tau_{i-1},\delta t_{i}}^{i}|^{-\frac{1}{2}}=\lva\det\frac{\d x_{i-1}}{\d x_{i}}\rva^{\frac{1}{2}}$
and hence we can rewrite the absolute value of the principal symbol
as 
\[
\prod_{i=0}^{k}|\det d_{x}\phi_{\tau_{i-1},\delta t_{i}}^{i}|^{-\fr12}=\prod_{i=0}^{k}\lva\det\frac{\d x_{i-1}}{\d x_{i}}\rva^{\fr12}=\lva\det\frac{\d y}{\d x_{k}}\rva^{\frac{1}{2}},
\]
and the $C^{0}$ estimate is a direct consequence of Proposition \ref{prop: NZ 5}.

Before proceeding to the estimates of the derivatives, we start with
a remark concerning the map $\phi_{\tau_{i-1},\delta t_{i}}^{i}:x_{i-1}\to x_{i}$.
For a given value of the parameter $\eta$, this map is invertible
and since $(x_{i},\xi_{i})=\Phi^{\tau_{i}}(y,\eta)$, it also induces
an invertible map $g_{\tau_{i-1},\delta t_{i}}^{i}:\xi_{i-1}\to\xi_{i}$,
which differential $dg_{\tau_{i-1},\delta t_{i}}^{i}$ is uniformly
contracting in the adapted coordinates \cite[p. 190]{NoZw09_1}. Note
however that in our case, the upper bound on the differential of this
map depends on $h,$ and is actually of the form $1-\cO(h^{L_{0}\epsilon})<1$
since out time-step is $\delta t_{i}=\cO(h^{L_{0}\epsilon}).$ As
a result, the Jacobian matrices $\frac{\d\xi_{i+j}}{\d\xi_{j}}$ are
uniformly bounded for $i,j\geq0$: these estimates are similar to
the estimate \eqref{eq: Uniform bounds horiz} and they are obtained
exactly in the same way, using the fact that $\xi$ are the ``stable''
coordinates as in the standard proof of the stable/unstable manifold
theorem for hyperbolic flows \cite[Chapter 6 and 17]{KaHa95}. Let
us sketch here the main argument : on a Poincaré section $\Sigma$
in a small neighborhood of $\Phi^{\delta t}(x,\xi)$, the Poincaré
map of the flow for time $\delta t$ has a differential in the adapted
coordinates of the form 
\[
d\kappa(\rho_{0})|_{\Sigma}=\left(\begin{array}{cc}
A & 0\\
0 & ^{t}A^{-1}
\end{array}\right)+\cO(\eps),\ \eps\ll1.
\]
In these coordinates, we then have 
\[
\delta x=(A+\cO(\eps))\delta x\quad\mbox{and}\quad\delta\xi=(^{t}A^{-1}+\cO(\eps))\delta\eta.
\]
Due to hyperbolicity, there is $\nu>1$ uniform near $\Gamma$ such
that for $\eps$ small enough, $\|A^{-1}+\cO(\eps)\|,\|^{t}A^{-1}+\cO(\eps)\|\leq\nu$
: namely, these matrices are respectively uniformly expanding and
uniformly contracting. Iterating this property from $\delta\xi_{i}$
to $\delta\xi_{i+j},i+j=\cO(\epsilon\log h^{-1})$, it is possible
to control the remainder terms (and here we refer the reader to \cite{NoZw09_1},
Proposition 5.1) and get for instance that 
\[
\delta\xi_{i+j}=\prod_{k=1}^{j}(^{t}A_{k}^{-1}+\cO_{k}(\eps))\delta\xi_{i}=\cO(1)\delta\xi_{i}.
\]
This implies that the Jacobian matrices $\frac{\d\xi_{i+j}}{\d\xi_{j}}$
are uniformly bounded for $i,j\geq0$.

For higher derivatives, let us call for $0\leq i\leq k$ 
\[
D_{i}=D_{i}(V_{0})=\sup_{x_{i}\in\pi\Phi^{\tau_{i}}(V_{0})}|\det d_{x}(\phi_{\tau_{i-1},\delta t_{i}}^{i})|^{-\frac{1}{2}},\quad f_{i}=|\det d_{x}\phi_{\tau_{i-1},\delta t_{i}}^{i}|^{-\fr12},\quad\bD_{i}=\prod_{0\leq j\leq i}D_{j}.
\]
Note that $D_{i}\leq1$ for all $i$ since the map $\phi_{\tau_{i-1},\delta t_{i}}^{i}:x_{i-1}\to x_{i}$
is uniformly expanding. We start with one derivative, and for this
we consider the two cases 
\[
\begin{cases}
\frac{\d a_{0}^{k}(x_{k},\eta)}{\d x_{k}}=\frac{\d f_{k}}{\d x_{k}}\prod_{i\neq k}f_{i}+\frac{\d f_{k-1}}{\d x_{k-1}}\frac{\d x_{k-1}}{\d x_{k}}\prod_{i\neq k-1}f_{i}+\dots+\frac{\d f_{0}}{\d x_{0}}\frac{\d x_{0}}{\d x_{k}}\prod_{i>0}f_{i}\\
\frac{\d a_{0}^{k}(x_{k},\eta)}{\d\eta}=\frac{\d f_{k}}{\d\xi_{k}}\frac{\d\xi_{k}}{\d\eta}\prod_{i\neq k}f_{i}+\frac{\d f_{k-1}}{\d\xi_{k-1}}\frac{\d\xi_{k-1}}{\d\eta}\prod_{i\neq k-1}f_{i}+\dots+\frac{\d f_{0}}{\d\xi_{0}}\prod_{i>0}f_{i}.
\end{cases}
\]
Using the uniform bounds for the Jacobian matrices $\frac{\d x_{k-j}}{\d x_{k}}$,
$\frac{\d\xi_{k-j}}{\d\eta}$ and the fact that $\frac{\d f_{k-j}}{\d x_{k-j}}$,
$\frac{\d f_{k-j}}{\d\xi_{k-j}}$ are uniformly bounded by constants
that depends only on $M$ and $p$, we end up with 
\[
\|a_{0}^{k}\|_{C_{x}^{1}}\leq C_{0,1}(k+1)\quad\mbox{and}\quad\|a_{0}^{k}\|_{C_{\eta}^{1}}\leq C'{}_{0,1}(k+1).
\]
The same procedure exactly can be applied by induction to show the
bounds for higher derivatives using the chain rule (see \cite[pp. 179--182]{NoZw09_1})
: if $\alpha=(\alpha_{x,1},\dots\alpha_{x,n};\alpha_{\eta,1},\dots\alpha_{\eta,n})$
is a multi index of length $|\alpha|$, we obtain that 
\[
\|a_{0}^{k}\|_{C_{x,\eta}^{\alpha}}\leq C_{0,\alpha}(k+1)^{|\alpha|},
\]
and this concludes the proof for the $C^{\ell}$ estimate of the principal
symbol.

For higher order symbols $a_{j}^{k}$, $j\geq1$, we proceed by induction,
on both $j$ and $k$. The principle is here slightly different compared
to \cite{NoZw09_1} since our transitions at times $\tau_{k}$ are
made differently. Assume the estimates have been established for $a_{j}^{k}(\tau_{k})$
and all $0\leq j\leq N-1$, and consider $a_{j}^{k+1}(\tau_{k+1})$.
Let $\ell\in\N^{2n}$ be a multi-index and consider the $C^{\ell}=C_{x,\eta}^{\ell}$
norm of $a_{j}^{k+1}(\tau_{k+1})$. From the transport equation \eqref{eq: Transport Higher},
the transitions at times $\tau_{k}$ and the induction hypothesis
we have 
\begin{align*}
\|a_{j}^{k+1}(\tau_{k+1})\|_{C^{\ell}} & \leq C_{M,p,\ell}D_{k+1}\lp\|a_{j}^{k+1}(\tau_{k})\|_{C^{\ell}}+\|a_{j-1}^{k+1}(\tau_{k})\|_{C^{\ell+2}}\rp\\
 & \leq C_{M,p,\ell}D_{k+1}\lp\sum_{\nu=0}^{j}\|Z_{j,\nu}^{k+1}a_{j-\nu}^{k}(\tau_{k})\|_{C^{\ell}}+\sum_{\nu=0}^{j-1}\|Z_{j-1,\nu}^{k+1}a_{j-1-\nu}^{k}(\tau_{k})\|_{C^{\ell+2}}\rp\\
 & \leq C_{M,p,\ell}D_{k+1}\lp\sum_{\nu=0}^{j}C_{j,\nu}\|a_{j-\nu}^{k}(\tau_{k})\|_{C^{\ell+2\nu}}+\sum_{\nu=0}^{j-1}\ti C_{j,\nu}\|a_{j-1-\nu}^{k}(\tau_{k})\|_{C^{\ell+2+2\nu}}\rp\\
 & \leq C_{M,p,\ell,j}D_{k+1}\lp\bD_{k}((k+1)^{\ell+3j}+(k+1)^{\ell+3j-1})\rp\\
 & \leq C_{M,p,\ell,j}\bD_{k+1}(k+2)^{\ell+3j}\leq C_{M,p,\ell,j}(k+2)^{\ell+3j}
\end{align*}
and this concludes the proof of the lemma. 
\end{proof}
We now come back to the estimate of the trace of $\Pi\cS_{k-1}^{N}A_{0}^{w}$.
Let $\Pi\cS_{k-1}^{N}A_{0}^{w}(\cdot,\cdot)$ denotes its Schwartz
kernel. Since the support of the diagonal embedding $x\mapsto\Pi\cS_{k-1}^{N}A_{0}^{w}(x,x)$
has a volume of order $\cO((\log h^{-1})^{n})$, it follows from the
properties of $\Pi$ and $A_{0}^{w}$ that there exists $C(h)=\cO((-\log h)^{n})$
independent of $k$ such that 
\[
\lva\Tr\Pi\cS_{k-1}^{N}A_{0}^{w}\rva\leq C(h)\|\Pi\cS_{k-1}^{N}A_{0}^{w}(\cdot,\cdot)\|_{C^{0}}.
\]
Now since the symbol of $\cS_{k-1}^{N}$ satisfies \eqref{eq: SNk symbol},
we can estimate the symbols $a_{j}^{k-1}$ thanks to the preceding
lemma, and we see that since $t\leq\epsilon\log h^{-1}$ and $k\leq K=\cO(h^{-\epsilon L_{0}}\log h^{-1})$,
the Schwartz kernel of $\Pi\cS_{k-1}^{N}A_{0}^{w}$ satisfies 
\[
\|\Pi\cS_{k-1}^{N}A_{0}^{w}(\cdot,\cdot)\|_{C^{0}}\leq C_{M,p,N,\epsilon}\frac{h^{N(1-C\epsilon)}}{(2\pi h)^{n}}=C_{M,p,N,\epsilon}\cO(h^{\frac{N}{2}})
\]
if $\epsilon$ and $N^{-1}$ are small enough. In this case, this
implies that $\lva\Tr\Pi\cS_{k-1}^{N}A_{0}^{w}\rva=\cO(h^{\frac{N}{3}})$
for large enough $N$.

The estimate for $\Tr\Pi\cZ_{k}^{N}(t)A_{0}^{w}$ is obtained in the
very same way. The operator $\cZ_{k}^{N}(t)$ is a Fourier integral
operator with a symbol given by $q_{2}^{w}a_{N-1}^{k}(t)$ and we
immediately get that 
\[
\|\Pi\cZ_{k}^{N}(t)A_{0}^{w}(\cdot,\cdot)\|_{C^{0}}\leq C_{M,p,N,\epsilon}h^{N-n-C\epsilon}\|a_{N-1}^{k}(t)\|_{C^{\alpha}}
\]
where $\alpha=\alpha_{M,p}>0$. Using Lemma \ref{lem: Symbol Derivatives},
we conclude that for $\epsilon,N^{-1}$ small enough, we have 
\[
\|\Pi\cZ_{k}^{N}(t)A_{0}^{w}(\cdot,\cdot)\|_{C^{0}}=\cO_{M,p,N,\epsilon}(h^{\frac{N}{2}})
\]
uniformly for $t\in[t_{k}-h^{\epsilon L_{0}},t_{k}+h^{\epsilon L_{0}}]$,
and this again implies that 
\[
|\Tr\Pi\cZ_{k}^{N}(t)A_{0}^{w}|=\cO_{M,p,N,\epsilon}(h^{N/3})
\]
for $\epsilon,N^{-1}$ small enough and $t\in[t_{k}-h^{\epsilon L_{0}},t_{k}+h^{\epsilon L_{0}}]$.

For the final estimate involving $\cR_{k}^{N}(t)$, we use a recursion
based on \eqref{eq: Transition RkN} and \eqref{eq: Evolution RkN}.
Indeed, if we define $\delta_{k}\defeq\tau_{k}-\tau_{k-1}$, then
we have 
\begin{align*}
\cR_{k}^{N}(\tau_{k-1}+t') & =U(t')\cS_{k-1}^{N}+U(t')\lp U(\delta_{k-1})\cR_{k-1}^{N}(\tau_{k-2})+\int_{0}^{\delta_{k-1}}U(\delta_{k-1}-s)\cZ_{k-1}^{N}(s)ds\rp\\
 & +\int_{0}^{t'}U(t'-s)\cZ_{k}^{N}(s)ds.
\end{align*}
We can iterate this formula further down to $k=0$, to obtain that
\begin{align*}
\lva\Tr\Pi\cR_{k}^{N}(\tau_{k-1}+t')A_{0}^{w}\rva & \leq C_{M,p,N,\epsilon}\sum_{\nu=0}^{k}\lva\Tr\Pi\cS_{k}^{N}A_{0}^{w}\rva+\sup_{t\in[0,\delta_{\nu}]}\lva\Tr\Pi\cZ_{\nu}^{N}(t)A_{0}^{w}\rva\\
 & \leq C_{M,p,N,\epsilon}h^{-L_{0}\epsilon}(\log h^{-1})\cO(h^{N/2})=\cO(h^{N/3})
\end{align*}
if $\epsilon,N^{-1}$ are small enough, thanks to the estimates we
have established just above. This concludes the proof of Proposition
\eqref{prop: The big one}.

\subsection{Microlocal partition and computation of the wave trace}

In this section we complete the proof of Proposition \ref{prop: Long Time Trace Formula}
for the test function $\phi^{(1)}$. In order to use the local representations
of $U(t)$ with $h-$oscillatory integral developed in the preceding
section, we will first define a specific cover of the phase space
in a neighborhood of $\cE^{*}M$.

\subsubsection{Microlocalization around periodic orbits, and completion of the cover}

Recall that $\phi^{(1)}$ has support of the form $[t_{0}-\frac{h^{\epsilon J_{+}}}{2},t_{0}+\frac{h^{\epsilon J_{+}}}{2}]\subset[T-1,T]$.
For the proof of Proposition \ref{prop: Long Time Trace Formula},
it is not necessary to control the elements of the length spectrum
in $\supp\phi^{(1)}$, so we simply assume here that \eqref{eq: Relation beta lambda}
holds true with no further precisions. Since $h^{\epsilon J_{+}}<\delta_{0}$
if $h$ is small enough, we are sure from Lemma \ref{lem: Separation Orbits}
(up to enlarge $L_{0}$ a bit) that if $\gamma\in\cE^{*}M$ satisfies
$\ell(\gamma)\in\supp\phi^{(1)}$, then the tubular neighborhood $\Theta(\gamma,h^{\epsilon L_{0}})$
in $\cE^{*}M$ does not contain another periodic orbit with length
in $\supp\phi^{(1)}.$

For such a $\gamma$ with $\ell(\gamma)\in\supp\phi^{(1)}$, take
a sequence of points $(\rho_{i})_{0\leq i\leq N_{\gamma}}$ of $\gamma$,
such that the open balls $B_{\rho_{i}}(h^{\epsilon L_{0}})$ form
a chain of neighborhoods in $T^{*}M$ covering $\gamma$. In particular,
$N_{\gamma}=\cO(h^{-\epsilon L_{0}}\log h^{-1})$. We will denote
by $B_{\gamma}=\cup_{i=1}^{N_{\gamma}}B_{\rho_{i}}(h^{\epsilon L_{0}})$
the open cover of $\gamma$ obtained with these balls.

Consider now the set 
\[
\cE^{*}M_{res}^{\delta/2}\defeq\lp\cE^{*}M^{\delta/2}\cap T^{*}(\supp\Pi)\rp\setminus\bigcup_{\gamma:\ell(\gamma)\in\supp\phi^{(1)}}\bigcup_{i=1}^{N_{\gamma}}B_{\rho_{i}}(h^{\epsilon L_{0}})
\]
and choose an open cover of $\cE^{*}M_{res}^{\delta/2}$ by open balls
of size $h^{\epsilon L_{0}}$. We denote by $(W_{j})_{j\in J}$ the
open sets of this cover, and we can arrange that $\bigcup_{j}W_{j}\subset\cE^{*}M^{\delta}$,
so $(W_{j})_{j\in J}$ and $(B_{\gamma})_{\ell(\gamma)\in\supp\phi^{(1)}}$
form a cover of $\cE^{*}M^{\delta/2}\cap T^{*}(\supp\Pi)$ which stays
included in $\cE^{*}M^{\delta}$. Without loss of generality, we can
also require that if $\rho\in W_{j}$ for some $j\in J$, then then
$d(\rho,\gamma)>\frac{3}{4}h^{\epsilon L_{0}}$: in this way, the
intersection of each $W_{j}$ with periodic orbits in $\cE^{*}M$
of length in $\supp\phi^{(1)}$ is always empty.

We then choose a partition of unity adapted to the full cover of $\cE^{*}M^{\delta/2}\cap T^{*}\supp\Pi$
we have obtained with the sets $(B_{\gamma})_{\ell(\gamma)\in\supp\phi^{(1)}}$
and $(W_{j})_{j\in J}$. In particular, for each orbit $\gamma$ and
$i\leq N_{\gamma}$, we can choose the points $\rho_{i}$ and the
functions $\chi_{\gamma^{i}}\in C_{0}^{\infty}(B_{\rho_{i}}(h^{\epsilon L_{0}}))$
such that near $\gamma$, the functions $\chi_{\gamma^{i}}$ form
(local) partition of unity with 
\begin{equation}
\sum_{i=1}^{N_{\gamma}}\chi_{\gamma^{i}}(\rho)=1\,,\quad\rho\in\Theta(\gamma,\fr12h^{\epsilon L_{0}}).\label{eq: partition unity gamma-1}
\end{equation}

It is clear that if we choose $\epsilon>0$ small enough such that
$L_{0}\epsilon<\fr12$, the $\chi_{\gamma^{i}}$ can be constructed
by rescaling $h-$independent functions by $h^{\epsilon L_{0}}$,
so we can have $\chi_{\gamma^{i}}\in S_{\epsilon L_{0}}^{0,0}(T^{*}M).$
Remark that with the notations of Section \ref{sub: Local-integral-representations},
we can take $V_{0}\equiv B_{\rho_{i}}(h^{\epsilon L_{0}})$ for each
$i$, and find local generating functions for the flow around $\gamma$,
starting in $V_{0}$. Finally, let 
\[
\chi_{\gamma}\defeq\sum_{i=0}^{N_{\gamma}}\chi_{\gamma^{i}}\,.
\]
We denote by $(\ti\chi_{j})_{j\in J}$ the functions in $C_{0}^{\infty}(T^{*}M)$
supported in the $(W_{j})_{j\in J}$ which complete the partition
of unity. Again, $\ti\chi_{j}\in S_{\epsilon L_{0}}^{0,0}(T^{*}M)$.
By quantization, all these functions produce a quantum partition of
unity microlocalized inside $\cE^{*}M^{\delta}\cap T^{*}\supp\Pi$,
and we have: 
\begin{align*}
\Tr\int\Pi f(hQ)U(t)\e^{\frac{\i}{h}t}\phi^{(1)}(t)dt\Pi & =\Tr\int\Pi U(t)\lp\sum_{\gamma:\ell(\gamma)=\ell_{0}}\Op_{h}(\chi_{\gamma})\rp\e^{\frac{\i}{h}t}\phi^{(1)}(t)dt\\
 & +\Tr\int\Pi U(t)\lp\sum_{j\in J}\Op_{h}(\ti\chi_{j})\rp\e^{\frac{\i}{h}t}\phi^{(1)}(t)dt+\cO(h^{\infty}),
\end{align*}
and the same equality holds true with $(1-\chi)U_{0}(t)(1-\chi)$
replacing $U(t)$.

\subsubsection{Stationary phase near periodic orbits }

In this section also, $C$ will here denote a positive constant depending
only on $M,p$ if written without further notifications. Still for
$\gamma\in\scP$ such that $\ell(\gamma)\in\supp\phi^{(1)}$, let
us define 
\begin{align*}
\Tr(\gamma,i) & \defeq\Tr\int\Pi U(t)\oph(\chi_{\gamma^{i}})\Pi\e^{\frac{\i}{h}t}\phi^{(1)}(t)dt\,,
\end{align*}
and 
\[
\Tr_{\chi_{\gamma}}\defeq\sum_{i=1}^{N_{\gamma}}\Tr(\gamma,i).
\]
The preliminary work in the above sections allows to represent $\Tr(\gamma,i)$
by oscillatory integrals. Locally in $t,x,y,\eta$, we have from Proposition
\ref{prop: The big one}: 
\begin{equation}
\Pi U(t)\Op_{h}(\chi_{\gamma^{i}})\Pi=\frac{1}{(2\pi h)^{2n}}\int\e^{\frac{\i}{h}(\varphi(t,x,\eta)-\moy{y,\eta})}\e^{\frac{\i}{h}\moy{y-z,\xi}}a_{h}(t,x,\eta)\chi_{\gamma^{i}}\lp\frac{y+z}{2},\xi\rp dyd\eta d\xi\label{eq: microloc depart}
\end{equation}
since working with local charts, we have $\Pi\equiv1$ on $\supp\chi_{\gamma^{i}}$.
In the sequel to simplify the notations, we will omit the terms $\Pi$
in the computations. 
\begin{lem}
\label{lem: Phase Stat PO}We have 
\begin{align*}
U(t)\oph(\chi_{\gamma}) & =\frac{1}{(2\pi h)^{n}}\int\e^{\frac{\i}{h}(\varphi(t,x,\eta)-\moy{x,\eta})}a_{h}(t,x,\eta)(\chi_{\gamma^{i}}(x,\eta)+h^{\alpha_{1}}r_{\gamma^{i}}(x,\eta))d\eta
\end{align*}
where $r_{\gamma^{i}}\in S_{\epsilon L_{0}}^{0,0}(T^{*}M)$ and $\alpha_{1}=1-C\epsilon>0$
if $\epsilon$ is small enough. \end{lem}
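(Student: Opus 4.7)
The plan is to reduce the double oscillatory integral \eqref{eq: microloc depart} to a single one by applying the method of stationary phase in the fibre variables $(y,\xi)$. Starting from
\[
\Pi U(t)\Op_{h}(\chi_{\gamma^{i}})\Pi=\frac{1}{(2\pi h)^{2n}}\int\e^{\frac{\i}{h}\Phi(t,x,y,z,\eta,\xi)}\,a_{h}(t,x,\eta)\,\chi_{\gamma^{i}}\!\left(\tfrac{y+z}{2},\xi\right)dy\,d\eta\,d\xi,
\]
with phase $\Phi=\varphi(t,x,\eta)-\langle y,\eta\rangle+\langle y-z,\xi\rangle$, I view the $(y,\xi)$ integral as a quadratic oscillatory integral at fixed $(x,z,\eta)$. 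Its critical set is $\{y=z,\ \xi=\eta\}$, non-degenerate with Hessian $\bigl(\begin{smallmatrix}0&I\\I&0\end{smallmatrix}\bigr)$ of signature $0$ and $|\det|=1$.

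The key step is the linear change of variables $u=y-z$, $v=\xi-\eta$, which collapses the phase to $\langle u,v\rangle-\langle z,\eta\rangle$. Applying the exact Kuranishi / Fourier-inversion identity
\[
\frac{1}{(2\pi h)^{n}}\int\e^{\frac{\i}{h}\langle u,v\rangle}g(u,v)\,du\,dv=\Bigl.\exp\bigl(\tfrac{h}{\i}\d_{u}\cdot\d_{v}\bigr)g\Bigr|_{u=v=0},
\]
at $g(u,v)=\chi_{\gamma^{i}}(z+u/2,\eta+v)$, I obtain for the Schwartz kernel of $U(t)\Op_{h}(\chi_{\gamma^{i}})$ the reduced representation
\[
\frac{1}{(2\pi h)^{n}}\int\e^{\frac{\i}{h}(\varphi(t,x,\eta)-\langle z,\eta\rangle)}a_{h}(t,x,\eta)\Bigl(\chi_{\gamma^{i}}(z,\eta)+\sum_{k\ge 1}h^{k}D_{k}\chi_{\gamma^{i}}(z,\eta)\Bigr)d\eta,
\]
where $D_{k}$ is a constant-coefficient differential operator of order $2k$ in $(z,\eta)$. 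Setting $z=x$ on the diagonal (which is what is relevant for the trace computation of the next subsection) then yields the statement.

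What remains is the order count and the symbol-class membership. Since $\chi_{\gamma^{i}}\in S^{0,0}_{L_{0}\epsilon}(T^{*}M)$, each derivative of $\chi_{\gamma^{i}}$ costs a factor $h^{-L_{0}\epsilon}$; hence $h^{k}D_{k}\chi_{\gamma^{i}}=\mathcal O\bigl(h^{k(1-2L_{0}\epsilon)}\bigr)$ in $L^{\infty}$, and more generally in $S^{0,0}_{L_{0}\epsilon}$ by the chain rule. Taking $C=2L_{0}$, the first-order correction already gives the claimed $\alpha_{1}=1-C\epsilon>0$ provided $\epsilon$ is small enough. Truncating the expansion at some finite order $N$ (chosen large enough so that $N(1-2L_{0}\epsilon)>1-C\epsilon$) and absorbing the higher terms into an $\mathcal O(h^{N})$ remainder that sits in $S^{0,0}_{L_{0}\epsilon}$ by standard stationary-phase remainder bounds, I define $r_{\gamma^{i}}$ as the sum of the explicit differential terms and of that remainder; it lies in $S^{0,0}_{L_{0}\epsilon}$, uniformly in $t\in[t_{k}-h^{\epsilon L_{0}},t_{k}+h^{\epsilon L_{0}}]$.

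The main obstacle is not the stationary phase itself (the phase is purely quadratic and the asymptotic formula is exact modulo a finite number of derivatives) but rather the bookkeeping in the $h$-dependent Beals class $S^{0,0}_{L_{0}\epsilon}$: one has to check that the gain $h$ from stationary phase is not wasted by the $h^{-2L_{0}\epsilon}$ cost of the double derivative of $\chi_{\gamma^{i}}$, and that the remainder of the asymptotic expansion, which a priori involves arbitrarily many derivatives of the amplitude, can be handled by choosing $N$ large enough in function of $\epsilon$. This is the place where the smallness condition on $\epsilon$ (already needed to keep $\chi_{\gamma^{i}}$ in a tractable symbol class) is exploited one last time.
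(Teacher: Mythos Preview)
Your proof is correct and follows the same approach as the paper: stationary phase in the $(y,\xi)$ variables applied to \eqref{eq: microloc depart}, with the remainder controlled by the symbol class $S^{0,0}_{\epsilon L_0}$. The paper records this in a single sentence; your explicit change of variables $u=y-z,\ v=\xi-\eta$ reducing the phase to the pure quadratic $\langle u,v\rangle$ and the resulting Kuranishi expansion are exactly the standard mechanism behind that sentence, and your identification $\alpha_1=1-2L_0\epsilon$ is the intended value of $C$.
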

\begin{proof}
This is a straightforward application of the stationary phase expansion
in the $y,\xi$ variables in \eqref{eq: microloc depart}, noting
that $\chi_{\gamma^{i}}\in S_{\epsilon L_{0}}^{0,0}(T^{*}M)$. 
\end{proof}
The content of the next lemma is classical and consists in the stationary
phase expansion of the trace localized around the periodic orbits.
We refer to \cite{CdV73_trI,CdV73_TrII,DuGu75,SjZw02} for additional
details and references. We recall that in our case, we must check
carefully the expansion in $h=\lambda^{-1}$ as we work until times
which are of Ehrenfest type. 
\begin{lem}
\label{lem: Big Stat Phase}Let $\gamma\in\scP$ with $\ell(\gamma)\in\supp\phi^{(1)}$
as above. Then

\[
\Tr_{\chi_{\gamma}}=\e^{\i\frac{\ell(\gamma)}{h}}\frac{\ell^{\sharp}(\gamma)}{\sqrt{|1-P_{\gamma}|}}\phi^{(1)}(\ell(\gamma))+\cO(h^{\alpha_{2}})
\]
where $\ell^{\sharp}(\gamma)$ is the primitive length of $\gamma$
and $\alpha_{2}=1-C\epsilon>0$ if $\epsilon$ is small enough. 
\begin{proof}
For a given term $\Tr(\gamma,i)$, Proposition \ref{prop: The big one},
Lemma \ref{lem: Phase Stat PO}, and the hypothesis on $\chi_{\gamma^{i}}$
and $\supp\phi^{(1)}$ allow to find a single phase function $\varphi$
and a symbol $a_{h,N}=\sum_{j=0}^{N-1}h^{j}a_{j}$ such that we can
compute the trace via the formula 
\begin{align*}
\Tr(\gamma,i) & =\frac{1}{(2\pi h)^{n}}\int\e^{\frac{\i}{h}(\varphi(t,x,\eta)-\moy{x,\eta}+t)}a_{h,N}(t,x,\eta)\chi_{\gamma^{i}}(x,\eta)\phi^{(1)}(t)d\eta dxdt\\
 & +\frac{h^{\alpha_{1}}}{(2\pi h)^{n}}\int\e^{\frac{\i}{h}(\varphi(t,x,\eta)-\moy{x,\eta}+t)}a_{h,N}(t,x,\eta)r_{\gamma^{i}}(x,\eta)\phi^{(1)}(t)d\eta dxdt+\cO(h^{\frac{N}{3}}).
\end{align*}
Here we have used the fact that $\Pi=1$ on $\supp_{x}a_{h}^{N}$.
We would like to evaluate this expression via the stationary phase
in $x,\eta,t$. The critical points satisfy 
\begin{gather}
\d_{\eta}\varphi(t,x,\eta)=x,\nonumber \\
\d_{x}\varphi(t,x,\eta)=\eta,\nonumber \\
1+\d_{t}\varphi(t,x,\eta)=0.\label{eq: Critical points}
\end{gather}
In view of Proposition \ref{prop: Local projection}, this means that
at the critical points $(x_{c},\eta_{c},t_{c})$ we have $\Phi^{t_{c}}(x_{c},\eta_{c})=(x_{c},\eta_{c})$
and $q(x_{c},\eta_{c})=1$, which is precisely the equation defining
a periodic orbit on the unit energy level of length $t_{c}\in\supp\phi^{(1)}$.
Hence these critical points form a closed, non-degenerate submanifold
of $T^{*}M$ of dimension 1. Following \cite{CdV73_TrII,DuGu75} we
will use the clean version of the stationary phase theorem by using
local coordinates transverse to $\gamma$. Denote by $\rho_{i}$ the
central point of $B_{\rho_{i}}(h^{\epsilon L_{0}})=\supp\chi_{\gamma^{i}}$,
and remark first that because of $\chi_{\gamma^{i}}$, the piece of
orbit that form the critical points of the above integral lies in
a small neighborhood of $\rho_{i}$, say 
\[
W_{\chi_{\gamma}}=\{\Phi^{s}(\rho_{0}),\ \ |s|\leq\cO(h^{\epsilon L_{0}})\}.
\]
Choose local coordinates $(x_{1},\dots,x_{n};\xi_{1},\dots,\xi_{n})$
near $\rho_{i}$ such that $H_{p}=\d/\d_{x_{1}}$ at the periodic
point, and denote by $W_{\chi_{\gamma}}^{\perp}$ the set described
by the transversal coordinates $(x_{2},\dots,\xi_{n})$. We can write
\begin{align}
\Tr(\gamma,i) & =\frac{1}{(2\pi h)^{n}}\int\int_{W_{\chi_{\gamma^{i}}}\times W_{\chi_{\gamma^{i}}}^{\perp}}\e^{\frac{\i}{h}\psi(t,x,y,\eta)}a_{h,N}(t,x,\eta)\chi_{\gamma^{i}}(x,\eta)\phi^{(1)}dxdtd\eta\label{eq: Trace Phase stat finale}\\
 & +\frac{h^{\alpha_{1}}}{(2\pi h)^{n}}\int\int_{W_{\chi_{\gamma^{i}}}\times W_{\chi_{\gamma^{i}}}^{\perp}}\e^{\frac{\i}{h}\psi(t,x,y,\eta)}a_{h,N}(t,x,\eta)r_{\gamma}(x,\eta)\phi^{(1)}dtd\eta dx+\cO(h^{\frac{N}{3}})\nonumber 
\end{align}
where $\psi(t,x,y,\eta)\defeq\varphi(t,x,\eta)-\moy{x,\eta}+t$. We
apply the (usual) stationary phase in the variables $(t,\eta_{1},x_{i},\eta_{i})$
for $i>1$. The Hessian of the phase $\psi(t,x,y,\eta)$ is given
by 
\[
\Hess\psi(t,x,x,\eta)=\left(\begin{array}{ccc}
\d_{t}^{2}\varphi & \d_{xt}^{2}\varphi & \d_{\eta t}^{2}\varphi\\
\d_{tx}^{2}\varphi & \d_{x}^{2}\varphi & \d_{\eta x}^{2}\varphi-\Id\\
\d_{t\eta}^{2}\varphi & \d_{x\eta}^{2}\varphi-\Id & \d_{\eta}^{2}\varphi
\end{array}\right).
\]
Using \eqref{eq: Flow Gen Fn} and the Hamilton-Jacobi equation \eqref{eq: ham Jac},
we find that $\Hess\psi$ is block-diagonal in the decomposition $\Span(\d_{t},\d_{\xi_{1}})\oplus\Span_{i>1}(\d_{x_{i}},\d_{\xi_{i}})$,
and 
\begin{equation}
\Hess\psi|_{W_{\chi_{\gamma^{i}}}^{\perp}}=\left(\begin{array}{cccc}
0 & 1 & 0 & 0\\
1 & 0 & 0 & 0\\
0 & 0 & \d_{x}^{2}\varphi & \d_{\eta x}^{2}\varphi-\Id\\
0 & 0 & \d_{x\eta}^{2}\varphi-\Id & \d_{\eta}^{2}\varphi
\end{array}\right).\label{eq: Hessian big}
\end{equation}
A standard computation \cite{SjZw02} shows that 
\[
d\Phi^{t}|_{W_{\chi_{\gamma^{i}}}^{\perp}}=\left(\begin{array}{cc}
(\d_{\eta x}^{2}\varphi^{\perp})^{-1} & -(\d_{\eta x}^{2}\varphi^{\perp})^{-1}\d_{\eta\eta}^{2}\varphi^{\perp}\\
\d_{xx}^{2}\varphi^{\perp}(\d_{\eta x}^{2}\varphi^{\perp})^{-1} & \d_{x\eta}^{2}\varphi^{\perp}-\d_{xx}^{2}(\d_{\eta x}^{2}\varphi^{\perp})^{-1}\d_{\eta\eta}^{2}\varphi^{\perp}
\end{array}\right)_{\d_{x},\d_{\eta}\in TW_{\chi_{\gamma^{i}}}^{\perp}}
\]
where by convenience, we have written $\d_{x\eta}^{2}\varphi^{\perp}$
when $\d_{x},\d_{\eta}\in TW_{\chi_{\gamma^{i}}}^{\perp}$ . Again,
matrices equalities show that 
\begin{equation}
\det(P_{\gamma}-\Id)\defeq\det(d\Phi^{t}|_{W_{\chi_{\gamma^{i}}}^{\perp}}-\Id)=(\det(\d_{\eta x}^{2}\varphi^{\perp})^{-1})\det\left(\begin{array}{cc}
\d_{x}^{2}\varphi^{\perp} & \d_{\eta x}^{2}\varphi^{\perp}-\Id\\
\d_{x\eta}^{2}\varphi^{\perp}-\Id & \d_{\eta}^{2}\varphi^{\perp}
\end{array}\right)\label{eq: Poincare det}
\end{equation}
where $P_{\gamma}$ is the Poincaré map of the orbit $\gamma$. Note
also that in the chosen coordinate system, $\det\d_{\eta x}^{2}\varphi^{\perp}=\det\d_{\eta x}^{2}\varphi$
since $\d_{x_{1}\eta_{1}}^{2}\varphi=1$ and $\d_{x_{1}\eta_{i}}^{2}\varphi=0$
if $i>1$.

Without loss of generality we can assume that $\tau_{N_{\gamma}}<t_{N_{\gamma}}\defeq\ell(\gamma)$,
$t_{N_{\gamma}}$ being the last time in the subdivision chosen to
apply Proposition \ref{prop: The big one}. Recall that 
\[
\sigma_{N_{\gamma}}=\sum_{i=1}^{N_{\gamma}}\frac{1}{2}(\sgn\d_{\eta}^{2}\varphi^{i-1}-\sgn\d_{\eta}^{2}\varphi^{i}).
\]
From Proposition \ref{prop: The big one}, the principal symbol has
the form 
\[
a_{0}=\i^{\sigma_{N_{\gamma}}}|\det\d_{x\eta}^{2}\varphi|^{\frac{1}{2}}=\i^{\sigma_{N_{\gamma}}}|\det\d_{x\eta}^{2}\varphi^{\perp}|^{\fr12}
\]
as noted above. Hence from \eqref{eq: Poincare det} we have 
\[
|\det(\d_{x\eta}^{2}\varphi^{\perp})\det(P_{\gamma}-\Id)|^{\fr12}=|\det\Hess\psi|_{W_{\chi_{\gamma^{i}}}^{\perp}}|^{\fr12}\ \Rightarrow\frac{a_{0}}{|\det\Hess\psi|_{W_{\chi_{\gamma^{i}}}^{\perp}}|^{\fr12}}=\frac{\i^{\sigma_{N_{\gamma}}}}{|\det(P_{\gamma}-\Id)|^{\fr12}}.
\]
Applying the stationary phase principle in the integral \eqref{eq: Trace Phase stat finale},
we will then get an expansion where the leading term coming from $a_{h,N}$
gives precisely $|\det(P_{\gamma}-\Id)|^{-\fr12}$ modulo a phase
factor. From the symbol estimates in Lemma \ref{lem: Symbol Derivatives}
and the property 
\[
\frac{d^{j}}{dt^{j}}\phi^{(1)}=\cO(\beta^{j\epsilon J_{+}})=\cO(h^{-j\epsilon J_{+}}),
\]
the remainder in the stationary phase expansion at first order together
with the higher order symbols $a_{j}$ with $2\leq j\leq N-1$ at
the critical points have a contribution of order $\cO(h^{\alpha})$
with $\alpha=1-C_{M,p,\phi}\epsilon<0$ if $\epsilon$ is small enough,
and finally 
\[
\Tr(\gamma,i)=\int_{W_{\chi_{\gamma^{i}}}}\frac{\i^{\sigma_{N_{\gamma}}}\e^{\frac{\i\pi}{4}\sgn\Hess\psi|_{W_{\chi_{\gamma^{i}}}^{\perp}}}}{|\det(1-P_{\gamma})|^{\fr12}}\phi^{(1)}(\ell(\gamma))dx_{1}+\cO(h^{1-C\epsilon})+\cO(h^{\frac{N}{3}}).
\]
To conclude the proof of the long time trace formula, we need now
to take care of the phase factor in the above expression. 
\begin{lem}
The integer $\mu(\gamma)\defeq\sigma_{N_{\gamma}}+\frac{1}{2}\sgn\Hess\psi|_{W_{\chi_{\gamma^{i}}}^{\perp}}$
is the Maslov index of the orbit $\gamma$, and the hyperbolicity
of the flow implies that $\mu(\gamma)=0.$ 
\begin{proof}
Let us first recall a couple of facts about the Hörmander and Hörmander-Kashiwara
indices \cite{LioVer80_maslov,Dui76_morse}. If $L_{0},L_{1},L_{2}$
is a triple of Lagrangian planes in a symplectic vector space $(S,\omega)$,
consider the quadratic form on $L_{0}\oplus L_{1}\oplus L_{2}$ given
by 
\[
Q(v_{0},v_{1},v_{2})=\omega(v_{0},v_{1})+\omega(v_{1},v_{2})+\omega(v_{2},v_{0}).
\]
The Hörmander-Kashiwara index of this triple is defined by 
\[
\sgn(L_{0},L_{1},L_{2})\defeq\sgn Q.
\]
In the particular case where $L_{1}$ is transversal to $L_{2}$,
the projection $\pi_{12}$ on $L_{2}$ along $L_{1}$ is well defined
and we are reduced to compute the signature of a quadratic form on
$L_{0}$ only \cite{LioVer80_maslov} : 
\begin{equation}
\sgn(L_{0},L_{1},L_{2})=-\sgn\omega(\pi_{12}\cdot,\cdot)|_{L_{0}\times L_{0}}.\label{eq: HorKashi reduced}
\end{equation}
In this transversal situation, one also has the property that $\sgn$
is antisymmetric in its variables. Finally, if $M_{1},M_{2};L_{1},L_{2}$
are Lagrangian planes, the Hörmander index is defined by 
\[
s(M_{1},M_{2};L_{1},L_{2})=\frac{1}{2}(\sgn(M_{1},M_{2},L_{1})-\sgn(M_{1},M_{2},L_{2})).
\]
Now let $V=\{(0,\delta\xi)\}$ be the tangent space to the fiber in
$T^{*}M$ and $H_{i}=\{(\delta x,0)\}$ the horizontal space in the
$i-$th local representation of $U(t)$. These Lagrangian planes are
transversal, and from 
\[
(\d_{\eta}\varphi^{i}(t,x,\eta),\eta)\xrightarrow{\Phi^{t_{i}}}(x,\d_{x}\varphi^{i}(t,x,\eta)),\qquad y=\d_{\eta}\varphi^{i}(t,x,\eta),\ \ \xi=\d_{x}\varphi^{i}(t,x,\eta),
\]
we have $\d_{\eta}^{2}\varphi_{i}:V\to H_{i}$ and its graph is precisely
$(d\Phi^{t_{i}})^{-1}(V)$. Then \eqref{eq: HorKashi reduced} implies
that 
\[
\sgn\d_{\eta}^{2}\varphi_{i}=\sgn(H_{i},V,(d\Phi^{t_{i}})^{-1}(V))
\]
and this yields (see also \cite{DuGu75}) : 
\[
\frac{1}{2}(\sgn\d_{\eta}^{2}\varphi^{i-1}-\sgn\d_{\eta}^{2}\varphi^{i})=s(H_{i-1},H_{i};(d\Phi^{t_{i-1}})^{-1}(V),V).
\]
Consider the product symplectic manifold $(X,\sigma)\defeq(T^{*}M\times T^{*}M,\omega_{1}-\omega_{2})$
where $\omega_{i}$ are the canonical symplectic forms on the factors,
and set 
\begin{equation}
H\times V\defeq(H\oplus\{0\})\times(\{0\}\oplus V)\subset T_{(x,\xi;y,\eta)}^{*}(T^{*}M\times T^{*}M).\label{eq: H x V}
\end{equation}
Since $V$ and $H$ are transversal, it follows that $V$ and $(d\Phi^{t_{i}})^{-1}(V)$
are transversal too, so \cite{Dui76_morse}, Corollary 3.3 says that
if $\Delta$ denotes the diagonal in $T^{*}X$, then 
\begin{align*}
s(H_{i-1},H_{i};(d\Phi^{t_{i-1}})^{-1}(V),V) & =-s(H_{i-1}\times V,H_{i}\times V;\Delta,\Graph(d\Phi^{t_{i-1}}))\\
 & =-s(\Graph(d\Phi^{t_{i-1}}),\Delta;H_{i-1}\times V,H_{i}\times V)\\
 & =\frac{1}{2}\sgn(\Graph(d\Phi^{t_{i-1}}),\Delta,H_{i}\times V)\\
 & -\frac{1}{2}\sgn(\Graph(d\Phi^{t_{i-1}}),\Delta,H_{i-1}\times V)
\end{align*}
where we used that $\Delta$ is always transversal to $H\times V$
and the antisymmetry of $\sgn$ in that case.

In view of our choice \eqref{eq: H x V}, consider now the general
graph $(\Phi^{t}(\rho),\rho)\subset X$ near a base point $\rho=(y,\eta)$,
$t\leq t_{N_{\gamma}}$. We can here apply \eqref{eq: HorKashi reduced}
to compute $\sgn(\Graph(d\Phi^{t}),\Delta,H\times V)$, using for
$\pi_{12}$ the projection on $\Delta$ along $H\times V$. As above,
we can take $(x,\eta)$ as coordinates on the graph of $\Phi^{t}$,
and then: 
\[
\Graph d\Phi^{t}=(\delta x,(\d_{x}^{2}\varphi)\delta x+(\d_{x\eta}^{2}\varphi)\delta\eta;{}^{t}(\d_{x\eta}^{2}\varphi)\delta x+(\d_{\eta}^{2}\varphi)\delta\eta,\delta\eta).
\]
A straightforward computation using \eqref{eq: HorKashi reduced}
with $L_{0}=\Graph(d\Phi^{t})$, $L_{1}=\Delta$ and $L_{2}=H\times V$,
the canonical form $\omega_{1}-\omega_{2}$ on $T^{*}X$ and the decomposition
\[
T^{*}M=\Span(H_{p})\oplus\Span(\d_{t},\d_{\xi_{1}})\oplus\Span_{i>1}(\d_{x_{i}},\d_{\xi_{i}})
\]
yields directly to 
\[
\sgn(\Graph(d\Phi^{t}),\Delta,H\times V)=-\sgn\left(\begin{array}{cc}
\d_{x}^{2}\varphi & \d_{\eta x}^{2}\varphi-\Id\\
\d_{x\eta}^{2}\varphi-\Id & \d_{\eta}^{2}\varphi
\end{array}\right).
\]
If we take $t=t_{N_{\gamma}}$ in the above equation, we obtain from
\eqref{eq: Hessian big} the identity 
\[
\sgn(\Graph(d\Phi^{t_{N_{\gamma}}}),\Delta,H_{N_{\gamma}}\times V)=-\sgn\Hess\psi|_{W_{\chi_{\gamma^{i}}}^{\perp}}.
\]
We can sum up the preceding discussion to get : 
\begin{align}
\mu(\gamma) & =-\frac{1}{2}\sgn(\Graph(d\Phi^{t_{N_{\gamma}}}),\Delta,H_{N_{\gamma}}\times V)\nonumber \\
 & +\frac{1}{2}\sum_{i=1}^{N_{\gamma}}\sgn(\Graph(d\Phi^{t_{i}-1}),\Delta,H_{i}\times V)-\sgn(\Graph(d\Phi^{t_{i}-1}),\Delta,H_{i-1}\times V)\nonumber \\
 & =-\sgn(\Graph(d\Phi^{t_{0}}),\Delta,H_{0}\times V)\nonumber \\
 & -\sum_{i=1}^{N_{\gamma}}\sgn(\Graph(d\Phi^{t_{i}}),\Delta,H_{i}\times V)-\sgn(\Graph(d\Phi^{t_{i-1}}),\Delta,H_{i}\times V)\nonumber \\
 & =\sum_{i=0}^{N_{\gamma}}s(\Delta,H_{i}\times V;\Graph(d\Phi^{t_{i}}),\Graph(d\Phi^{t_{i-1}}))\label{eq: indice de maslov}
\end{align}
where we have set $t_{-1}=0$ and used the fact that 
\[
\sgn(d\Phi^{0},\Delta,H_{0}\times V)=\sgn(\Delta,\Delta,H_{0}\times V)=0.
\]
The integer in the last line of \eqref{eq: indice de maslov} is called
the \emph{Maslov index} of the symplectic curve $t\mapsto\Graph\Phi^{t}$
for $0\leq t\leq t_{N_{\gamma}}$, and this number is actually independant
of the subdivision $t_{0},\dots,t_{N_{\gamma}}$ \cite{Dui76_morse,LioVer80_maslov}.

Consider then in $X$ the reference Lagrangian manifold $\Lambda_{0}=M\times F$
where $M$ is the base manifold in the first factor, and $F$ the
fibre in the second factor. In local coordinates, $\Lambda_{0}=\{(x,\{\xi\},\{y\},\eta)\}$.
The tangent space of $\Lambda_{0}$ at $(x,\xi;y,\eta)$ is a Lagrangian
plane in $T_{(x,\xi;y,\eta)}^{*}X$ which is precisely $H\times V$
in the above notations. Now the mapping 
\begin{equation}
t\mapsto\sgn(\Graph(d\Phi^{t}),\Delta,H\times V)\label{eq: singular mapping}
\end{equation}
is \emph{not} continuous : it jumps by $\pm2$ if $\Graph(d\Phi^{t})$
crosses the singular cycle $\Lambda^{1}(X,\Lambda_{0})$ attached
to $\Lambda_{0}$, in other words, if there is $t$ such that the
projection $\pi_{G}:\Graph d\Phi^{t}\to H\times V$ is singular \cite{Dui96}.
But this never happens for hyperbolic flows, as the vertical fibre
bundle $F$ is transverse to the weak unstable foliation. Indeed,
in the coordinate system adapted to the dynamics introduced above,
the weak unstable manifold $\R H_{p}(\rho)\oplus E_{\rho}^{u}$ is
canonically identified to $H_{t=0}\subset T_{\rho}^{*}M$. Now $\Lambda(0)=(\{y\},\eta)$
is a Lagrangian manifold in $T^{*}M$ which has tangent space precisely
equal to $F$, which is transverse to $H_{t=0}$ and Proposition \ref{prop: NZ 5}
then says that $\Lambda(t)=\Phi^{t}(\Lambda(0))$ is a Lagrangian
manifold in $T^{*}M$ that projects diffeomorphically (locally) onto
the weak unstable manifold $\R H_{p}(\Phi^{t}(\rho))\oplus E_{\Phi^{t}(\rho)}^{u}\equiv H_{t}$
at all times $t>0$. Equivalently, this means that $\Graph\Phi^{t}$
always projects locally diffeomorphically to $M\times F$ for all
times. Hence the mapping \eqref{eq: singular mapping} is constant
: 
\[
\forall i\geq0,\qquad\sgn(\Graph(d\Phi^{t_{i+1}}),\Delta,H_{i}\times V)=\sgn(\Graph(d\Phi^{t_{i}}),\Delta,H_{i}\times V)\,,
\]
and we immediately get that $\mu(\gamma)=0$. 
\end{proof}
\end{lem}
Finally, by the principle of stationary phase at the first order,
we get 
\begin{align*}
\Tr(\gamma,i) & =\int_{W_{\chi_{\gamma^{i}}}}\frac{1}{|\det(1-P_{\gamma})|^{\fr12}}(1+\cO(h^{1-C\epsilon}))\phi^{(1)}(\ell(\gamma))dx_{1}+\cO(h^{\frac{N}{3}})\\
 & =\frac{1}{|1-P_{\gamma}|^{\fr12}}\e^{\frac{\i\ell(\gamma)}{h}}\phi^{(1)}(\ell(\gamma))\int_{\gamma}\chi_{\gamma^{i}}(1+\cO(h^{1-C\epsilon}))ds+\cO(h^{\frac{N}{3}}).
\end{align*}
Summing up over $i$, we are left in view of \eqref{eq: partition unity gamma-1}
with 
\[
\Tr\Pi U(t)\Op_{h}(\chi_{\gamma})\phi^{(1)}(t)\Pi=\frac{\ell^{\sharp}(\gamma)}{|1-P_{\gamma}|^{\fr12}}\e^{\frac{\i\ell(\gamma)}{h}}\phi^{(1)}(\ell(\gamma))+\cO(h^{1-C\epsilon})+\cO(h^{\frac{N}{3}}),
\]
and this concludes the proof of the Lemma for large enough $N$. 
\end{proof}
\end{lem}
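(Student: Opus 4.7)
The plan is to follow the classical Duistermaat--Guillemin stationary phase method around a non-degenerate closed orbit, but with careful tracking of every $h$-dependent factor, since the test function $\phi^{(1)}$, the symbol $a_h$ and the phase $\varphi$ all live in Ehrenfest-scale symbol classes. First I would use Proposition \ref{prop: The big one} together with Lemma \ref{lem: Phase Stat PO} to reduce each piece $\Tr(\gamma,i)$, modulo an $\cO(h^{N/3})$ remainder coming from $\cR_k^N$, to an oscillatory integral
\[
\Tr(\gamma,i)=\frac{1}{(2\pi h)^{n}}\int \e^{\frac{\i}{h}\psi(t,x,\eta)}\,a_{h,N}(t,x,\eta)\,\chi_{\gamma^{i}}(x,\eta)\,\phi^{(1)}(t)\,dx\,d\eta\,dt \;+\; \cO(h^{\alpha_1}) \;+\; \cO(h^{N/3}),
\]
with $\psi(t,x,\eta)=\varphi(t,x,\eta)-\langle x,\eta\rangle+t$, the $\cO(h^{\alpha_1})$ term coming from the subprincipal residue $r_{\gamma^{i}}$ of Lemma \ref{lem: Phase Stat PO}.

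Next I would identify the critical manifold of $\psi$. The system \eqref{eq: Critical points} forces $\Phi^{t}(x,\eta)=(x,\eta)$ and $q(x,\eta)=1$; combined with $\chi_{\gamma^{i}}$ this cuts out exactly a one-dimensional piece of $\gamma$ passing through $B_{\rho_i}(h^{\epsilon L_0})$, which is clean and non-degenerate transversally thanks to hyperbolicity. Choosing local coordinates adapted to the flow with $H_p=\partial_{x_1}$ at the periodic point, the transverse Hessian takes the block form \eqref{eq: Hessian big}, and I invoke the clean stationary phase theorem in the $(2n-1)$ transverse variables $(t,\eta_1,x_2,\dots,\xi_n)$. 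The key algebraic identity is \eqref{eq: Poincare det}, which relates $|\det\Hess\psi|_{\perp}|^{1/2}$ to $|\det \partial_{x\eta}^2\varphi^{\perp}|^{1/2}\cdot|\det(P_\gamma-\Id)|^{1/2}$, so the principal symbol $a_0=\i^{\sigma_{N_\gamma}}|\det \partial_{x\eta}^2\varphi^{\perp}|^{1/2}$ is matched exactly by the Hessian factor and produces the desired $1/\sqrt{|1-P_\gamma|}$ at leading order.

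It then remains to kill the residual phase factor $\i^{\sigma_{N_\gamma}}\e^{\frac{\i\pi}{4}\sgn\Hess\psi|_{\perp}}$, i.e.\ to show that the integer $\mu(\gamma)=\sigma_{N_\gamma}+\tfrac12\sgn\Hess\psi|_{\perp}$ vanishes. I would recognize $\mu(\gamma)$ as a telescoping sum of H\"ormander--Kashiwara triple indices along the subdivision $\tau_0,\dots,\tau_{N_\gamma}$, and rewrite it as the Maslov index of the symplectic curve $t\mapsto\Graph d\Phi^t$ in $T^*M\times T^*M$ relative to the reference Lagrangian $M\times F$ (base times fibre). By Proposition \ref{prop: NZ 5} the horizontal Lagrangian leaves $\Lambda_\eta(t)$ project diffeomorphically onto the weak unstable distribution for all $t\ge 0$, hence $\Graph d\Phi^t$ never crosses the singular cycle attached to $M\times F$; the triple index is constant in $t$, so $\mu(\gamma)=0$.

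The quantitative hard part, and the actual source of the exponent $\alpha_2=1-C\epsilon$, is controlling the remainders of the stationary phase expansion in our Ehrenfest regime. By Lemma \ref{lem: Symbol Derivatives} the symbols $a_j^k$ cost a factor $\cO((\log h^{-1})^{\ell+3j})$ per $\ell$ derivatives, while $\phi^{(1)}$ satisfies $\|\partial_t^N\phi^{(1)}\|_\infty=\cO(h^{-N\epsilon J_+})$ by \eqref{eq: Choice param phi1}. Plugging these bounds into the first-order correction of the stationary phase expansion, and into the $j\ge 1$ terms of the symbol expansion, yields contributions of size $h\cdot h^{-C\epsilon}=h^{1-C\epsilon}$ for a constant $C=C_{M,p,\phi}$ depending on $J_+$, $L_0$ and the number of differentiations used; the remainder $\cO(h^{N/3})$ from Proposition \ref{prop: The big one} is negligible once $N$ is chosen large. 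Finally I sum over $i=1,\dots,N_\gamma$: by \eqref{eq: partition unity gamma-1} the $\chi_{\gamma^i}$ form a partition of unity along the primitive orbit traced by $\gamma$ in phase space, so $\sum_{i} \int_\gamma \chi_{\gamma^{i}}\,ds = \ell^{\sharp}(\gamma)$, while the oscillating factor $\e^{\i \ell(\gamma)/h}$ records the full length $\ell(\gamma)$ via the critical point $t_c=\ell(\gamma)$; this yields the stated asymptotic with $\alpha_2=1-C\epsilon>0$ for $\epsilon$ small enough.
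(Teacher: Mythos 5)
Your proposal follows essentially the same route as the paper's own proof: reduction via Proposition \ref{prop: The big one} and Lemma \ref{lem: Phase Stat PO}, clean stationary phase transverse to the one-dimensional critical orbit with the Hessian/Poincar\'e-map identity \eqref{eq: Poincare det} producing $|\det(1-P_{\gamma})|^{-1/2}$, vanishing of the Maslov factor through the telescoped H\"ormander--Kashiwara indices and the non-crossing of the singular cycle guaranteed by Proposition \ref{prop: NZ 5}, remainder control from Lemma \ref{lem: Symbol Derivatives} and the $\cO(h^{-j\epsilon J_{+}})$ bounds on $\partial_t^j\phi^{(1)}$, and finally the sum over the partition \eqref{eq: partition unity gamma-1} yielding $\ell^{\sharp}(\gamma)$. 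Only trivial slips: the stationary phase is in $2n$ transverse variables (not $2n-1$), and the $C^{\ell}$ symbol bounds grow like $(k+1)^{\ell+3j}$ with $k=\cO(h^{-\epsilon L_{0}}\log h^{-1})$, i.e.\ like $h^{-C\epsilon}$ rather than just powers of $\log h^{-1}$, which is exactly why the exponent is $1-C\epsilon$.
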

Finally, observing that the number of periodic orbit that have length
in $\supp\phi^{(1)}$ can not exceed $\e^{h_{\top}\ell_{0}}=\cO(h^{-C\epsilon})$,
we have 
\begin{align*}
\Tr\int\Pi f(hQ)U(t)\Pi\e^{\frac{\i}{h}t}\phi^{(1)}(t)dt= & \sum_{\gamma:\ell(\gamma)\in\supp\phi^{(1)}}\e^{\i\frac{\ell(\gamma)}{h}}\frac{\ell^{\sharp}(\gamma)}{\sqrt{|1-P_{\gamma}|}}\phi^{(1)}(\ell(\gamma))+\cO(h^{1-C\epsilon})\\
 & +\Tr\int\Pi U(t)\lp\sum_{j\in J}\Op_{h}(\ti\chi_{j})\rp\e^{\frac{\i}{h}t}\phi^{(1)}(t)dt
\end{align*}
and it remains to check that the last term does not contribute to
the wave trace in the limit $h\to0$, as it is expected.

\subsubsection{Remaining contributions to the wave trace}

To complete the proof of Proposition \ref{prop: Long Time Trace Formula},
we indicate briefly how to deal with the terms microlocalized outside
the periodic orbits with length in $\supp\phi^{(1)}$. The operator
$\Pi U(t)\sum_{j\in J}\Op_{h}(\ti\chi_{j})$ is a sum of semiclassical
Fourier integral operators, and because of the crucial fact that the
time involved is always $\leq\epsilon\log h^{-1}$ for sufficiently
small $\epsilon$, they can again be represented by local oscillatory
integrals with a compactly supported Schwartz kernel of the form 
\[
\int\e^{\frac{\i}{h}S_{j}(t,x,y,\eta)}b_{j}(t,x,y,\eta,h)d\eta.
\]
Here $S_{j}$ is a generating function of the canonical relation 
\begin{align*}
C & =\{\lp(t,e),(x,\xi),(y,\eta)\rp;(x,\xi),(y,\eta)\in T^{*}M\setminus0,\\
 & (t,e)\in T^{*}\R\setminus0,\ e+q(x,\xi)=0,\ (x,\xi)=\Phi^{t}(y,\eta)\}
\end{align*}
and $b_{j}\sim\sum h^{k}b_{j,k}$ . Such an integral representation
can for instance be obtained by composing $\cO(t)$ times a Fourier
integral operator quantizing the time 1 symplectic transformation
$\Phi^{1}:T^{*}M\to T^{*}M$ after microlocalizing with $\Op_{h}(\ti\chi_{j})$.
From the transport equations \eqref{eq: Transport Higher} one can
show by applying crudely the chain rule inductively (exactly as in
Lemma \ref{lem: Symbol Derivatives}) that there is a positive constant
$\cC=\cC_{M,p}$ depending only on $M$ and $p$ (via $\Phi^{1}$)
such that 
\begin{equation}
\|b_{j,k}\|_{C^{\ell}}\leq C_{j,k,\ell}(t+1)^{\ell+3k}\cC^{t(\ell+1)}.\label{eq: estimates symbols outside orbits}
\end{equation}
Actually this type of estimate is true even without any hyperbolicity
assumption, which essentially allows to replace the term $\cC^{t(\ell+1)}$
by $\e^{-tC_{M,p}}$ for some $C_{M,p}>0$ using Proposition \ref{prop: NZ 5}.
It follows that we can perform integrations by parts in the integral
\[
\int\e^{\frac{\i}{h}(S_{j}(t,x,x,\eta)+t)}b_{j}(t,x,x,\eta,h)\phi^{(1)}(t)d\eta dxdt
\]
since the critical equations 
\[
\d_{\eta}S_{j}(t,x,x,\eta)=x,\ \d_{x}S_{j}(t,x,x,\eta)+\d_{y}S_{j}(t,x,x,\eta)=0,\ 1+\d_{t}S_{j}(t,x,x,\eta)=0
\]
can not be satisfied, due to the fact that $\supp\ti\chi_{j}$ do
not contain any point belonging to a periodic orbit with unit energy
and length in $\supp\phi^{(1)}$. As a result, we finally have 
\[
\Tr\int\Pi f(hQ)U(t)\lp\sum_{j\in J}\Op_{h}(\ti\chi_{j})\rp\phi^{(1)}(t)dt=\cO(h^{\infty})
\]
or more precisely, $\cO_{N}(h^{N(1-C\epsilon)})$ for some $C>0$
and any $N\in\N$ in view of \eqref{eq: estimates symbols outside orbits}.
Indeed the number of terms in the sum is bounded above by $\cO(h^{-C\epsilon})$,
due to the fact that the support of $\Pi$ has size $\log h^{-1}$
and that the $\ti\chi_{j}$ are localized in balls of size $\cO(h^{\epsilon L_{0}})$.

Finally, the preceding construction applies identically when replacing
$U(t)$ by the truncated free wave group $(1-\chi)U_{0}(1-\chi)$,
and for the same reasons as above we have: 
\[
\Tr\int\Pi f(hQ)(1-\chi)U_{0}(t)(1-\chi)\Pi\e^{\i\frac{t}{h}}\phi^{(1)}(t)dt=\cO(h^{\infty}).
\]
To summarize, we have shown in this section that 
\begin{align*}
\Tr\int\Pi f(hQ)(U(t)-(1-\chi)U_{0}(t)(1-\chi))\Pi\e^{\i\frac{t}{h}}\phi^{(1)}(t)dt & =\sum_{\gamma:\ell(\gamma)=\ell_{0}}\frac{\e^{\i\frac{\ell(\gamma)}{h}}\ell^{\sharp}(\gamma)}{\sqrt{|1-P_{\gamma}|}}\phi^{(1)}(\ell(\gamma))\\
 & +\cO(h^{1-C\epsilon})+\cO(h^{\infty})\,,
\end{align*}
and this concludes the proof of Proposition \ref{prop: Long Time Trace Formula}
for $\phi^{(1)}$.

\subsection{The case of $\phi^{(2)}$}

Even if the principle of the proof of Proposition \ref{prop: Long Time Trace Formula}
is the same with $\phi^{(2)}$, some significant changes must be made
in order to take into account the fact that now $\supp\phi^{(2)}=[T-1,T]$
contains exponentially many periodic orbits as $T\to+\infty$.

Let $\delta_{0}$ be as in Lemma \ref{lem: Separation Orbits}. To
divide the time interval $[T-1,T]$ into subintervals of size $\delta_{0}$,
let $K_{0}=\left\lfloor \delta_{0}^{-1}\right\rfloor +1$ and 
\[
(J_{k})_{0\leq k\leq K_{0}-1}=[T-1+k\delta_{0},T-1+(k+1)\delta_{0}],\quad J_{K_{0}}=[T-1+\left\lfloor \delta_{0}^{-1}\right\rfloor \delta_{0},T].
\]
Now we can find some functions $f_{k}\in C_{0}^{\infty}(\R)$ such
that $f_{k}$ is supported near $J_{k}$ and furthermore they realize
a partition of unity near $\supp\phi^{(2)}$, namely 
\begin{equation}
\forall t\in[T-1,T],\qquad\sum_{k=0}^{K_{0}}f_{k}(t)\phi^{(2)}(t)=\phi^{(2)}(t).\label{eq: Partition time windows}
\end{equation}
Up to shrink $\delta_{0}$ and enlarge $K_{+}$ further, we can assume
without loss of generality if $T$ is large enough that if $\gamma,\gamma'\in\scP$
are such that $\ell(\gamma),\ell(\gamma')\in\supp f_{k}$, then the
neighborhoods $\Theta(\gamma,\e^{-K_{+}T})$ and $\Theta(\gamma',\e^{-K_{+}T})$
are disjoint. Note also that since $T\leq\log\log h^{-1}$, these
neighborhoods are now of size $\cO(-1/\log h)$.

We can then proceed to the proof of the long time trace formula as
in the preceding section, working first with 
\[
\phi_{k}\defeq f_{k}\phi^{(2)}
\]
and then summing up the contributions according to \eqref{eq: Partition time windows},
the point being that by construction, we can isolate microlocally
periodic orbits whose length is in $\supp\phi_{k}$. For each $\gamma$
such that $\ell(\gamma)\in\supp\phi_{k}$, we can define again the
cutoff functions $\chi_{\gamma^{i}}\in C_{0}^{\infty}(B_{\rho_{i}}(\e^{-TK_{+}}))$
forming a partition of unity around $\gamma$ in phase space, and
write: 
\begin{align*}
\Pi f(hQ)U(t)\phi^{(2)}(t)\Pi & =\sum_{k}\Pi U(t)\lp\sum_{\gamma:\ell(\gamma)\in\supp\phi_{k}}\Op_{h}(\chi_{\gamma})+\sum_{j\in J_{k}}\Op_{h}\ti\chi_{j}\rp\phi_{k}(t)\Pi
\end{align*}
where again the $\ti\chi_{j}$ form a partition of unity associated
to a cover of 
\[
\cE^{*}M_{res_{k}}^{\delta/2}\defeq\lp\cE^{*}M^{\delta/2}\cap T^{*}(\supp\Pi)\rp\setminus\bigcup_{\gamma:\ell(\gamma)\in\supp\phi_{k}}\bigcup_{i=1}^{N_{\gamma}}B_{\rho_{i}}(\e^{-TK_{+}})
\]
which stays away in $T^{*}M$ from the orbits with length in $\supp\phi_{k}$.
We can define as well 
\[
\Tr(\gamma,k,i)\defeq\Tr\int\Pi f(hQ)U(t)\oph(\chi_{\gamma^{i}})\Pi\e^{\frac{\i}{h}t}\phi_{k}(t)dt\,,
\]
Since the orbits $\gamma$ entering in $\Tr(\gamma,k,i)$ are microlocally
isolated from each other, we can perform the stationary phase as in
the preceding section to get 
\[
\Tr(\gamma,k,i)=\frac{1}{|1-P_{\gamma}|^{\fr12}}\e^{\frac{\i\ell(\gamma)}{h}}\phi_{k}(\ell(\gamma))\int_{\gamma}\chi_{\gamma^{i}}(1+\cO(h^{1-C\epsilon}))ds
\]
and adding the contributions microlocalized outside the periodic orbits,
this yields again to 
\[
\Tr\int\Pi f(hQ)U(t)\Pi\e^{\frac{\i}{h}t}\phi_{k}(t)dt=\sum_{\gamma:\ell(\gamma)\in\supp\phi_{k}}\e^{\i\frac{\ell(\gamma)}{h}}\frac{\ell^{\sharp}(\gamma)}{\sqrt{|1-P_{\gamma}|}}\phi_{k}(\ell_{\gamma})+\cO(h^{1-C\epsilon})+\cO(h^{\infty}).
\]
Summing up the contributions in $k$ conclude the proof in view of
\eqref{eq: Partition time windows}.

\subsection{Proof of Proposition \ref{prop: Trace Simplifiee} \label{sub: proof trace simple}. }

The arguments of the above sections shows that we have 
\[
\Tr\int\Pi f(hQ)U(t)\Pi\e^{-\i\lambda t}\phi^{(i)}(t)dxdt=\cO(\lambda^{-\infty})
\]
because the critical equations in the stationary phase expansion cannot
be satisfied in this case since $t,\lambda>0$. So 
\begin{align*}
\Re\left(\Tr\int\Pi f(hQ)U(t)\Pi\e^{\i\lambda t}\phi^{(i)}(t)dt\right) & =\Re\left(\Tr\int\Pi f(hQ)U(t)\Pi2\cos(\lambda t)\phi^{(i)}(t)dt\right)\\
 & +\cO(\lambda^{-\infty})\\
 & =2\Tr\int\Pi f(hQ)u(t)\Pi\cos(\lambda t)\phi^{(i)}(t)dt+\cO(\lambda^{-\infty}).
\end{align*}
From the above sections we also know that 
\[
\Tr\int_{\R}\Pi(1-\chi)f(hQ)U_{0}(t)(1-\chi)\Pi\e^{\pm\i\lambda t}\phi^{(i)}(t)dt=\cO(\lambda^{-\infty})
\]
which allows to conclude the proof of the proposition.

\medskip{}

\noun{Acknowledgements. } This work has been partially supported by
the Agence Nationale de la Recherche, under the grant Gerasic-ANR-13-BS01-0007-0.

\end{document}